\crefname{appsec}{Appendix}{Appendices}
\newtheorem{theorem}{Theorem}
\newtheorem{proposition}[theorem]{Proposition}
\newtheorem{lemma}[theorem]{Lemma}
\newtheorem{remark}[theorem]{Remark}
\newtheorem{claim}[theorem]{Claim}
\crefname{lem}{Lemma}{Lemmas}
\crefname{thm}{Theorem}{Theorems}
\theoremstyle{definition}
\newtheorem{definition}[theorem]{Definition}
\crefname{defn}{Definition}{Definitions}
\newcommand{\E}{\mathbb{E}}
\newcommand{\Var}{\mathrm{Var}}
\newcommand{\ip}[2]{\left\langle #1 , #2 \right\rangle}
\newcommand{\poly}{\mathrm{poly}}
\newcommand{\dist}{\mathrm{dist}}
\newcommand{\Ex}{\mathbb{E}}
\newcommand{\eps}{\varepsilon}
\newcommand{\integers}{\mathbb Z}
\newcommand{\bs}{\backslash}
\newcommand{\N}{\mathbb{N}}
\newcommand{\R}{\mathbb{R}}
\newcommand{\V}{\mathbb{V}}
\newcommand{\Edges}{\mathbb{E}}
\newcommand{\T}{\mathbb{T}}
\newcommand{\Z}{\mathbb{Z}}
\newcommand{\D}{\mathcal{E}}
\newcommand{\EE}{\mathcal{E}}
\newcommand{\VM}{\mathrm{VM}}
\newcommand{\EM}{\mathrm{EM}}
\newcommand{\PVM}{\mathrm{PVM}}
\newcommand{\GVM}{\mathrm{GVM}}
\newcommand{\GAP}{\mathsf{gap}}
\newcommand{\Ptb}{P_{\textsc{tb}}}
\newcommand{\allone}{\boldsymbol{1}}
\newcommand{\SWTC}[2]{\boldsymbol{SW}_{#1}^{#2}}
\newcommand{\SWT}{{\boldsymbol{SW}_\mathcal{D}}}
\newcommand{\PSW}{\boldsymbol{SW}}
\newcommand{\PSWE}{{\widetilde P}_{\textsc{sw}}}
\newcommand{\PHB}{P_{\textsc{hb}}}
\newcommand{\PSB}{P_{\textsc{sb}}}
\newcommand{\inner}[3]{\langle #1 , #2 \rangle_{#3}}
\DeclareMathOperator*{\JC}{\Omega_\textsc{joint}}
\DeclareMathOperator*{\JCH}{{\hat \Omega}_\textsc{joint}}
\DeclareMathOperator*{\RC}{\Omega_\mathrm{RC}}
\DeclareMathOperator*{\1}{\mathbbm{1}}
\newcommand{\var}[2]{\mathrm{Var}_{#1}^{#2}}
\DeclareMathOperator{\Ent}{Ent}
\newcommand{\G}{\hat{G}}
\newcommand{\MC}{\mathcal{\hat{M}}}
\newcommand{\p}{\hat{p}}
\newcommand{\df}[2]{\mathcal{E}_{#1}(#2,#2)}
\newcommand{\M}{M}
\def\QMHB{Q_\textsc{mhb}}
\def\PHB{P_\textsc{hb}}
\def\PMHB{P_\textsc{mhb}}
\newcommand{\PSWI}{P^{(i)}_{\textsc{sw}}}
\def\tmix{\tau_{\rm mix}}
\title{The Swendsen-Wang Dynamics on Trees}
\author{
	Antonio Blanca\thanks{Pennsylvania State University.
		Email: ablanca@cse.psu.edu.
		Research supported in part by NSF grant CCF-1850443.}
	\and
	Zongchen Chen\thanks{School of Computer Science, Georgia Institute of Technology, Atlanta, GA 30332.
		Email: \{chenzongchen, vigoda\}@gatech.edu.
		Research supported in part by NSF grant CCF-2007022.}
	\and
	Daniel \v{S}tefankovi\v{c}\thanks{Department of Computer Science, University of Rochester, Rochester, NY 14627.
		Email: stefanko@cs.rochester.edu.
		Research supported in part by NSF grant CCF-2007287.}
	\and
	Eric Vigoda$^\dag$
}
\date{\today}
\begin{document}

\maketitle

\begin{abstract}
The Swendsen-Wang algorithm is a sophisticated, widely-used Markov chain for
sampling from the Gibbs distribution for the ferromagnetic Ising and Potts models.
This chain has proved difficult to analyze,
due in part to the global nature of its updates.
We present optimal bounds on the convergence rate of the Swendsen-Wang algorithm
for the complete $d$-ary tree. Our bounds extend to the non-uniqueness region and apply to all boundary conditions. 

We show 
that the spatial mixing conditions known as \emph{Variance Mixing} and \emph{Entropy Mixing},
introduced 
in the study of local Markov chains by Martinelli et al.\ (2003),
imply $\Omega(1)$
spectral gap and $O(\log{n})$ mixing time, respectively, for the Swendsen-Wang dynamics on the $d$-ary tree.
We also show that these bounds are asymptotically optimal.
As a consequence, we establish $\Theta(\log{n})$ mixing for the Swendsen-Wang dynamics 
for \emph{all} boundary conditions throughout the tree uniqueness region;
in fact, our bounds hold beyond the uniqueness threshold for the Ising model, 
and for the $q$-state Potts model when $q$ is small with respect to $d$. 
Our proofs feature a novel spectral view of the Variance Mixing condition 
inspired by several recent
rapid mixing results on high-dimensional expanders
and utilize recent work on block factorization of entropy under spatial mixing conditions.
\end{abstract}

\section{Introduction}

Spin systems are idealized models of a physical system in equilibrium which are utilized in statistical physics to study phase transitions.
A phase transition occurs when there is a dramatic change in the macroscopic properties
of the system resulting from a small (infinitesimal in the limit) change in one of the parameters
defining the spin system.
The macroscopic properties of the system 
manifest with the persistence (or lack thereof) of long-range influences.
There is a well-established mathematical theory
connecting the absence of these influences to the
fast convergence of Markov chains. In this paper, we study this connection
on the regular tree, known as the \emph{Bethe lattice} in statistical physics~\cite{Bethe,Georgii}.

The most well-studied example of a spin system is the \emph{ferromagnetic $q$-state Potts model},
which contains the {\em Ising model} ($q = 2$) as a special case.
The Potts model is especially important 
as fascinating phase transitions (first-order vs.\ second-order)
are now understood rigorously in various contexts~\cite{BDC,DCST,DGHMT,CET,CDLLPS}.

Given a graph $G=(V,E)$,
configurations of the Potts model are 
assignments of spins~$[q] = \{1,2,\dots,q\}$ to the vertices of $G$.
The parameter $\beta>0$ (corresponding to the inverse of the temperature of
the system) controls the strength of nearest-neighbor interactions,
and the probability of a configuration $\sigma \in [q]^V$ in the \emph{Gibbs distribution} is such that \begin{equation}
\label{eq:Gibbs}
\mu(\sigma)=\mu_G(\sigma) = \frac{e^{-\beta |D(\sigma)|}}{Z}, 
\end{equation}
where $D(\sigma)=\{\{v,w\}\in E: \sigma(v)\neq\sigma(w)\}$ 
denotes the set of bi-chromatic edges in $\sigma$, and $Z$ is the normalizing constant known as the \emph{partition function}.

The {\em Glauber dynamics} is the simplest example of a Markov chain for sampling from the Gibbs distribution;
it updates the spin at a randomly chosen vertex in each step.
In many settings, as we detail below, the Glauber dynamics converges exponentially slow
at low temperatures (large $\beta$) due to the local nature of its transitions and
the long-range correlations in the Gibbs distribution.
Of particular interest are thus ``global''
Markov chains such as the \emph{Swendsen-Wang (SW) dynamics}~\cite{SW,ES},
which update a large fraction of the configuration in each step, thus
potentially overcoming the obstacles that hinder the
performance of the Glauber dynamics, and with steps that can be efficiently parallelized~\cite{AScc}.

The SW dynamics utilizes a close connection between
the Potts model and an alternative representation known as the \emph{random-cluster model}.  The
random-cluster model is defined on subsets of edges and is not a spin system as the
weight of a configuration depends on the global connectivity properties of the corresponding subgraph.
The transitions of the SW dynamics take a spin configuration, transform it to a ``joint'' spin-edge configuration, perform a step in the joint space, and then map
back to a Potts configuration. Formally, from a Potts configuration $\sigma_t\in [q]^V$,
a transition $\sigma_t\rightarrow\sigma_{t+1}$ is defined as follows:

\begin{enumerate}
	\item Let $M_t=M(\sigma_t)=E\setminus D(\sigma_t)$ denote the set of monochromatic edges in $\sigma_t$.
	\item Independently for each edge $e=\{v,w\}\in M_t$, keep $e$ with probability $p=1-\exp(-\beta)$
	and remove $e$ with probability $1-p$.  Let $A_t \subseteq M_t$ denote the resulting subset.
	\item In the subgraph $(V,A_t)$, independently for each connected component $C$ (including isolated vertices),
	choose a spin $s_C$ uniformly at random from $[q]$ and assign to each vertex in $C$ the spin $s_C$.
	This spin assignment defines $\sigma_{t+1}$.
\end{enumerate}

There are two standard measures of the convergence rate of a Markov chain.
The \emph{mixing time} is the number of steps to get within
total variation distance $\leq 1/4$ of its stationary distribution
from the worst starting state.  
The {\em relaxation time} is the inverse of the spectral gap
of the transition matrix of the chain 
and measures the speed of convergence from a ``warm start''.
For approximate counting algorithms the relaxation time 
is quite useful as it corresponds to the ``resample'' time~\cite{Gillman,KLS,JSV,JerrumBook}; see \cref{sec:prelim} for precise definitions 
and how these two notions relate to each other. 

There has been great progress in formally connecting phase transitions with 
the convergence rate of the Glauber dynamics.
Notably, for the $d$-dimensional integer lattice~$\integers^d$, 
a series of works established that a spatial mixing property known as \emph{strong spatial mixing (SSM)} implies
$O(n\log{n})$ mixing time of the Glauber dynamics~\cite{MO,Cesi,DSVW}.
Roughly speaking, SSM says that
correlations decay exponentially fast with the distance 
and is also known to imply optimal mixing and relaxation times of the SW dynamics on~$\integers^d$~\cite{BCSV,BCPSVstoc}.
These techniques utilizing SSM are particular to the lattice and 
do not extend to \emph{non-amenable} graphs (i.e., those whose boundary and volume are of the same order).  
The $d$-ary complete tree, which is the focus of this paper, is 
the prime example of a non-amenable graph.

On the regular $d$-ary tree, there are two fundamental phase transitions: the \emph{uniqueness} threshold $\beta_u$ and the \emph{reconstruction} threshold $\beta_r$. 
The smaller of these thresholds $\beta_u$ corresponds to
the uniqueness/non-uniqueness phase transition of the Gibbs measure on the infinite $d$-ary tree,
and captures whether the worst-case boundary configuration (i.e., a fixed configuration on the leaves of a finite tree)  
has an effect or not on the spin at the root (in the limit as the height of the tree grows).
The second threshold~$\beta_r$ is the reconstruction/non-reconstruction phase transition, marking the divide on 
whether or not a \emph{random} boundary condition (in expectation) affects the spin of the root.

There is a large body of work on the interplay between these phase transitions and the speed of convergence of the Glauber dynamics on the complete $d$-ary tree~\cite{MSW-Potts,MSW04,BKMP},
and more generally on bounded degree graphs~\cite{MS,GM,BGP}.
Our main contributions in this paper concern instead 
the speed of convergence of the SW dynamics on trees, how it is affected by these phase transitions, and the effects of the boundary condition.

Martinelli, Sinclair, and Weitz~\cite{MSW04,MSW-Potts} introduced a pair of spatial mixing (decay of correlation) conditions called 
\emph{Variance Mixing (VM)} and \emph{Entropy Mixing (EM)} 
which
capture the exponential decay of point-to-set correlations.
More formally, 
the VM and EM conditions hold
when there exist constants $\ell>0$ and $\varepsilon = \varepsilon(\ell)$
such that, for every vertex $v\in T$, 
the influence of the spin at $v$ on the spins of the vertices at distance $\ge \ell$ from $v$
in the subtree $T_v$ rooted at $v$ decays by a factor of at least $\eps$.  
For the case of VM, this decay of influence is captured  
in terms of the variance of any function $g$ that depends only on the spins of the vertices in $T_v$ at distance $\ge \ell$ from $v$; specifically, 
when conditioned on the spin at $v$, the conditional variance of $g$ 
is (on average) a factor $\eps$ smaller then the unconditional variance; see
Definition~\ref{def:vm} in Section~\ref{sec:variance} for the formal definition.
EM is defined analogously, with variance replaced by entropy; see Definition~\ref{def:em}.

It was established in~\cite{MSW04,MSW-Potts}
that VM and EM imply optimal bounds on the convergence rate of the Glauber dynamics on trees. We obtain optimal bounds for the speed of convergence of the SW dynamics under the same VM and EM spatial mixing conditions. 

\begin{theorem}
	\label{thm:intro-main}
	For all $q\geq 2$ and $d\geq 3$, for the $q$-state ferromagnetic Ising/Potts model on an $n$-vertex complete $d$-ary tree, Variance Mixing implies that the relaxation time of the Swendsen-Wang dynamics is $\Theta(1)$.
\end{theorem}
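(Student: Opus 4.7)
The plan is to prove the Poincar\'e inequality $\Var_\mu(f) \leq C \cdot \mathcal{E}_{\mathrm{SW}}(f,f)$ with $C = O(1)$, from which the $\Theta(1)$ relaxation time follows by the variational characterization of the spectral gap. The argument proceeds in three stages: a spectral reformulation of VM, an inductive global variance factorization, and a comparison to the SW Dirichlet form.

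\textbf{Spectral reformulation of VM.} First, I would recast VM as a local spectral-gap condition. VM states that for every vertex $v\in T$ and every function $g$ depending only on spins at depth $\geq \ell$ in the subtree $T_v$, one has $\E_\mu[\Var_\mu(g \mid \sigma(v))] \leq (1-\eps)\Var_\mu(g)$. Equivalently, the projection of such ``deep'' functions onto the orthogonal complement of the space of functions of $\sigma(v)$ contracts by $\sqrt{1-\eps}$. This viewpoint interprets VM as a link-level spectral bound, in the spirit of recent high-dimensional expander analyses and consistent with the ``novel spectral view'' advertised in the abstract.

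\textbf{Global variance factorization.} Second, I would iterate the spectral VM bound up the tree. Because the subtrees rooted at the children of a vertex $v$ are conditionally independent given $\sigma(v)$, a depth-by-depth induction combining the local VM bound at $v$ with the inductive hypothesis on each child subtree yields
\[
\Var_\mu(f) \leq C_0 \sum_{B \in \mathcal{B}} \E_\mu\!\left[\Var_\mu(f \mid \sigma_{V \setminus B})\right],
\]
where $\mathcal{B}$ is a partition of $T$ into $O(1)$-sized subtree blocks and $C_0 = O(1)$. The right-hand side is the Dirichlet form of the heat-bath block dynamics on $\mathcal{B}$.

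\textbf{Block-to-SW comparison and main obstacle.} Third, I would bound the block Dirichlet form by $\mathcal{E}_{\mathrm{SW}}$. Exploiting the tree structure, the components of the random edge subset $A_t$ are subtrees; for any block $B \in \mathcal{B}$, the event that all boundary edges of $B$ are absent from $A_t$ has probability bounded below by a constant, uniformly in the starting configuration (bichromatic boundary edges are automatically absent, and each of the constantly many monochromatic boundary edges is removed with probability $1-p > 0$). Running a bounded number of SW steps on this event mixes spins inside $B$ to the correct conditional distribution given the exterior, which should give $\mathcal{E}_{\mathrm{SW}}(f,f) \geq \Omega(1)\,\mathcal{E}_{\mathrm{block}}(f,f)$, completing the proof. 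The principal obstacle is precisely this step: a single SW step does not perform a heat-bath update on a fixed block but on random components determined by $A_t$, so the restriction of SW to a prescribed block is not itself a block heat-bath chain. One must carefully couple SW on the event that $B$ is isolated with the target conditional distribution, paying particular attention to the bichromatic-boundary case where the very structure of the partition is configuration-dependent. As signaled in the abstract, the cleanest route likely bypasses an explicit block-dynamics intermediate and performs the spectral comparison directly in the joint spin-edge space, leveraging the HDX-style reformulation from the first step.
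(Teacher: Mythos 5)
Your first two stages are in the spirit of the paper (the spectral reading of VM, and the use of conditional independence across a level to globalize it), but the argument as structured cannot deliver a $\Theta(1)$ relaxation time, and the step you flag as ``the principal obstacle'' is exactly where the missing idea lives. The quantitative problem is a counting/normalization one: if $\mathcal{B}$ is a partition of $T$ into $\Theta(n)$ blocks of size $O(1)$, then the Dirichlet form of the heat-bath block dynamics on $\mathcal{B}$ is $\frac{1}{|\mathcal{B}|}\sum_{B\in\mathcal{B}}\E_\mu[\Var_\mu(f\mid\sigma_{V\setminus B})]$, not the unnormalized sum appearing on the right-hand side of your factorization; such a chain has spectral gap $O(1/n)$ (it is essentially Glauber), so even a lossless comparison $\mathcal{E}_{\mathrm{SW}}\geq \Omega(1)\,\mathcal{E}_{\mathrm{block}}$ would only yield relaxation time $O(n)$ for SW. To get $\Omega(1)$ gap you would instead need the much stronger statement $\mathcal{E}_{\mathrm{SW}}(f,f)\geq c\sum_{B\in\mathcal{B}}\E_\mu[\Var_\mu(f\mid\sigma_{V\setminus B})]$, i.e.\ SW must be compared against \emph{all} $\Theta(n)$ blocks simultaneously, and your sketch (isolate one prescribed block with constant probability and run a bounded number of SW steps) gives at best a per-block or averaged bound that cannot simply be summed.

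The paper resolves precisely this by changing the block structure rather than the comparison: the constant-size subtrees are grouped into only $\ell+1$ ``tiled'' blocks $T_j^\ell$, each of linear volume but consisting of subtrees at pairwise distance at least $2$. The variance factorization is proved against these $O(1)$ blocks (via the VM$\Leftrightarrow$PVM equivalence and a telescoping over levels, Lemmas~\ref{lem:dirich-bound} and~\ref{lem:var-bound}), so the $1/(\ell+1)$ normalization is harmless and the tiled block dynamics has gap $\Omega(1)$. The block-to-SW comparison (Theorem~\ref{thm:sw-block}) then exploits exactly this geometry: using Ullrich's joint spin-edge decomposition $\PSW=TRT^*$ and $R=Q_kRQ_k$, Cauchy--Schwarz gives $\mathcal{E}_{\PSW}\geq\mathcal{E}_{\SWT}$ for the block-SW chain, and because the conditional measure on a tiled block is a product over its distance-separated constant-size pieces, the restricted SW chain is a product chain whose gap is $\exp(-O(\mathrm{vol}))=\Omega(1)$ by a crude coupling. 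Without this grouping device (or some substitute proving the unnormalized-sum comparison directly), your outline does not close; the configuration-dependent component structure you worry about is handled in the paper exactly by this product-chain argument in the Edwards--Sokal space, not by conditioning on isolation events.
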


\begin{theorem}
	\label{thm:entropy-main}
	For all $q\geq 2$ and $d\geq 3$, for the $q$-state ferromagnetic Ising/Potts model on an $n$-vertex complete $d$-ary tree, Entropy Mixing implies that the mixing time of the Swendsen-Wang dynamics is $O(\log{n})$.
\end{theorem}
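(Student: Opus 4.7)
The plan is to show that Entropy Mixing implies a constant-factor entropy contraction along the Swendsen-Wang dynamics, i.e., that there exists $\delta\in(0,1)$, independent of $n$, with
\[
\Ent_\mu[P_{\textsc{sw}} f]\;\leq\;(1-\delta)\,\Ent_\mu[f]
\]
for every $f\geq 0$. This is the entropy analogue of the variance contraction underlying Theorem~\ref{thm:intro-main}, and it immediately yields the $O(\log n)$ mixing time: iterating the inequality drives the relative entropy of $P_{\textsc{sw}}^{t}\mu_0$ with respect to $\mu$ down from its initial value $\log(1/\mu_{\min})=O(n)$ to $O(1)$ in $O(\log n)$ steps, which translates to constant total-variation distance by Pinsker's inequality.

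To obtain the contraction, I would first convert EM into a \emph{block factorization of entropy} (BFE) on the tree. Let $L$ denote the vertices at depth $\ell$ from the root, where $\ell$ is the constant provided by EM, and let $T_v$ be the subtree rooted at $v\in L$. The targeted inequality has the form
\[
\Ent_\mu[f]\;\leq\;C\,\Ex_\mu\!\left[\sum_{v\in L}\Ent_\mu\bigl[f\,\big|\,\sigma_{V\setminus T_v}\bigr]\right]\;+\;C\,\Ent_\mu\bigl[\Ex_\mu[f\mid \sigma_L]\bigr],
\]
with $C=O(1)$ independent of $n$. The recent results on block factorization of entropy under spatial mixing alluded to in the abstract convert EM-style hypotheses on trees into precisely this type of statement, so this step amounts to packaging the correct recursion over subtrees.

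The second step is to translate BFE into entropy contraction for a single SW step via the joint spin-edge representation. One SW step is the composition of (i) resampling the edge configuration given the current spins and (ii) independently recoloring each connected component. Conditioning on the edge configuration and on the interface $L$ renders the subtree resamplings $\{T_v : v\in L\}$ essentially independent, so the ``bulk'' sum in BFE is absorbed by a single SW step acting within each block $T_v$, while the ``root'' term is absorbed by the component-recoloring step, which freshly updates the color of the cluster containing the root. Combining these two contributions with BFE yields a constant-factor contraction with uniform $\delta$.

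The main obstacle I anticipate is the mismatch between the geometry of BFE, which decomposes entropy along the fixed interface $L$, and the geometry of an SW step, which reshapes this interface each time through random percolation and random recoloring of components that may straddle $L$. Entropy is considerably less forgiving than variance under conditioning, so the comparison driving the spectral-gap argument of Theorem~\ref{thm:intro-main} does not port over directly; the likely remedy is a careful coupling between an SW step and a fictitious heat-bath block dynamics that respects $L$, together with a proof that the entropic cost of this coupling is $O(1)$ and, crucially, does not degrade with the depth $\ell$ of the interface. Controlling this interaction between the SW update and the EM-based factorization is where I expect the bulk of the technical work to lie.
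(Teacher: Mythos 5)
Your overall target is the right one: like the paper (via \cite{BCPSVstoc}), the real mechanism is a constant-rate entropy contraction for a single SW step, which gives $O(\log n)$ mixing since the initial relative entropy is $O(n)$. The genuine gap is in the middle of your argument. The block factorization you propose uses a single cut at constant depth $\ell$ from the root, so the bulk blocks $T_v$ are full subtrees of linear size, and you then assert that the bulk terms are ``absorbed by a single SW step acting within each block $T_v$'' and the root term by the component-recoloring step. Neither claim holds as stated: one SW step restricted to $T_v$ is a single percolation-plus-recoloring move, \emph{not} a heat-bath resampling of $T_v$ given its boundary, so bounding the conditional entropy on $T_v$ by what one SW step dissipates there is precisely the original problem again on a linear-size tree; and the recoloring step refreshes only the random cluster containing the root, not the configuration on $L$ or on the top block. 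Your fallback --- a coupling with a fictitious heat-bath block dynamics at entropic cost $O(1)$ --- is exactly where the difficulty sits, and no such tool is available: entropy admits no analogue of the crude Dirichlet-form comparison, and even the paper's variance comparison (\cref{thm:sw-block}) loses a factor exponential in the block volume, which is tolerable only because its tiled blocks have \emph{constant} size. The paper also points out that the log-Sobolev constant of the SW dynamics is $\Theta(n^{-1})$, so any route through generic functional-inequality comparisons caps out at $O(n\log n)$.

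The paper instead derives from EM (via Lemma 3.5(ii) of \cite{MSW04} and Lemma 3.2 of \cite{CP}) a factorization over the constant-size tiled blocks $T_j^\ell$ (\cref{lem:EM-TBF}), and then upgrades it, using a crude log-Sobolev estimate inside each $B(v,\ell)$ together with \cite{CP}, to the even-odd factorization $\Ent(f)\le C_{\textsc{eo}}\left(\Ex[\Ent_E(f)]+\Ex[\Ent_O(f)]\right)$ (\cref{thm:EM-EOF}). The even-odd structure is the key point your plan is missing: $E$ and $O$ are unions of pairwise nonadjacent single vertices, so each conditional measure is a product over sites, and this is what allows passing, in the joint Edwards--Sokal space, to $\Ent_\nu(g)\le C\left(\nu[\Ent_\nu(g\mid\sigma)]+\nu[\Ent_\nu(g\mid A)]\right)$ and hence to contraction of the composition ``resample edges given spins, then spins given edges,'' i.e.\ one SW step (this computation appears verbatim in the paper's proof of \cref{lem:rc:sw}, following \cite{BCPSVstoc}). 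So the fix is not a coupling argument but redirecting the factorization to conditionally product (even/odd) blocks; with macroscopic subtree blocks there is no route from the factorization to the SW kernel.
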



\noindent
The VM condition is strictly weaker (i.e., easier to satisfy) than the EM condition, but the relaxation time bound in~\Cref{thm:intro-main} is weaker than the mixing time bound in~\Cref{thm:entropy-main}. We also show that the mixing time in~\Cref{thm:entropy-main} is asymptotically the best possible.

\begin{theorem}
	\label{thm:lb:intro}
	For all $q\geq 2$, $d\geq 3$ and any $\beta > 0$,
	the mixing time of the SW dynamics on an 
	$n$-vertex complete $d$-ary tree is $\Omega(\log n)$ for any boundary condition.
\end{theorem}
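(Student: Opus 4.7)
The plan is to apply Wilson's method for lower-bounding the mixing time of reversible Markov chains, using an explicit eigenfunction of the Swendsen--Wang kernel tailored to the tree structure. The key structural fact is that on a tree every edge is a bridge: if a monochromatic edge $e=(u,v)\in M(\sigma_t)$ is \emph{not} retained by the percolation step $A_t\subseteq M(\sigma_t)$, then $u$ and $v$ automatically lie in distinct components of $(V,A_t)$. This rigidity yields a clean one-step recursion for the number of monochromatic edges $\phi(\sigma):=|M(\sigma)|$ from which a genuine eigenfunction can be extracted.

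Concretely, for each edge $e=(u,v)$ the indicator that $e$ is monochromatic in $\sigma_{t+1}$ has, conditional on $\sigma_t$, expectation equal to $p+(1-p)/q$ if $e\in M(\sigma_t)$ (kept with probability $p$, else the endpoints sit in distinct components whose fresh uniform re-colorings agree with probability $1/q$) and $1/q$ if $e\notin M(\sigma_t)$. Assuming for the moment a free boundary, summing over edges gives
\begin{equation*}
\E\bigl[\phi(\sigma_{t+1})\,\big|\,\sigma_t\bigr]\;=\;\lambda\,\phi(\sigma_t)\,+\,\tfrac{|E|}{q},\qquad \lambda\,:=\,p\Bigl(1-\tfrac{1}{q}\Bigr)\in(0,1).
\end{equation*}
Hence $\psi:=\phi-\phi^*$ with $\phi^*=|E|/(q-p(q-1))$ satisfies $P\psi=\lambda\psi$ and $\E_\pi[\psi]=0$.

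To invoke Wilson's bound I need two further estimates. At the monochromatic starting configuration $\sigma_0$ one has $\psi(\sigma_0)^2=\Theta(n^2)$. For the one-step second moment $R:=\sup_\sigma\Var[\psi(\sigma_1)\mid\sigma_0=\sigma]$, I again exploit the tree structure: conditional on $A_t$, the coloring-coincidence indicators $X_e=\one[\sigma_{t+1}(u)=\sigma_{t+1}(v)]$ for edges $e\notin A_t$ are \emph{pairwise} independent, because a tree has no parallel edges between two distinct components, so no two such $X_e,X_f$ can depend on exactly the same unordered pair of component colors. Together with the binomial variance of $|A_t|$ this yields $R=O(n)$. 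Wilson's inequality
\begin{equation*}
\tmix(\tfrac14)\;\geq\;\frac{1}{2\log(1/\lambda)}\;\log\!\left(\frac{\psi(\sigma_0)^2\,(1-\lambda)}{c\,R}\right)
\end{equation*}
then gives $\tmix(\tfrac14)=\Omega(\log n)$, with an implicit constant depending on $\beta,q,d$ through $\lambda$.

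The main obstacle is to extend the argument to \emph{arbitrary} fixed boundary conditions $\tau$. With a non-trivial boundary, components of $(V,A_t)$ that reach the boundary are assigned the prescribed boundary colors rather than a uniform random color, and the recursion for $\phi$ picks up a correction proportional to the probability that both endpoints of an edge are connected to the boundary through the percolated monochromatic sub-graph. I would address this by restricting $\phi$ to edges at graph distance $\geq h/3$ from the leaves, so that the boundary-connection correction contributes only a lower-order term (using either the standard subcritical percolation estimate on the $d$-ary tree when $\beta$ is small, or a direct monotone coupling to the free-boundary chain combined with the observation that ``deep'' edges have boundary-connection probability $o(1)$ in every regime), or alternatively by adding a boundary-aware additive correction to $\psi$ so that the identity $P\tilde\psi=\lambda\tilde\psi$ is preserved up to lower-order error. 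Either route maintains $\tilde\psi(\sigma_0)^2/R=\Omega(n)$, which is the quantitative input producing the $\log n$.
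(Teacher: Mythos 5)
Your free-boundary argument is essentially sound: on a tree every edge is a bridge, so the number of monochromatic edges $\phi(\sigma)=|M(\sigma)|$ does satisfy the exact affine one-step recursion with $\lambda=p(1-\tfrac1q)$, the pairwise-independence observation gives conditional variance $O(n)$, and a second-moment (Wilson-type) comparison of means and variances then yields $\Omega(\log n)$ for the free boundary (one small caution: $\lambda$ is a constant that can be $\le 1/2$, so you should use the version of the argument that only needs a conditional-variance bound and $\lambda$ bounded away from $0$ and $1$, which the variance-decomposition proof of Wilson's inequality indeed provides). This is a genuinely different mechanism from the paper, which does not use eigenfunctions at all.

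The gap is in the extension to arbitrary boundary conditions, which is the actual content of the theorem. Your key claim that edges at distance $\ge h/3$ from the leaves are connected to the boundary through the percolated monochromatic subgraph with probability $o(1)$ ``in every regime'' is false: for large $\beta$ and, say, a monochromatic boundary, $p=1-e^{-\beta}$ is close to $1$, typical configurations are predominantly one color, and the retained monochromatic subgraph is supercritical ($pd>1$), so a deep edge's endpoints attach to the boundary with probability $\Theta(1)$ --- both at stationarity and along the trajectory from your monochromatic start. The boundary correction to the recursion is then of the same order as the main term, so neither $\phi$ nor its restriction to deep edges is an (approximate) eigenfunction in any usable sense; the proposed monotone coupling to the free-boundary chain is unavailable for $q>2$ (SW/Potts is not monotone), and an ``additive correction'' cannot restore the identity because the needed correction is the configuration-dependent probability of connection to the boundary, not a constant. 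The paper circumvents exactly this obstruction by abandoning spectral identities: it initializes blocks $B_i$ (subtrees of height $\tfrac12\log_d n$) conditioned on the events that near-leaf edges $e_i$ are monochromatic, couples the true chain with a chain restricted to the $B_i$'s, shows disagreements propagate downward only geometrically (so they miss the $e_i$ for $\Theta(\log n)$ steps), uses the Hayes--Sinclair complete-monotonicity lemma to show the restricted chain retains its bias on these events, and invokes a decay-of-correlation estimate that holds for all $\beta$ precisely because the test edges sit adjacent to the fixed leaf boundary. Your distinguishing statistic (monochromatic status of near-leaf edges) is close in spirit to theirs, but to handle general boundaries you would need an argument of this conditional/coupling type rather than the eigenfunction route.
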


We remark that the mixing time lower bound in \Cref{thm:lb:intro} applies to all inverse temperatures $\beta$ and all boundary conditions.

The VM
and EM conditions are properties of the Gibbs distribution induced by \emph{a specific} boundary condition on the leaves of the tree; 
this contrasts with other standard notions of decay of correlations such as SSM on $\Z^d$. 
This makes these  conditions quite suitable for understanding the speed of convergence of Markov chains under different boundary conditions.
For instance,~\cite{MSW04,MSW-Potts} established VM and EM for all boundary conditions provided $\beta < \max\{\beta_u,\frac{1}{2}\ln(\frac{\sqrt{d}+1}{\sqrt{d}-1})\}$ 
and for the monochromatic (e.g., all-red) boundary condition for all $\beta$. Consequently, we obtain the following results.

\begin{theorem}
	\label{thm:cor}
	For all $q\geq 2$ and $d\geq 3$, for the $q$-state ferromagnetic Ising/Potts model on an $n$-vertex complete $d$-ary tree,
	the relaxation time of the Swendsen-Wang dynamics is $\Theta(1)$ and
	its mixing time is $\Theta(\log{n})$
	in the following cases:
	\begin{enumerate}
		\item the boundary condition is arbitrary and $\beta<\max\left\{\beta_u,\frac{1}{2}\ln\left(\frac{\sqrt{d}+1}{\sqrt{d}-1}\right)\right\}$;
		\item the boundary condition is monochromatic and $\beta$ is arbitrary.
	\end{enumerate}
\end{theorem}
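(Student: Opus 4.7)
The plan is to assemble \Cref{thm:cor} directly from the three earlier theorems combined with the spatial mixing inputs of Martinelli--Sinclair--Weitz; no new technique is required, and the theorem is essentially a packaging corollary.

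First, I would invoke the results of~\cite{MSW04,MSW-Potts}: both Variance Mixing and Entropy Mixing are established there (i) for every boundary condition whenever $\beta < \max\{\beta_u,\, \tfrac{1}{2}\ln(\tfrac{\sqrt{d}+1}{\sqrt{d}-1})\}$, and (ii) for the monochromatic boundary condition at arbitrary $\beta > 0$. This is the only external input.

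Next, under either (i) or (ii), VM holds, so \Cref{thm:intro-main} yields relaxation time $O(1)$; combined with the trivial bound $\mathrm{gap} \leq 1$ (hence relaxation time $\geq 1$), we obtain $\Theta(1)$. Independently, under either (i) or (ii), EM holds, so \Cref{thm:entropy-main} gives mixing time $O(\log n)$. The matching lower bound $\Omega(\log n)$ is furnished by \Cref{thm:lb:intro}, which is stated with no restriction on $\beta$ or on the boundary, and so covers both regimes. Together these give mixing time $\Theta(\log n)$.

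The only bookkeeping step is to check that the precise VM and EM conditions used in~\cite{MSW04,MSW-Potts} coincide with (or imply) the formulations in Definitions~\ref{def:vm} and~\ref{def:em} on which \Cref{thm:intro-main,thm:entropy-main} rely; this is routine and would be handled by quoting the definitions in the preliminaries. There is no genuine obstacle---the role of this theorem is to spell out concrete parameter regimes in which the three main theorems of the paper apply simultaneously.
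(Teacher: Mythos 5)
Your proposal matches the paper's own argument: the paper also derives \Cref{thm:cor} purely by citing that~\cite{MSW04,MSW-Potts} establish VM and EM in the two stated regimes, then applying \Cref{thm:intro-main} for the $\Theta(1)$ relaxation time, \Cref{thm:entropy-main} for the $O(\log n)$ upper bound, and \Cref{thm:lb:intro} for the matching $\Omega(\log n)$ lower bound. No gaps; this is the same packaging corollary the paper intends.
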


Part (i) of this theorem provides optimal mixing and relaxation times bounds for the SW dynamics under arbitrary boundaries throughout the uniqueness region $\beta < \beta_u$.
In fact, $\beta_u < \frac{1}{2}\ln(\frac{\sqrt{d}+1}{\sqrt{d}-1})$ when $q \le 2(\sqrt{d}+1)$ 
and thus our bound extends to the non-uniqueness region for many combinations of $d$ and $q$.
We note that while the value of the uniqueness threshold $\beta_u$ is known, it does not have a closed form (see~\cite{Haggstrom,BGGSVY}). In contrast,
the reconstruction threshold $\beta_r$ is not known for the Potts model~\cite{SlyRecons,MP}, but
one would expect that part (i) holds for all $\beta < \beta_r$; analogous results are known
for the Glauber dynamics for other spin systems where more precise bounds on the reconstruction
threshold have been established~\cite{BKMP,RSVVY,SlyZhang}.

Previously, only a $\poly(n)$ bound was known for the mixing time of the SW dynamics 
for arbitrary boundary conditions~\cite{Ullrich,BKMP}.
This $\poly(n)$ bound holds for every~$\beta$, but the degree of the polynomial bounding the mixing time is quite large (grows with $\beta$); our bound in part (i) is thus a substantial improvement.


In regards to part (ii) of the theorem, we note that our bound holds for all $\beta$, including the whole low-temperature region. 
The only other case where tight bounds for the SW dynamics are known for the full low-temperature regime is on the geometrically simpler complete graph~\cite{GSV,BSmf}.

Previous (direct) analysis of the speed of convergence of the SW dynamics on trees 
focused exclusively on the special case of the \emph{free} boundary condition~\cite{Huber,CF}, where the dynamics is much simpler as the corresponding 
random-cluster model is trivial (reduces to independent bond percolation); 
this was used by Huber~\cite{Huber} to establish
$O(\log n)$ mixing time of the SW dynamics for all $\beta$ for the special case of the free boundary condition.



We comment briefly on our proof methods next; a more detailed exposition of our approach is provided later in this introduction.
The results in~\cite{MSW04,MSW-Potts} use the VM and EM condition to
deduce optimal bounds for the relaxation and mixing times of the Glauber dynamics; specifically, they analyze its spectral gap and log-Sobolev constant. 
Their methods do not extend to the SW dynamics. 
It can be checked, for example, that the log-Sobolev constant for the SW dynamics is $\Theta({n}^{-1})$, and thus the best possible mixing time one could hope to obtain with such an approach would be $O(n\log n)$.
For \Cref{thm:entropy-main},
we utilize instead new tools introduced by Caputo and Parisi~\cite{CP} to establish a (block) factorization of entropy. This factorization allows to get a handle on the
\emph{modified} log-Sobolev constant for the SW dynamics.
For \Cref{thm:intro-main}, the main novelty in our approach is a new spectral interpretation of the VM condition that facilitates a factorization of variance, similar to the factorization of entropy from~\cite{CP}.
Lastly, the lower bound from \Cref{thm:lb:intro} is obtained by adapting the framework of Hayes and Sinclair~\cite{HS} to the SW setting using recent ideas from~\cite{BCPSVstoc}.

Finally, we mention that
part (ii) of \cref{thm:cor} has interesting implications related to the speed of convergence of random-cluster model Markov chains on trees under the wired boundary condition. That is, all the leaves are connected through external or ``artificial'' wirings.
The case of the wired boundary condition is the most studied version of the random-cluster model on trees 
(see, e.g.,~\cite{Haggstrom,J})
since, as mentioned earlier, the model  is trivial under the free boundary. 
The random-cluster model, which is parameterized by $p \in (0,1)$ and $q>0$ and is formally defined in \cref{sec:rc}, is intimately connected to the ferromagnetic $q$-sate Potts model when $q \ge 2$ is an integer and $p = 1 - \exp(-\beta)$. 
In particular, there is a variant of SW dynamics for the random-cluster model (by observing the edge configuration after the second step of the chain).

Another standard Markov chain for the random-cluster model is the heat-bath (edge) dynamics, which is the analog of the Glauber dynamics on spins for random-cluster configurations.  
Our results for the random-cluster dynamics are the following.

\begin{theorem}
	\label{thm:rc:intro}
	For all integer $q\geq 2$, $p \in (0,1)$, and $d\geq 3$, for the random-cluster model on an $n$-vertex
	complete $d$-ary tree with {\em wired boundary condition},
	the mixing time of the Swendsen-Wang dynamics is $O(\log{n})$.
	In addition, the mixing time of the heat-bath edge dynamics for the random-cluster model is $O(n\log{n})$.
\end{theorem}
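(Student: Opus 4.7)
The plan is to derive both bounds from part (ii) of Theorem~\ref{thm:cor} by exploiting the Edwards--Sokal coupling between the $q$-state Potts model and the random-cluster model. The wired boundary condition on the random-cluster side corresponds to the monochromatic (say all-red) boundary condition on the Potts side: wiring the leaves together is equivalent to forcing them to share a common spin in the joint representation. Consequently, the Potts Gibbs measure with all-red boundary and the wired random-cluster measure are precisely the two marginals of the joint Edwards--Sokal measure, and the SW dynamics for the two models are the two marginal chains of a single joint Markov chain on spin-edge pairs.

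For the SW mixing bound I would invoke the standard fact (going back to Ullrich and used repeatedly in the SW literature) that the spectral gap and mixing time of the two marginal chains of this joint chain agree up to absolute constants and an $O(1)$ additive term. Since Theorem~\ref{thm:cor}(ii) yields $T_{\mathrm{rel}}(\text{SW Potts}) = \Theta(1)$ and $T_{\mathrm{mix}}(\text{SW Potts}) = \Theta(\log n)$ under the monochromatic boundary for every $\beta>0$, the same asymptotic bounds transfer to the SW dynamics for the random-cluster model under the wired boundary, giving the first assertion $T_{\mathrm{mix}}(\text{SW}_{\textsc{rc}}) = O(\log n)$.

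For the heat-bath edge dynamics I would piggyback on the modified log-Sobolev / entropy-factorization machinery developed for Theorem~\ref{thm:entropy-main}. That proof uses the EM condition (which holds for monochromatic boundaries at all $\beta$ by \cite{MSW-Potts}) together with Caputo--Parisi's tree-level tools to obtain a block factorization of entropy for the Potts measure. I would push this factorization through the Edwards--Sokal coupling: lift a function $f$ of the edge configuration to the joint space, apply the spin-side block factorization, and then use the conditional independence of edges given spins together with the tree recursion to rewrite the resulting expression as a sum of single-edge entropies with an $O(1)$ constant. A single-edge factorization of entropy is equivalent to the modified log-Sobolev constant of the heat-bath edge dynamics being $\Omega(1/|E|) = \Omega(1/n)$, and the standard MLSI-to-mixing implication then delivers $T_{\mathrm{mix}}(\text{HB edge}) = O(n\log n)$.

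The main obstacle lies in the second step: converting the spin-side block factorization into a clean single-edge factorization on the random-cluster side. Although edges are conditionally independent given the spins, the spin-side factorization is naturally indexed by subtrees (and hence corresponds, on the edge side, to blocks of edges associated to subtrees, not to single edges). Resolving this requires iterating the tree factorization down to individual edges while controlling the constants, and handling the interplay between the two halves of the SW step (component recoloring and monochromatic-edge percolation) so that no additional logarithmic factor appears in the final constant. This is exactly the setting where the entropy-factorization technology used for Theorem~\ref{thm:entropy-main} is tailored to apply, so I expect the tree recursion to close at each level.
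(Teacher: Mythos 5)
Your argument for the first assertion is correct but takes a genuinely different route from the paper. You reduce the random-cluster SW bound to \cref{thm:cor}(ii) via the two-marginal structure of the Edwards--Sokal coupling: with the boundary spin fixed (equivalently, by color symmetry, with the wired component recolored uniformly), the Potts SW chain and the random-cluster SW chain are $P^{\uparrow}P^{\downarrow}$ and $P^{\downarrow}P^{\uparrow}$ for the same pair of conditional kernels, so their nonzero spectra coincide, and the intertwining $P_{\textsc{rc-sw}}^{t+1}=P^{\downarrow}P_{\textsc{sw}}^{t}P^{\uparrow}$ together with contraction of total variation under stochastic maps gives $\tmix(\text{RC-SW, wired})\le \tmix(\text{Potts-SW, monochromatic})+1=O(\log n)$. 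The paper does not argue this way: it proves entropy contraction for the edge-marginal chain directly, by first upgrading the even-odd spin factorization (\cref{thm:EM-EOF}) to a factorization of entropy in the joint spin-edge space using \cite{BCPSVnew}, and then showing $\Ent_\nu(KQg)\le(1-\delta)\Ent_\nu(g)$. Your route is lighter and obtains the SW statement as a black-box corollary of \cref{thm:cor}(ii); the paper's route is heavier but produces the joint-space factorization that it needs anyway for the edge dynamics.

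For the second assertion there is a genuine gap, exactly at the step you flag. What is needed is an inequality, for functions $\tilde g$ depending only on the edges, of the form $\Ent_{\pi}(\tilde g)\le C\sum_{e}\nu\left[\Ent_\nu(g\mid\sigma,A(E\setminus e))\right]$, followed by a way to relate these joint-space single-edge entropies to the Dirichlet form of the heat-bath edge dynamics, a chain that never observes the spins. Conditional independence of the edges given $\sigma$ tensorizes entropy \emph{conditionally on the spins}, but it does not by itself control $\Ent_\pi(\tilde g)$, whose conditioning structure lives on the edge marginal; and ``iterating the tree factorization down to single edges'' addresses the subtree structure rather than the real difficulty, which is passing between the edge marginal and the joint measure. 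The paper fills both holes with imported ingredients: the spin-to-joint single-site/single-edge entropy factorization of \cite{BCPSVnew} (their inequality (5.9), which takes the even-odd spin factorization as input), and a comparison of the heat-bath edge dynamics with Ullrich's single-bond dynamics \cite{Ullrich}, whose Dirichlet form is a sum of single-edge conditional variances in the joint space, combined with a Bernoulli entropy--variance comparison from \cite{DSC}. This yields a log-Sobolev inequality with constant $\Omega(1/n)$ and hence $O(n\log n)$ mixing. Your proposal names the correct objects and the correct endpoint, but the passage from the spin-side block factorization to a single-edge inequality usable for the edge dynamics is asserted as a hope rather than proved, and that is precisely the technical core of the paper's \cref{lem:rc:glauber}.
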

To prove these results, we use a factorization of entropy in the joint spin-edge space,
as introduced in~\cite{BCPSVstoc};
they cannot be deduced from the mixing time bounds for the Glauber dynamics for the Potts model in~\cite{MSW04,MSW-Potts}. 

Our final result shows that while random-cluster dynamics mix quickly under the wired boundary condition, there are random-cluster boundary conditions that cause an exponential slowdown for both the SW dynamics and the heat-bath edge dynamics for the random-cluster model. 

\begin{theorem}
	\label{thm:rc:lb}
	For all $q\geq 2$, all $d\geq 3$,
	consider the random-cluster model on an $n$-vertex 
	complete $d$-ary tree. Then,
	there exists $p \in (0,1)$ and a random-cluster boundary condition
	such that the mixing times of the Swendsen-Wang dynamics 
	and of the heat-bath edge dynamics is $\exp(\Omega(\sqrt{n}))$.
\end{theorem}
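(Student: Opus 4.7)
The plan is to establish the exponential lower bound via a bottleneck (conductance) argument for both chains, by constructing a random-cluster boundary condition $\pi$ and a value $p\in(0,1)$ so that the resulting Gibbs measure on $T$ admits two macroscopic phases whose interface has mass $\exp(-\Omega(\sqrt n))$ relative to either phase. Once such a bottleneck is in place, the standard Cheeger-type inequality immediately gives $\tmix=\exp(\Omega(\sqrt{n}))$ for any reversible chain whose one-step transitions cannot coherently transport the configuration across the bottleneck.

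The central task is the design of $\pi$. The complete $d$-ary tree contains $\Theta(\sqrt n)$ disjoint subtrees of size $\Theta(\sqrt n)$ hanging off the vertices at depth $h\approx\tfrac12\log_d n$, and each such subtree has $\Theta(\sqrt n)$ leaves. The idea is to use wirings among these leaves to emulate, within the random-cluster model on $T$, a two-dimensional ferromagnet of ``side'' $L=\Theta(\sqrt n)$ at a point of phase coexistence. Concretely, I would label $O(1)$ leaves of each subtree as ``ports'' and let each part of $\pi$ consist of one pair of ports drawn from two different subtrees, chosen so that the contracted graph obtained from $T$ by identifying each pair contains a planar $L\times L$ grid-like subgraph. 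With $p$ tuned to place this emulated piece in the low-temperature regime of the two-dimensional random-cluster model, classical Peierls-type estimates then yield two macroscopic phases of constant mass separated by interfaces of length $\Omega(L)$: letting $S$ be the set of edge configurations in the ``ordered'' phase, this produces $\mu(S),\mu(S^c)\ge 1/3$ and $\mu(\partial S)/\mu(S)\le \exp(-c\sqrt n)$ for the appropriate definition of $\partial S$.

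It remains to bound the conductance for both chains. The heat-bath edge dynamics is immediate, since a single-edge update changes the macroscopic observable by $O(1)$, forcing any direct transition from $S$ to $S^c$ through $\partial S$. For the Swendsen-Wang dynamics, which can reconfigure many edges at once, I would exploit the independence of the uniform spin assignment on each connected component of the sampled edge subgraph: a union bound over Peierls interfaces of length $\Omega(\sqrt n)$ combined with a Chernoff estimate on the component-level spin draws shows that the probability of coherently crossing the interface in one step is $\exp(-\Omega(\sqrt n))$.

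The main obstacle is precisely this last bound for Swendsen-Wang. Because SW updates are non-local, it is a priori plausible that the Peierls interface could be ``short-circuited'' in one step by a carefully aligned global edge reconfiguration; ruling this out requires combining the interface enumeration with the independence structure of the SW spin assignment, and checking that the emulated planar structure genuinely inherits a surface tension of order $\sqrt n$ from the two-dimensional random-cluster model after the distortions caused by the tree edges of $T$. Adapting the SW one-step analysis to this composite graph, rather than an actual planar lattice, is the most delicate point and is where the concrete choice of ports and pairing in $\pi$ enters essentially.
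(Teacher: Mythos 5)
Your route is genuinely different from the paper's, and as it stands it has real gaps. The paper does not try to emulate a two-dimensional system at phase coexistence: it embeds an \emph{arbitrary} graph $G$ whose edge dynamics is already known to be torpid, by replacing each edge of $G$ with a length-two path realized by two actual tree edges (a ``cherry'' $a_i\hbox{--}c_i\hbox{--}b_i$ at the bottom of a small subtree) and wiring leaves accordingly; \cref{lemma:mg} computes the induced effective coupling $p=\hat p^2/(\hat p^2+2\hat p(1-\hat p)+(1-\hat p)^2q)$, \cref{thm:lb:general} lifts a bad-conductance set of $G$ to the tree while controlling the ``distortion'' connections through the bulk (\cref{claim:tail}), and the final choice is $G=K_{\Theta(\sqrt n)}$ at $p=\Theta(n^{-1/2})$, so $\hat p=\Theta(n^{-1/4})$. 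Crucially, the Swendsen--Wang statement is then obtained by Ullrich's comparison between SW and single-bond/edge dynamics, which transfers an exponentially small spectral gap at only polynomial cost; no one-step analysis of SW is ever needed.

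The two substantive gaps in your plan are exactly at the points you flag but do not resolve. First, the direct one-step conductance bound for SW (``union bound over Peierls interfaces plus a Chernoff estimate on component spin draws'') is the hard core of results such as Borgs--Chayes--Tetali on $\mathbb{Z}^2$ and Gheissari--Lubetzky--Peres in mean field; it is not a routine estimate, and redoing it on a distorted emulated lattice inside the tree is a major undertaking that your sketch does not carry out, whereas the comparison route avoids it entirely. Second, the 2D emulation is quantitatively problematic: a boundary condition can only identify leaves, so every grid interaction must be realized by tree edges near the leaves, and placing the emulated grid at its coexistence point forces the effective coupling, hence $\hat p$, to be a constant bounded away from $0$. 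Then each gadget's port connects through the bulk of its subtree with probability $\hat p^{\Omega(\ell)}$, and with the $\Theta(n)$ gadget edges needed for an $L\times L$ grid with $L=\Theta(\sqrt n)$ the expected number of spurious inter-class connections is $\Theta(n\hat p^{\Omega(\ell)})$; to keep this below the surface-tension scale $\sqrt n$ (each distortion costs a factor $q^{O(1)}$ in the measure comparison) you need $\ell=\Omega(\log n)$, but then there are only $n/2^{\Omega(\log n)}=n^{1-\Omega(1)}$ disjoint host subtrees, too few for the $\Theta(n)$ grid edges. The paper's choice of the mean-field graph in its slow window sidesteps this tension precisely because there $\hat p\to 0$, so constant-height gadgets suffice and \cref{claim:tail} makes the distortions negligible; an analogous accounting is missing from, and appears fatal to, the grid-based construction as proposed.
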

We prove this result extending ideas from~\cite{BGV}.
In particular, we prove a general theorem (see Theorem~\ref{thm:lb:general})
that allows us to transfer slow mixing results for the edge dynamics on other graphs to the tree, for a carefully constructed tree boundary condition and a suitable $p$. Theorem~\ref{thm:rc:lb} then follows from any of the known slow mixing results for the edge dynamics~\cite{GLP,GSVY,Ullrich}.
The proof of Theorem~\ref{thm:lb:general} uses the random-cluster boundary condition to embed an arbitrary graph $G$ on the tree; a set with bad conductance for the chain on $G$ is then lifted to the tree.

\medskip\noindent\textbf{Our techniques.} \ Our first technical contribution 
is a reinterpretation and generalization of the VM condition as a bound on the second eigenvalue of a certain stochastic matrix which we denote by ${P^\uparrow}{P^\downarrow}$.
The matrices ${P^\uparrow}$ and ${P^\downarrow}$ are distributional matrices corresponding to the distribution at a vertex $v$ given the spin configuration of the set $S_v$ of all its descendants at distance at least $\ell$ and vice versa.  
These matrices are inspired by the recent results in~\cite{AL,ALO} utilizing high-dimensional expanders; see \cref{sec:variance} for their precise definitions.

Our new spectral interpretation of the VM condition allows us to factorize it and obtain an equivalent global variant we call \emph{Parallel Variance Mixing (PVM)}.  
%
While the VM condition
signifies 
the exponential decay with distance of the correlations between a vertex $v$ and the set $S_v$ (and is well-suited for the analysis of local Markov chains), 
the PVM condition captures instead the decay rate of set-to-set correlations between the set of all the vertices at a fixed level of the tree and the set of all their descendants at distance at least $\ell$.
The PVM condition facilitates the analysis of a block dynamics with a constant number of blocks each of linear volume. We call this variant of block dynamics the \textit{tiled block dynamics} as each block consists of a maximal number of non-intersecting subtrees of constant size (i.e., a \emph{tiling}); see \cref{fig:tiling}. We use the PVM condition to show that the spectral gap of the tiled block dynamics is $\Omega(1)$, and
a generic comparison between the block dynamics and the SW dynamics yields \cref{thm:intro-main}. 

Our proof of~\Cref{thm:entropy-main} follows a similar strategy.
We first obtain a global variant of the EM condition, analogous to the PVM condition but for entropy. For this, we use a recent result of Caputo and Parisi~\cite{CP}.
From this global variant of the EM condition 
we deduce a factorization of entropy into the even and odd subsets of vertices.
(The parity of a vertex is that of its distance to the leaves of the tree.) 
The even-odd factorization of entropy was recently shown in~\cite{BCPSVstoc} to imply $O(\log n)$ mixing of the SW on general biparte graphs.

\medskip\noindent\textbf{Paper organization.}
The rest of the paper is organized as follows. 
\cref{sec:prelim} contains some standard definitions and facts we use in our proofs. 
In \Cref{sec:variance,sec:ent-proof} we prove~\Cref{thm:intro-main,thm:entropy-main}, respectively. 
Our general comparison result between the SW dynamics and the block dynamics is provided in \cref{sec:sw}. 
Our results for the random-cluster model dynamics (\cref{thm:rc:intro,thm:rc:lb}) are given in \cref{sec:rc,sec:rc:slow}, respectively, and our lower bound for the SW dynammics (\cref{thm:lb:intro}) is
proved in~\Cref{sec:lower-bound}.

\section{Preliminaries}
\label{sec:prelim}

We introduce some notations and facts that are used in the remainder of the paper. 

\medskip\noindent\textbf{The Potts model on the $d$-ary tree.} \ For $d\ge 2$, let $\T^d = (\V,\Edges)$ denote the rooted infinite $d$-ary tree in which every vertex (including the root) has exactly $d$ children. 
We consider the complete finite subtree of $\T^d$ of height $h$, which we denote by $T = T^d_h = (V(T),E(T))$. 
We use $\partial T$ to denote the external boundary of $T$; i.e., the set of
vertices in $\V \setminus V(T)$ incident to the leaves of $T$.
We identify subgraphs of $T$ with their vertex sets. In particular, for $A \subseteq V(T)$
we use $E(A)$ for the edges with both endpoints in $A$, $\partial A$ for the external boundary of $A$ (i.e., the vertices in $(T \cup \partial T) \setminus A$ adjacent to $A$), and, with a slight abuse of notation, we write $A$ also for the induced subgraph $(A,E(A))$.
When clear from context, we simply use $T$ for the vertex set $V(T)$.

A configuration of the Potts model is an assignment of spins $[q] = \{1,\dots,q\}$ to the vertices of the graph.
For a fixed spin configuration $\tau$ on the infinite tree $\T^d$, 
we use $\Omega^\tau = [q]^{T \cup \partial T}$ to denote the set of
configurations of $T$ that agree with $\tau$ on $\partial T$.
Hence, $\tau$ specifies a boundary condition for $T$.
More generally, for any $A \subseteq T$ and any $\eta \in \Omega^\tau$, let $\Omega_A^\eta \subseteq \Omega^\tau$ denote the set of configurations of $T$ that agree with $\eta$ on $(T \cup \partial T) \setminus A$.
We use $\mu_A^\eta$ to denote the Gibbs distribution over $\Omega_A^\eta$,
so for $\sigma \in \Omega_A^\eta$ we have
$$
\mu_{A}^\eta (\sigma) := \frac{1}{Z} \exp\Big(-\beta  \sum\nolimits_{\{u,v\} \in E(A \cup \partial A)} \1(\sigma_u \neq \sigma_v)  \Big),
$$
where $Z$ is a normalizing constant (or partition function). 
For $\sigma \notin \Omega_A^\eta$, we set $\mu_{A}^\eta (\sigma) = 0$. 


\medskip\noindent\textbf{The tiled block dynamics.} \ Let $\mathcal U = \{U_1, . . . , U_r\}$ be a collection of subsets (or blocks) such that $T = \bigcup_i U_i$. The (heat-bath) block dynamics with blocks $\mathcal U$
is a standard Markov chain for the Gibbs distribution $\mu_T^\tau$.
If the configuration at time $t$ is $\sigma_t$, the next configuration $\sigma_{t+1}$ is generated as follows:

\vspace{-1mm}
\begin{enumerate}
	\item Pick an integer $j \in \{1,2,\dots,r\}$ uniformly at random;
	\item Draw a sample $\sigma_{t+1}$ from the conditional Gibbs distribution $\mu_{U_j}^{\sigma_t}$; that is,  
	update the configuration in $U_j$ with a new configuration distributed according to the conditional measure in $U_j$ given the configuration of $\sigma_t$ on $(T\cup\partial T) \setminus U_j$ and the boundary condition~$\tau$.
\end{enumerate}


\noindent
We consider a special choice of blocks, where each block is a disjoint union of small subtrees of constant height forming a tiling structure. 
For $0\le i\le h+1$, let $L_i$ denote the set of vertices of $T$ that are of distance exactly $i$ from the boundary $\partial T$; in particular, $L_0 = \emptyset$ and $L_{h+1}$ contains only the root of $T$. 
(It will be helpful to define $L_i = \emptyset$ for $i<0$ or $i>h+1$.)
Let $F_i = \cup_{j \le i} L_j$ be the set of vertices at distance at most $i$ from $\partial T$; 
then $F_0 = \emptyset$ and $F_{h+1} = T$.
We further define $F_i = \emptyset$ for $i<0$ and $F_i = T$ for $i>h+1$. 
For each $i \in \N^+$ let
\begin{equation}
\label{eq:bi}
B_i^\ell = F_i \bs F_{i-\ell} = \bigcup_{i-\ell < j \le i} L_j.
\end{equation}
In words, $B_i^\ell$ is the collection of all the subtrees of $T$ of height $\ell-1$ 
with roots at distance exactly $i$ from $\partial T$; see Figure~\ref{fig:tiling}(b). 
Finally, for each $1\le j \le \ell+1$, we define 
\begin{equation}
\label{eq:tj}
T_j^\ell = \bigcup_{0 \le k \le \frac{h+\ell-j}{\ell+1}} B_{j+k(\ell+1)}^\ell.
\end{equation}
The set $T_j^\ell$ contains all the subtrees of $T$ 
whose roots are at distance $j + k(\ell+1)$ from $\partial T$ for some non-negative integer $k$; the height of each subtree (except the top and bottom ones) is $\ell-1$.
Also notice that all the subtrees in $T_j^\ell$ are at (graph) distance at least $2$ from each other, and thus they create a tiling pattern over $T$. Therefore, we call the block dynamics with blocks $\mathcal U = \{T_1^\ell,\dots,T_{\ell+1}^\ell\}$ the \emph{tiled block dynamics}; see Figure \ref{fig:tiling}(c). 
The transition matrix of the tiled block dynamics is denoted by $\Ptb$. 


\begin{figure}[t]
	\centering
	\captionsetup{justification=centering}
	\begin{subfigure}{0.32\textwidth}
		\centering
		\begin{tikzpicture}
		\node at (0,0) {\includegraphics[scale=0.14]{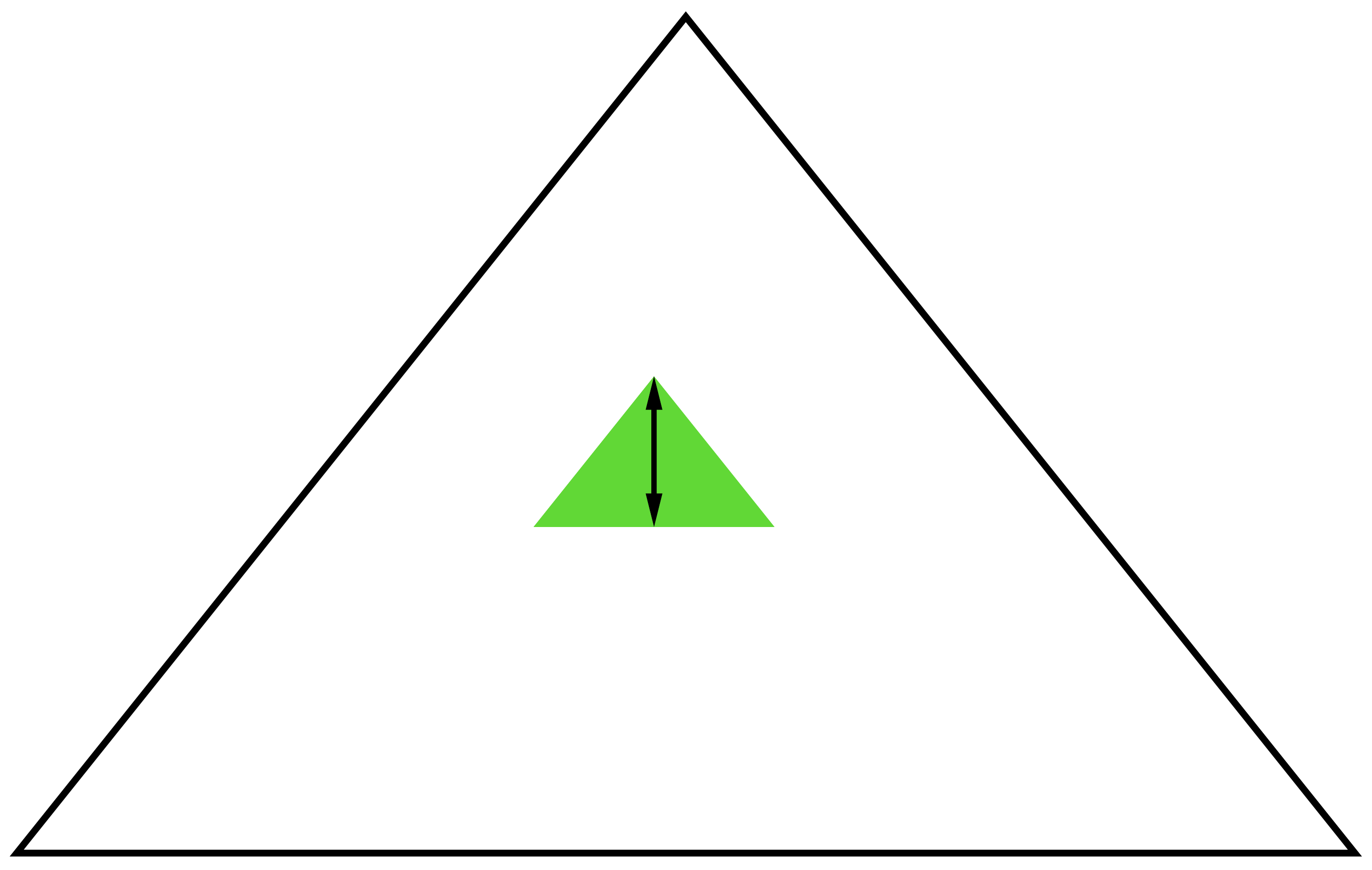}};
		\node at (0.05,-0.06) {$\ell$};
		\node at (-0.11,0.38) {$v$};
		\end{tikzpicture}
		\caption{$B(v,\ell)$}
	\end{subfigure}
	\begin{subfigure}{0.33\textwidth}
		\centering
		\begin{tikzpicture}
		\node at (0,0) {\includegraphics[scale=0.14]{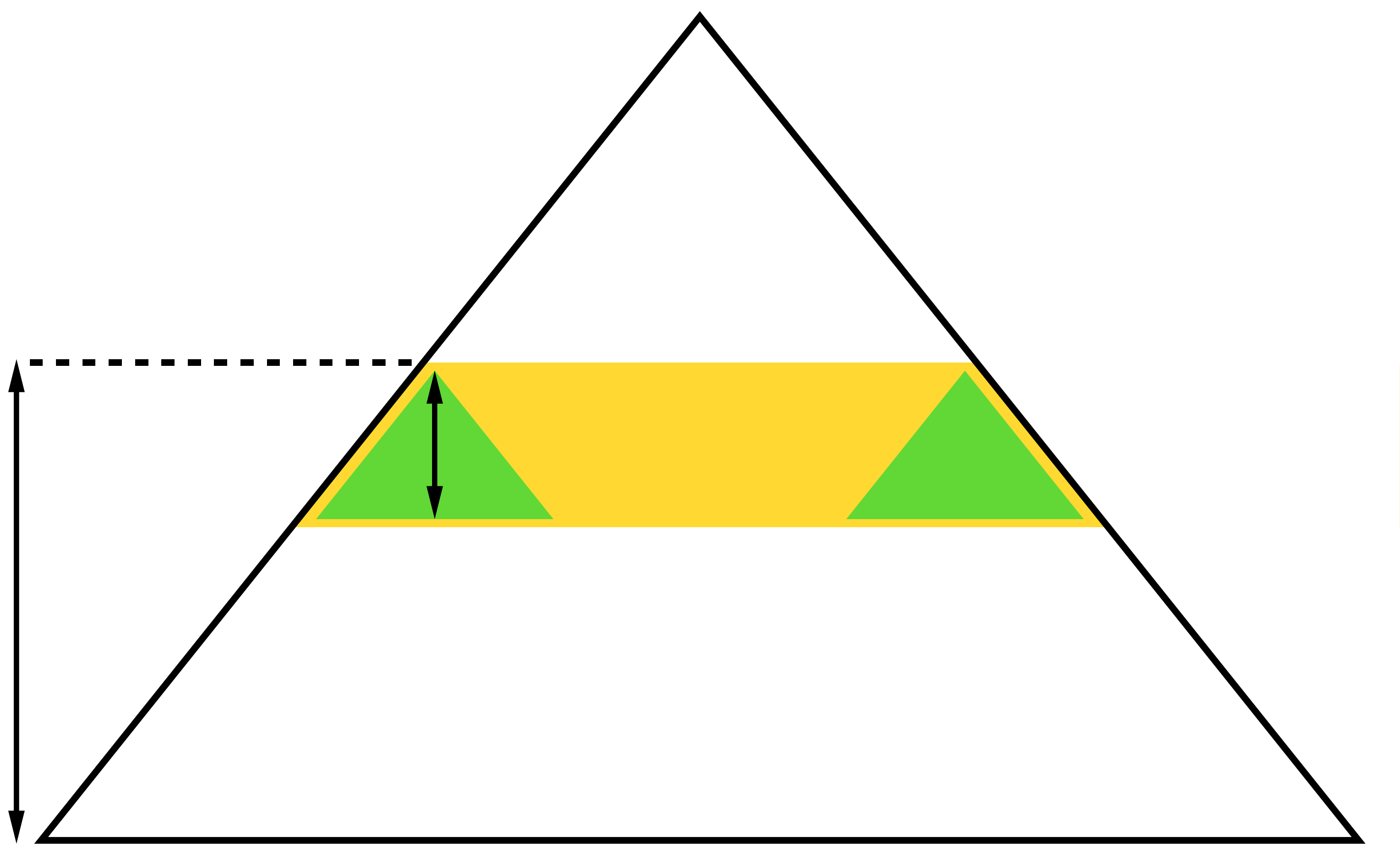}};
		\node at (-0.65,-0.06) {$\ell$};
		\node at (0.03,-0.06) {$\cdots$};
		\node at (-1.9,-0.47) {$i$};
		\node at (2.61,-0.59) {};
		\end{tikzpicture}
		\caption{$B_i^\ell$}
	\end{subfigure}
	\begin{subfigure}{0.33\textwidth}
		\centering
		\begin{tikzpicture}
		\node at (0,0) {\includegraphics[scale=0.14]{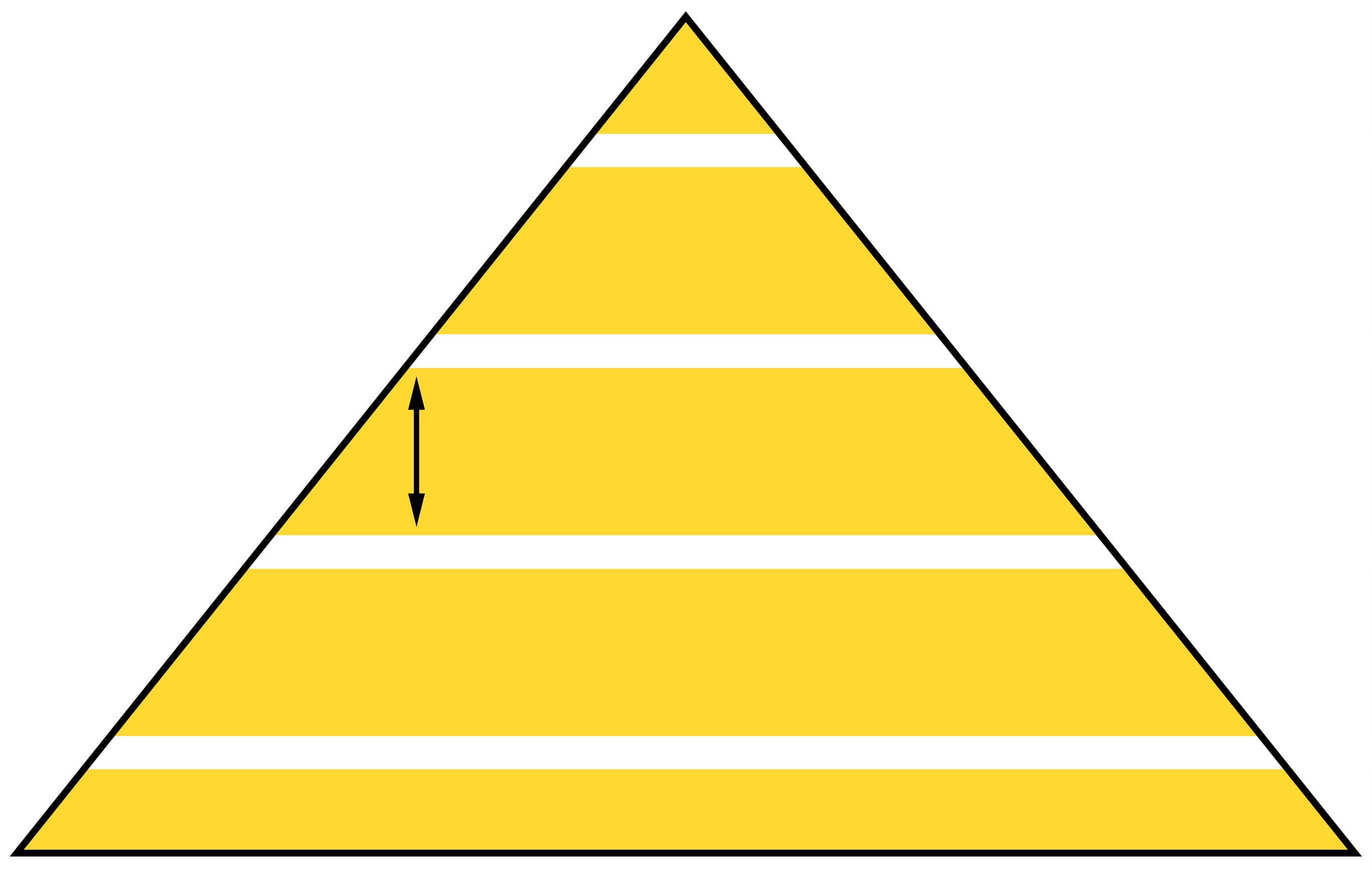}};
		\node at (-0.65,-0.06) {$\ell$};
		\end{tikzpicture}
		\caption{$T_j^\ell$}
	\end{subfigure}
	\caption{An illustration of the sets $B(v,\ell)$, $B_i^\ell$, and $T_j^\ell$, where $\ell$ represents the number of levels.}
	\label{fig:tiling}	
\end{figure}


\medskip\noindent\textbf{Mixing and relaxation times.} \ Let $P$ be the transition matrix of an ergodic Markov chain over a finite set $\Phi$ with stationary distribution $\nu$.
We use $P^t(X_0,\cdot)$ to denote the distribution of the chain after $t$ steps starting from $X_0 \in \Phi$. The mixing time of $P$ is defined as
$
\tmix(P) = \max\limits_{X_0 \in \Phi}\min \left\{ t \ge 0 : {\|P^t(X_0,\cdot)-\nu\|}_{\textsc{tv}} \le 1/4 \right\},
$
where $\|\cdot\|_{\textsc{tv}}$ denotes total variation distance.

When $P$ is reversible, its spectrum is real and we let $1 = \lambda_1 > \lambda_2 \ge ... \ge \lambda_{|\Phi|} \geq -1$ denote its eigenvalues ($1 > \lambda_2$ when $P$ is irreducible).
The {\it absolute spectral gap} of $P$ is defined by $\GAP(P) = 1 - \lambda^*$, where $\lambda^* = \max\{|\lambda_2|,|\lambda_{|\Phi|}|\}$. If $P$ is ergodic (i.e., irreducible and aperiodic), then $\GAP(P)>0$, and
it is a standard fact that if $\nu_{\mathrm{min}} = \min_{x \in \Phi} \nu(x)$, then
\begin{equation}
\label{prelim:eq:gap-lower}
\left(\GAP(P)^{-1}-1\right) \log 2 \le \tmix(P) \le \GAP(P)^{-1} \log \left({4}{\nu_{\mathrm{min}}^{-1}}\right);
\end{equation}
see~\cite{LPW}.
The \textit{relaxation time} of the chain is defined as $\GAP(P)^{-1}$.

\medskip\noindent\textbf{Analytic tools.} \ We review next some useful tools from functional analysis; we refer the reader to~\cite{MT,SC} for more extensive background.
We can endow $\R^{\Phi}$ with the inner product 
$\langle f,g \rangle_\nu = \sum_{x \in \Phi} f(x)g(x)\nu(x)$ for two functions $f,g:\Phi\to\R$.
The resulting Hilbert space is denoted by $L_2(\nu) = (\R^{\Phi},\langle \cdot,\cdot \rangle_\nu)$ and $P$ defines an operator from $L_2(\nu)$ to $L_2(\nu)$.

Let $\allone: \Phi \to \R$ be the constant ``all $1$'' function (i.e., $\allone(x) = 1$ $\forall x\in \Phi$) 
and let $I$ denote the identity mapping over all functions (i.e., $If = f$ for all $f:\Phi \to \R$). 
We then define:
\begin{align*}
\E_\nu(f) & = \sum_{x \in \Phi} f(x)\nu(x) = \ip{f}{\allone}_\nu,~\text{and} \\ 
\Var_\nu(f) &= \E_\nu(f^2) - \E_\nu(f)^2 = \ip{f}{(I-\allone\nu) f}_\nu 
\end{align*}
as the expectation and variance of the function $f$ with respect to (w.r.t.) the measure $\nu$. Likewise, for a function $f: \Omega \to \R_{\ge 0}$ we define the entropy of $f$ with respect to $\nu$ as
$
\Ent_\nu(f) = \E_\nu\big[f \log \big(\frac{f}{\E_\nu(f)}\big)\big].
$

Often, we will consider $\nu$ to be the conditional Gibbs distribution $\mu_A^\eta$ for some $A \subseteq T$ and $\eta\in \Omega$.
In those cases, to simplify the notation, we shall write 
$\E_A^\eta(f)$ for $\E_{\mu_A^\eta}(f)$,
$\Var_A^\eta(f)$ for $\Var_{\mu_A^\eta}(f)$,
and $\Ent_A^\eta(f)$ for $\Ent_{\mu_A^\eta}(f)$.

The \textit{Dirichlet form} of a reversible Markov chain with transition matrix $P$ is defined as
\begin{equation}
\label{eq:df}
\mathcal{E}_P(f,f) = \inner{f}{(I-P)f}{\nu} 
= \frac{1}{2} \sum_{x,y \in \Phi} \nu(x) P(x,y) (f(x) - f(y))^2,
\end{equation}
for any $f:\Phi \to \R$.
We say $P$ is \textit{positive semidefinite} if $\langle  f,P f \rangle_\nu \ge 0$ for all functions $f:\Phi \to \R$. 
In this case $P$ has only nonnegative eigenvalues.
If $P$ is positive semidefinite, then
the absolute spectral gap of $P$ satisfies
\begin{equation}
\label{eq:sw:gap}
\GAP(P) = 1 - \lambda_2 = 
\inf_{\substack{f: \Phi \to \R\\ \Var_\nu(f) \neq 0}} \frac{\mathcal{E}_{P} (f,f)}{\Var_\nu (f)}.
\end{equation}

\section{Variance Mixing implies fast mixing: Proof of \texorpdfstring{\cref{thm:intro-main}}{Theorem 1} }
\label{sec:variance}

We start with the formal definition of the \emph{Variance Mixing (VM)} condition introduced by
Martinelli, Sinclair and Weitz~\cite{MSW04}.
Throughout this section, we consider the Potts model
on the $n$-vertex $d$-ary complete tree $T=T^d_h$ 
with a fixed boundary condition $\tau$; hence, for ease of notation we set
$\mu := \mu_T^\tau$ and $\Omega := \Omega^\tau$.

For $v\in T$, let $T_v$ denote the subtree of $T$ rooted at $v$. 
For boundary condition $\eta \in \Omega$ and a function $g: \Omega_{T_v}^\eta \to \R$, we define the function $g_v: [q] \to \R$ as the conditional expectation 
\begin{equation}
\label{eq:g}
g_v(a) = \E_{T_v}^\eta[g \mid \sigma_v = a]
= \sum_{\sigma \in \Omega_{T_v}^\eta: \sigma_v = a} \mu_{T_v}^\eta(\sigma \mid \sigma_v = a) g(\sigma). 
\end{equation}
In words, $g_v(a)$ is the conditional expectation of the function $g$ under the distribution $\mu_{T_v}^\eta$ given that the 
root of $T_v$ (i.e, the vertex $v$) is set to spin $a \in [q]$.
We also consider the expectation and variance of $g_v$
w.r.t.\ the projection of $\mu_{T_v}^\eta$ on $v$. In particular,
\begin{align*}
\E_{T_v}^\eta[g_v] &= 
\sum\nolimits_{a\in [q]} \mu_{T_v}^\eta(\sigma_v = a) g_v(a)
= \E_{T_v}^\eta[g],
~\text{and} \\
\Var_{T_v}^\eta[g_v] &= \E_{T_v}^\eta[g_v^2] - \E_{T_v}^\eta[g_v]^2.
\end{align*}

\noindent
For an integer $\ell \ge 1$, we define $B(v,\ell)$ as the set of vertices of $T_v$ that are at distance less than $\ell$ from $v$; see Figure~\ref{fig:tiling}(a).
We say that the function $g: \Omega_{T_v}^\eta \to \R$ is independent of the configuration on $B(v,\ell)$ if for all $\sigma, \sigma' \in \Omega_{T_v}^\eta$ such that 
$\sigma(B(v,\ell)) \neq \sigma'(B(v,\ell))$
and $\sigma(T_v \setminus B(v,\ell)) = \sigma'(T_v \setminus  B(v,\ell))$, we have $g(\sigma) = g(\sigma')$. We can now define~$\VM$.

\begin{definition}[Variance Mixing (VM)]\label{def:vm}
	The Gibbs distribution $\mu = \mu_T^\tau$ satisfies $\VM(\ell,\eps)$ if for every $v\in T$, every $\eta \in \Omega$, and every function $g: \Omega_{T_v}^\eta \to \R$ that is independent of the configuration on $B(v,\ell)$, we have
	$
	\Var_{T_v}^\eta(g_v) \le \eps \cdot \Var_{T_v}^\eta(g).
	$
	We say that the VM condition holds if there exist constants $\ell$ and $\eps = \eps(\ell)$ such that $\VM(\ell,\eps)$ holds.
\end{definition}

The VM condition is a spatial mixing property that captures
the rate of decay of correlations, given by $\varepsilon = \varepsilon(\ell)$, with the distance
$\ell$ between $v \in T$ and the set $T_v\setminus B(v,\ell)$.
To see this, note that, roughly speaking, $\Var_{T_v}^\eta(g_v)$ is small when 
$g_v(a) = \E_{T_v}^\eta[g \mid \sigma_v = a]$ is close to $g_v(b) = \E_{T_v}^\eta[g \mid \sigma_v = b]$ for every $a \neq b$. Since $g$ is independent of the configuration on $B(v,\ell)$, 
this can only happen if the spin at $v$, which is at distance $\ell$ 
from $T_v \setminus B(v,\ell)$, has only a small influence on the projections
of the conditional measures $\mu_{T_v}^\eta(\cdot \mid \sigma_v = a)$, $\mu_{T_v}^\eta(\cdot \mid \sigma_v = b)$ to $T_v \setminus B(v,\ell)$.

It was established in~\cite{MSW04,MSW-Potts} that VM implies optimal mixing of the Glauber dynamics;  
this was done by analyzing a block dynamics that updates one random block $B(v,\ell)$ in each step.
This block dynamics behaves similarly to the Glauber dynamics since all blocks are of constant size, and there are a linear number of them; see ~\cite{MSW04,MSW-Potts} for further details. 
Our goal here is to establish optimal mixing of global Markov chains, and thus we require a different spatial mixing condition that captures decay of correlations in a more global manner. For this, we introduce the notion of \emph{Parallel Variance Mixing (PVM)}.
Recall that for $0\le i\le h+1$, $L_i$ is the set all vertices at distance exactly $i$ from the boundary $\partial T$, $F_i = \cup_{j \le i} L_j$, and $B_i^\ell = F_i \bs F_{i-\ell}$; see Figures~\ref{fig:tiling}(b) and~\ref{fig:tiling}(c).

For $1\le i\le h+1$, $\eta \in \Omega$ and $g: \Omega_{F_i}^\eta \to \R$, consider the function 
$g_{L_i}: [q]^{L_i} \to \R$ given by 
$$
g_{L_i}(\xi) = \E_{F_i}^\eta[g \mid \sigma_{L_i} = \xi] = \sum_{\sigma \in \Omega_{F_i}^\eta: \sigma_{L_i} = \xi} \mu_{F_i}^\eta(\sigma \mid \sigma_{L_i} = \xi) g(\sigma),
$$
for $\xi \in [q]^{L_i}$. 
That is, $g_{L_i}(\xi)$ is the conditional expectation of function $g$ under the distribution $\mu_{T_v}^\eta$ conditioned on the configuration of the level $L_i$ being $\xi$.
Thus, we may consider the expectation and variance of $g_{L_i}$
w.r.t.\ the projection of $\mu_{T_v}^\eta$ to $L_i$; namely,
$\E_{F_i}^\eta[g_{L_i}] = \E_{F_i}^\eta[g]$ 
and
$\Var_{F_i}^\eta[g_{L_i}] = \E_{F_i}^\eta[g_{L_i}^2] - \E_{F_i}^\eta[g_{L_i}]^2$.
The PVM condition is defined as follows.

\begin{definition}[Parallel Variance Mixing (PVM)]
	\label{def:pvm}
	The Gibbs distribution $\mu = \mu_T^\tau$ satisfies $\PVM(\ell,\eps)$ if for every $1\le i\le h+1$, every $\eta \in \Omega$, and every function $g: \Omega_{F_i}^\eta \to \R$ that is independent of the configuration on $B_i^\ell$, we have
	$
	\Var_{F_i}^\eta(g_{L_i}) \le \eps \cdot \Var_{F_i}^\eta(g).
	$
	The PVM condition holds if there exist constants $\ell$ and $\eps = \eps(\ell)$ such that $\PVM(\ell,\eps)$ holds.
\end{definition}
PVM is a natural global variant of VM since
$F_i = \bigcup_{v\in L_i} T_v$
and
$B_i^\ell = \bigcup_{v\in L_i} B(v,\ell)$.
We can show that the two properties are actually equivalent.

\begin{theorem}\label{thm:VM=PVM}
	For every $\ell \in \N^+$ and $\eps \in (0,1)$, the Gibbs distribution $\mu$ satisfies $\VM(\ell,\eps)$ if and only if $\mu$ satisfies $\PVM(\ell,\eps)$.
\end{theorem}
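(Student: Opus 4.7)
The plan is to exploit the fact that under $\mu_{F_i}^\eta$, the configurations on the subtrees $\{T_v\}_{v\in L_i}$ are mutually independent. Indeed $F_i=\bigcup_{v\in L_i}T_v$ is a disjoint union, and the boundary condition $\eta$ fixes the spin at the parent of every $v\in L_i$, so no interaction remains between distinct subtrees; hence $\mu_{F_i}^\eta$ factorizes as a product of the measures $\mu_{T_v}^\eta$. Moreover $B_i^\ell=\bigcup_{v\in L_i}B(v,\ell)$ is also a disjoint union, so $F_i\setminus B_i^\ell=\bigcup_{v\in L_i}S_v$ where $S_v:=T_v\setminus B(v,\ell)$.

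For the direction PVM $\Rightarrow$ VM, fix $v\in T$ at distance $i$ from $\partial T$ and a function $g:\Omega_{T_v}^\eta\to\R$ independent of $B(v,\ell)$. Lift $g$ to $\tilde g:\Omega_{F_i}^\eta\to\R$ by setting $\tilde g(\sigma)$ equal to $g$ evaluated at the configuration that agrees with $\sigma$ on $T_v$ and with $\eta$ elsewhere; since $B(v,\ell)\subseteq B_i^\ell$, the lifted $\tilde g$ is independent of $B_i^\ell$. By the product structure of $\mu_{F_i}^\eta$, $\Var_{F_i}^\eta(\tilde g)=\Var_{T_v}^\eta(g)$, and since conditioning on $\sigma_{L_i}=\xi$ also factorizes across subtrees, $\tilde g_{L_i}(\xi)$ depends on $\xi$ only through $\xi_v$ and equals $g_v(\xi_v)$, giving $\Var_{F_i}^\eta(\tilde g_{L_i})=\Var_{T_v}^\eta(g_v)$. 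Applying PVM to $\tilde g$ then yields the VM bound for $g$.

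For the direction VM $\Rightarrow$ PVM, I would invoke the spectral viewpoint previewed in the introduction. For each $v\in L_i$ let $\pi_v$ and $\mu_v$ denote the marginals of $\mu_{T_v}^\eta$ on $\{v\}$ and on $S_v$, and define the averaging operator $P_v^\downarrow:L^2(\mu_v)\to L^2(\pi_v)$ by $(P_v^\downarrow f)(a)=\E[f\mid\sigma_v=a]$ with $L^2$-adjoint $P_v^\uparrow$. Since every $f\in L^2(\mu_v)$ is a function of the configuration on $S_v$ and is thus automatically independent of $B(v,\ell)$, the VM hypothesis at $v$ is equivalent to the spectral inequality $\lambda_2(P_v^\uparrow P_v^\downarrow)\le\eps$ on the mean-zero subspace, and PVM at level $i$ admits the analogous reformulation for the joint operators $P_{L_i}^\uparrow P_{L_i}^\downarrow$. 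The product structure of $\mu_{F_i}^\eta$ both unconditionally and after conditioning on $\sigma_{L_i}$ yields $P_{L_i}^\downarrow=\bigotimes_{v\in L_i}P_v^\downarrow$ and $P_{L_i}^\uparrow=\bigotimes_{v\in L_i}P_v^\uparrow$, whence
\[
P_{L_i}^\uparrow P_{L_i}^\downarrow=\bigotimes_{v\in L_i}\bigl(P_v^\uparrow P_v^\downarrow\bigr)
\]
is a self-adjoint positive semidefinite operator on a tensor product Hilbert space. Its spectrum consists of products of eigenvalues of the factors: the top eigenvalue $1$ corresponds to the tensor product of constant eigenvectors, and every other eigenvalue has at least one factor in a non-top eigenspace, hence is bounded by $\eps\cdot 1^{|L_i|-1}=\eps$ (contributions with two or more non-top factors are bounded by $\eps^2\le\eps$). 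Therefore $\lambda_2\le\eps$ for the tensor product, which is precisely PVM at level $i$.

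The main obstacle is pinning down the spectral reformulation of VM and PVM cleanly and verifying that the tensor-product identities $P_{L_i}^\uparrow=\bigotimes_v P_v^\uparrow$ and $P_{L_i}^\downarrow=\bigotimes_v P_v^\downarrow$ really hold, which relies on the subtree independence in two places: once to factor $\mu_{F_i}^\eta$ itself and once to factor the conditional distribution $\mu_{F_i}^\eta(\cdot\mid\sigma_{L_i}=\xi)$. Once these are in place, PVM $\Rightarrow$ VM is essentially bookkeeping, and VM $\Rightarrow$ PVM reduces to the standard tensorization of spectral gaps on the mean-zero subspace.
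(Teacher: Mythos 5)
Your proposal is correct and takes essentially the same route as the paper: the paper likewise recasts VM and PVM as second-eigenvalue bounds for the operators $P^\uparrow P^\downarrow$ (\cref{le1,prop:eq}) and then exploits the product structure of $\mu_{F_i}^\eta$ over the subtrees $\{T_v\}_{v\in L_i}$, so that the level-$i$ operator is a product chain with $\GAP(Q_{L_i})=\min_{v\in L_i}\GAP(Q_v)$, which is exactly your tensorization step. The only cosmetic difference is that you prove PVM $\Rightarrow$ VM by a direct lifting of $g$ to $\tilde g$, whereas the paper obtains both directions simultaneously from the same spectral identity.
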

In order to show the equivalence between VM and PVM, we introduce a
more general spatial mixing condition which we call \emph{General Variance Mixing (GVM)}.
We define GVM for general product distributions (see Definition~\ref{def1})
and reinterpret VM and PVM as special cases of this condition.
This alternative view of VM and PVM in terms of GVM is quite useful since we can recast the GVM condition as a bound on the spectral gap of a certain Markov chain; this is one key insight in the proof of Theorem~\ref{thm:rc:intro} and is discussed in detail in Section~\ref{sec:VM=PVM}.

Now, while VM implies optimal mixing of the Glauber dynamics, we can show that 
PVM implies a constant bound on the spectral gap of the tiled block dynamics. 
Recall that this is the heat-bath block dynamics with block collection $\mathcal U = \{T_1^\ell,\dots,T_{\ell+1}^\ell\}$ defined in \cref{sec:prelim}.

\begin{theorem}\label{thm:PVM-gap}
	If there exist $\ell\in\N^+$ and $\delta\in(0,1)$ such that $\mu = \mu_T^\tau$ satisfies $\PVM(\ell,\eps)$ for $\eps = \frac{1-\delta}{2(\ell+1)}$, then the relaxation time of the tiled block dynamics is at most $2(\ell+1)/\delta$. 
\end{theorem}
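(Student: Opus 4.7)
The plan is to apply the variational characterization~\eqref{eq:sw:gap}, which reduces the theorem to showing, for every $f:\Omega\to\R$, the variance factorization
\begin{equation*}
\Var_\mu(f) \;\le\; \frac{2(\ell+1)}{\delta}\,\mathcal{E}_{\Ptb}(f,f) \;=\; \frac{2}{\delta}\sum_{j=1}^{\ell+1}\mu\bigl[\Var_{T_j^\ell}(f)\bigr].
\end{equation*}
Unwinding~\eqref{eq:bi}--\eqref{eq:tj}, one verifies that $T\setminus T_j^\ell=\bigcup_{k\ge 0}L_{j+1+k(\ell+1)}$, so as $j$ ranges over $\{1,\dots,\ell+1\}$ the complements $\{T\setminus T_j^\ell\}$ partition the levels of $T$ into its $\ell+1$ residue classes modulo $\ell+1$. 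Because each $L_i$ is a vertex-separator in the tree, conditioning on $\sigma_{T\setminus T_j^\ell}$ decouples the subtrees $\{B_{j+k(\ell+1)}^\ell\}_k$ (the connected components of $T_j^\ell$), so $\mu[\Var_{T_j^\ell}(f)]$ admits a product-measure decomposition over these subtrees.

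The core step is to invoke PVM. Via the law of total variance, $\PVM(\ell,\eps)$ is equivalent to the local contraction $(1-\eps)\Var_{F_i}^\eta(g)\le\E_{F_i}^\eta[\Var(g\mid\sigma_{L_i})]$ for every $i$, every $\eta$, and every $g:\Omega_{F_i}^\eta\to\R$ independent of $B_i^\ell$. I plan to iterate this contraction down the tree, one reveal-level of $T\setminus T_j^\ell$ at a time. Fixing $j$ and enumerating the reveal levels as $L_{i_0},\dots,L_{i_K}$ with $i_s=j+1+s(\ell+1)$, the conditional expectation $\E_\mu[f\mid\sigma_{T\setminus T_j^\ell}]$ is built up level by level; the resulting martingale increments are orthogonal in $L_2(\mu)$, and each can be bounded --- after conditioning on $\sigma_{T\setminus F_{i_s}}$ and averaging out the strip $B_{i_s}^\ell$ --- by the PVM contraction applied to a function of $F_{i_s}$ that is, by construction, independent of $B_{i_s}^\ell$. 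Summing over $s$ and then over $j$, and using that every level $L_i$ lies in exactly $\ell$ of the $\ell+1$ blocks, the per-level loss $\eps$ should aggregate to the required saving $\delta/2$ once $\eps=(1-\delta)/(2(\ell+1))$ is plugged in.

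The principal obstacle is enforcing the ``independent of $B_i^\ell$'' hypothesis of PVM throughout the iteration. The natural test functions arising from $f$ depend on every level of $T$, so invoking PVM at level $L_{i_s}$ requires first averaging over $B_{i_s}^\ell$; the cumulative effect of these auxiliary averagings must be tracked carefully so that it does not spoil the quantitative contraction. I expect this bookkeeping to be streamlined by the spectral reformulation of VM/PVM --- the operator ${P^\uparrow}{P^\downarrow}$ mentioned in the introduction and the equivalence in \Cref{thm:VM=PVM} --- which should recast each PVM application as an operator-norm bound and turn the level-by-level iteration into a transparent telescoping product.
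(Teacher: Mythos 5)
Your skeleton matches the paper's: reduce via \eqref{eq:sw:gap} and \eqref{eq:calE} to a variance factorization, decompose $\Var(f)$ as a telescoping sum of level-by-level increments, and apply the PVM contraction at each level. But the step you flag as "the principal obstacle" is not a bookkeeping nuisance to be streamlined away --- it is the entire quantitative content of the theorem, and your sketch does not supply it. PVM only applies to functions independent of $B_i^\ell$, while the martingale increments $\Ex[\Var_{F_i}(\Ex_{F_{i-1}}(f))]$ involve the full $f$. The paper handles this with \cref{lem:PVM-varbound} (Lemma 3.5(i) of \cite{MSW04}), which upgrades the PVM contraction to the inequality $\Var_F^\eta[\Ex_A(f)] \le \frac{2(1-\eps)}{1-2\eps}\,\Ex_F^\eta[\Var_B(f)] + \frac{2\eps}{1-2\eps}\,\Ex_F^\eta[\Var_A(f)]$ valid for \emph{arbitrary} $f$, and then (in \cref{lem:var-bound}) re-expands the error term $\Ex[\Var_{F_{i-1}}(\Ex_{F_{i-\ell-1}}(f))]$ as a telescoping sum over $\ell$ lower levels, so that summing over $i$ it contributes at most $\ell\cdot\Var(f)$ and can be absorbed precisely because $\eps=\frac{1-\delta}{2(\ell+1)}$ makes $\frac{2\eps\ell}{1-2\eps}<1$. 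Your heuristic that "the per-level loss $\eps$ should aggregate to the required saving $\delta/2$" via the count that each level lies in $\ell$ of the $\ell+1$ blocks is not the mechanism that produces the constant $2(\ell+1)/\delta$; without the two-term inequality and the absorption argument, no bound of the claimed form follows from your iteration. Your hope that the spectral reformulation (${P^\uparrow}{P^\downarrow}$, \cref{thm:VM=PVM}) turns the iteration into a "transparent telescoping product" is also unsubstantiated: the contraction operators at different levels act with respect to different conditional measures and do not compose into a product of operator norms; the paper uses the spectral view only to prove the equivalence VM $\Leftrightarrow$ PVM, not in the proof of this theorem.

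A second, smaller gap: you still need to compare $\sum_j \Ex[\Var_{T_j^\ell}(f)]$ with the per-strip quantities $\Ex[\Var_{B_i^\ell}(\Ex_{F_{i-\ell-1}}(f))]$ appearing in the iteration. Your remark that conditioning on $\sigma_{T\setminus T_j^\ell}$ makes $\mu_{T_j^\ell}^{\sigma}$ a product over the disjoint subtrees is true but not enough: the needed inequality (the paper's \cref{lem:dirich-bound}) is obtained by iterating the law of total variance inside each $T_j^\ell$ over its constituent strips $B_{i(k)}^\ell$ and then invoking convexity of variance (\cref{lemma:e-var}, part 3), which is where the distance-$2$ separation between strips is used to replace the conditioning set by $F_{i(k)-\ell-1}$. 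Neither this step nor the MSW-type lemma above appears in your proposal, so as written the argument does not close.
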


To prove~\cref{thm:PVM-gap}, we adapt the methods from~\cite{MSW04,MSW-Potts} to our global setting; see~\cref{sec:PVM-gap}. Our result for the spectral gap of the SW dynamics (\cref{thm:intro-main}) is then obtained through comparison with the tiled block dynamics.
We prove the following comparison result between the SW dynamics and a large class of block dynamics, which could be of independent interest.

\begin{theorem}
	\label{thm:sw-block}
	Let $\mathcal D = \{D_1,\dots,D_m\}$ be such that $D_i \subseteq T$ and $\cup_{i=1}^m D_i = T$. 
	Suppose that each block $D_k$ is such that $D_k = \cup_{j=1}^{\ell_k} D_{kj}$ where $\dist(D_{kj}, D_{kj'}) \ge 2$ for every $j \neq j'$ and let $\mathrm{vol}(\mathcal D) = \max_{k,j} |D_{kj}|$. 
	Let $\mathcal B_\mathcal D$ be the transition matrix of the (heat-bath) block dynamics with blocks $\mathcal D$ and let	
	$\PSW$ denote the transition matrix for the SW dynamics.
	Then,
	$
	\GAP(\PSW)  \ge \exp(-O(\mathrm{vol}(\mathcal D))) \cdot \GAP(\mathcal B_\mathcal D).
	$
\end{theorem}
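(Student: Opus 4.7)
The plan is to prove the stronger Dirichlet-form domination
\[
\mathcal{E}_{\PSW}(f,f) \;\geq\; e^{-O(v)}\,\mathcal{E}_{\mathcal B_\mathcal D}(f,f)
\]
for every $f\colon \Omega\to\R$, where $v := \mathrm{vol}(\mathcal D)$; combined with the variational characterization \eqref{eq:sw:gap} (applied to the lazy SW chain or to $\PSW^2$ so as to secure positive semi-definiteness, at the cost of an absolute constant), this yields the stated spectral-gap comparison.

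First, I reduce from big blocks to subblocks. Since the subblocks $D_{k1},\dots,D_{k\ell_k}$ of each $D_k$ are at pairwise distance $\geq 2$, no edge of $T$ joins two of them, and so $\mu_{D_k}^\sigma$ factorizes as the product $\bigotimes_j \mu_{D_{kj}}^\sigma$. Tensorization of variance for product measures then gives $\Var_{D_k}^\sigma(f) \leq \sum_j \E_{\mu_{D_k}^\sigma}\!\bigl[\Var_{\mu_{D_{kj}}^\sigma}(f)\bigr]$, whence
\[
\mathcal{E}_{\mathcal B_\mathcal D}(f,f) \;=\; \frac{1}{m}\sum_{k=1}^m \E_\mu\bigl[\Var_{D_k}^\cdot(f)\bigr] \;\leq\; \frac{1}{m}\sum_{k,j} \E_\mu\bigl[\Var_{D_{kj}}^\cdot(f)\bigr],
\]
so it suffices to lower bound $\mathcal{E}_{\PSW}(f,f)$ by $e^{-O(v)}/m$ times the same sum.

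The second and main step is to show that, for every fixed subblock $D := D_{kj}$, a single SW step ``realizes'' a heat-bath update on $D$ with probability $e^{-O(v)}$. Consider the event $G_D$ that every monochromatic edge of $\sigma$ in $E(D) \cup \partial D$ is \emph{removed} in the bond-percolation step of SW. Since $|E(D)| + |\partial D| \leq (d+1)|D| \leq O(v)$, we have $\Pr[G_D \mid \sigma] \geq (1-p)^{O(v)} = e^{-O(v)}$; and on $G_D$ every vertex of $D$ is a singleton of $(V, A)$, hence is re-colored by an independent uniform spin in $[q]$, independently of $\sigma'|_{V\setminus D}$. Because $|D|\leq v$, the uniform measure on $[q]^D$ and $\mu_D^\eta$ differ by at most the pointwise factor $e^{O(\beta v)}$ (Gibbs weights lie in $[e^{-\beta|E(D\cup\partial D)|},1]$, partition function in $[1,q^{|D|}]$), so $\Var_{\mathrm{Unif}}(g) \geq e^{-O(v)}\,\Var_{\mu_D^\eta}(g)$ for any $g$ and $\eta$. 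Coupling $\sigma'$ with the ``ghost'' configuration obtained by instead resampling $\sigma'|_D$ from $\mu_D^{\sigma'|_{\partial D}}$, and using this density comparison, the contribution of $G_D$ to $\tfrac12 \E_\mu[(f(\sigma)-f(\sigma'))^2]$ is at least $e^{-O(v)}\,\E_\mu[\Var_{D}^\cdot(f)]$, yielding a per-subblock bound $\mathcal{E}_{\PSW}(f,f) \geq e^{-O(v)}\,\E_\mu[\Var_D^\cdot(f)]$.

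The final step is to aggregate the per-subblock bound across all $(k,j)$ to recover the full sum. Here the crucial observation is that a single SW step updates the whole configuration in parallel: for two subblocks $D, D'$ at graph distance $\geq 2$, the isolation events $G_D$ and $G_{D'}$ concern disjoint sets of edges, so the corresponding uniform resamplings are independent. Partitioning $\{D_{kj}\}$ into a bounded number of color classes, each consisting of pairwise distance-$\geq 2$ subblocks (possible with $O(1)$ colors in all cases of interest, such as the tiled block dynamics), lets us apply the per-subblock argument in parallel within each class, summing to
\[
\mathcal{E}_{\PSW}(f,f) \;\geq\; \frac{e^{-O(v)}}{m}\sum_{k,j} \E_\mu\bigl[\Var_{D_{kj}}^\cdot(f)\bigr] \;\geq\; e^{-O(v)}\,\mathcal{E}_{\mathcal B_\mathcal D}(f,f).
\]
The main obstacle is this last parallelization: the SW Dirichlet form compares $f(\sigma)$ to $f(\sigma')$ where $\sigma'$ may differ from $\sigma$ \emph{both} on $D$ and on $V \setminus D$, so isolating the variance contribution from a single $D$ requires a delicate coupling of the external part of the SW step with the conditional Gibbs measure; naively summing the per-subblock bounds would otherwise lose a factor linear in the number of subblocks.
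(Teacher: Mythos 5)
There is a genuine gap, and you put your finger on it yourself in your last sentence: the aggregation over the subblocks of a single block $D_k$ is never actually carried out, and it is the crux of the theorem. Your per-subblock bound $\mathcal{E}_{\PSW}(f,f) \ge e^{-O(v)}\,\E_\mu[\Var_{D_{kj}}(f)]$ can indeed be made rigorous (condition on everything but the recoloring of the singletons of $D_{kj}$ on the isolation event, use $\E[(a-X)^2]\ge \Var(X)$, the density comparison, and stationarity of $\sigma'$ to restore the Gibbs-conditional variance), and it is the analogue of the paper's one-step coupling bound for $\SWTC{kj}{\eta}$. But to reach $\mathcal{E}_{\PSW}(f,f)\ge \frac{e^{-O(v)}}{m}\sum_{k}\sum_j \E_\mu[\Var_{D_{kj}}(f)]$ you must sum over the $\ell_k$ subblocks of each block with only an $e^{-O(v)}$ loss. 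Taking the per-subblock bounds one at a time loses a factor $\max_k\ell_k$, which for the tiled blocks is $\Theta(n)$ and kills the intended application (Theorem \ref{thm:intro-main}); intersecting the isolation events $G_{D_{kj}}$ over all $j$ has probability $e^{-\Omega(v\,\ell_k)}$, which is even worse; and the proposed ``$O(1)$ color classes'' does nothing, since the subblocks of one block are already pairwise at distance $\ge 2$ --- the difficulty is not a coloring issue but the fact that the variance contributions of many simultaneously updated, far-apart subblocks do not add up from a single-subblock event decomposition of the Dirichlet form (cancellations in $f(\sigma)-f(\sigma')$ across regions are exactly what must be ruled out). Declaring that this ``requires a delicate coupling'' is naming the missing lemma, not proving it.

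The paper closes precisely this hole by working at the operator level rather than event-by-event: using Ullrich's decomposition $\PSW=TRT^*$, the identity $R=Q_kRQ_k$ and Cauchy--Schwarz give $\inner{f}{\PSW f}{\mu_G^\tau}\le\inner{f}{TQ_kT^*f}{\mu_G^\tau}$, i.e.\ SW is dominated in Dirichlet form by the chain $\SWT$ that performs the full percolation step but recolors only components inside the chosen block (Lemma \ref{lem:SW-comp1}); then $\mathcal{E}_{\SWT}$ is decomposed block-by-block and compared to $\mathcal B_{\mathcal D}$ via the gap of the restricted chain $\SWTC{k}{\eta}$ (Lemma \ref{lemma:sw2}); finally, because the subblocks are at distance $\ge 2$, $\SWTC{k}{\eta}$ is a \emph{product} chain and $\GAP(\SWTC{k}{\eta})=\min_j\GAP(\SWTC{kj}{\eta})\ge e^{-O(v)}$ (Lemmas \ref{lemma:sw-3}, \ref{lemma:sw-4}) --- the product-gap identity is what makes the bound independent of the number of subblocks, and it is exactly the ingredient your parallelization step lacks. (A minor point: passing to the lazy chain or $\PSW^2$ is unnecessary; $\PSW=TRT^*$ is positive semidefinite directly, as is the heat-bath block dynamics.)
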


The blocks of the tiled block dynamics satisfy all the conditions in this theorem, and, in addition, $\mathrm{vol}(\mathcal D) = O(1)$. Hence, combining all the results stated in this section, we see that \cref{thm:intro-main} from introduction follows.

\begin{proof}[Proof of \cref{thm:intro-main}]
	Follows from \cref{thm:VM=PVM,thm:PVM-gap,thm:sw-block}.
\end{proof}


\subsection{Equivalence between VM and PVM: Proof of \texorpdfstring{\cref{thm:VM=PVM}}{Theorem 7}}
\label{sec:VM=PVM}

In this section we establish the equivalence between VM and PVM. 
We start with the definition of \emph{General Variance Mixing (GVM)}.
Let $\Phi$ and $\Psi$ be two finite sets and let $\rho(\cdot,\cdot)$ be an arbitrary joint distribution supported on $\Phi \times \Psi$. 
Denote by $\nu$ and $\pi$ the marginal distributions of $\rho$ over $\Phi$ and $\Psi$, respectively. That is,
for $x \in \Phi$ we have $\nu(x) = \sum_{y\in \Psi} \rho(x,y)$, and for 
$y\in \Psi$ we have $\pi(y) = \sum_{x\in \Phi} \rho(x,y).$
We consider two natural matrices associated to $\rho$. 
For $x\in \Phi$ and $y\in \Psi$, define 
\begin{equation}
\label{eq:matrices}
P^\uparrow(x,y) = \rho(y \mid x) = \frac{\rho(x,y)}{\nu(x)},~\text{and }\quad\quad P^\downarrow(y,x) = \rho(x\mid y) = \frac{\rho(x,y)}{\pi(y)};
\end{equation}
$P^\uparrow$ is a $|\Phi| \times |\Psi|$ matrix while $P^\downarrow$ is a $|\Psi| \times |\Phi|$ matrix. 
In addition, observe that $P^\uparrow P^\downarrow$ and $P^\downarrow P^\uparrow$ are transition matrices of Markov chains reversible w.r.t.\ $\nu$ and $\pi$, respectively.

\begin{definition}[GVM for $\rho$]\label{def1}
	We say that the joint distribution $\rho$ satisfies $\GVM(\eps)$ if for every function $f: \Phi \to \R$ we have
	$
	\Var_\pi(P^\downarrow f) \le \eps \cdot \Var_\nu(f).
	$
\end{definition}

One key observation in our proof is that the GVM condition can be expressed in term of the spectral gaps of the matrices $P^\uparrow P^\downarrow$ and $P^\downarrow P^\uparrow$.

\begin{lemma}\label{le1}
	The joint distribution $\rho$ satisfies $\GVM(\eps)$ 
	if and only if 
	$ \GAP(P^\uparrow P^\downarrow) = \GAP(P^\downarrow P^\uparrow) \ge 1-\eps. $ 
\end{lemma}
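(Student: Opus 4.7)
The plan is to identify the Dirichlet form of $P^\uparrow P^\downarrow$ (with respect to $\nu$) with the variance drop $\Var_\nu(f) - \Var_\pi(P^\downarrow f)$, so that the GVM bound becomes precisely the Poincar\'e inequality governing the spectral gap.

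First I would verify that $P^\uparrow$ and $P^\downarrow$ are adjoints of each other with respect to the appropriate inner products: viewed as operators $P^\uparrow: L_2(\pi)\to L_2(\nu)$ and $P^\downarrow: L_2(\nu)\to L_2(\pi)$, one checks
\[
\langle P^\uparrow g, f\rangle_\nu \;=\; \sum_{x,y}\rho(x,y)f(x)g(y) \;=\; \langle g, P^\downarrow f\rangle_\pi.
\]
A consequence is that $P^\uparrow P^\downarrow$ is self-adjoint and positive semidefinite on $L_2(\nu)$ (with stationary distribution $\nu$), and similarly $P^\downarrow P^\uparrow$ is PSD on $L_2(\pi)$. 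In particular, by the standard fact that $AB$ and $BA$ share the same non-zero spectrum, and both chains have the constant function as a top eigenvector with eigenvalue $1$, their spectral gaps coincide: $\GAP(P^\uparrow P^\downarrow) = \GAP(P^\downarrow P^\uparrow)$.

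Next, I would compute both sides of the GVM inequality in terms of the operator $P^\uparrow P^\downarrow$. Using the adjoint relation and the identity $\E_\pi[P^\downarrow f] = \sum_{x,y}\rho(x,y)f(x) = \E_\nu[f]$, a short calculation gives
\[
\Var_\pi(P^\downarrow f) \;=\; \langle P^\downarrow f, P^\downarrow f\rangle_\pi - (\E_\nu f)^2 \;=\; \langle f, P^\uparrow P^\downarrow f\rangle_\nu - (\E_\nu f)^2.
\]
Subtracting this from $\Var_\nu(f) = \langle f,f\rangle_\nu - (\E_\nu f)^2$ yields the key identity
\[
\Var_\nu(f) - \Var_\pi(P^\downarrow f) \;=\; \langle f, (I - P^\uparrow P^\downarrow)f\rangle_\nu \;=\; \mathcal{E}_{P^\uparrow P^\downarrow}(f,f).
\]
Thus the GVM condition $\Var_\pi(P^\downarrow f)\le \eps\,\Var_\nu(f)$ is equivalent to $\mathcal{E}_{P^\uparrow P^\downarrow}(f,f)\ge (1-\eps)\Var_\nu(f)$ for every $f:\Phi\to\R$.

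Finally, since $P^\uparrow P^\downarrow$ is PSD and reversible with respect to $\nu$, the variational characterization \eqref{eq:sw:gap} gives
\[
\GAP(P^\uparrow P^\downarrow) \;=\; \inf_{\Var_\nu(f)\ne 0}\frac{\mathcal{E}_{P^\uparrow P^\downarrow}(f,f)}{\Var_\nu(f)},
\]
so the equivalent form of GVM above is exactly $\GAP(P^\uparrow P^\downarrow)\ge 1-\eps$. Combined with the equality of the two spectral gaps noted earlier, this completes the proof. No real obstacle is anticipated; the only point requiring mild care is the equality $\GAP(P^\uparrow P^\downarrow)=\GAP(P^\downarrow P^\uparrow)$, which I would justify via the shared non-zero spectrum of $AB$ and $BA$.
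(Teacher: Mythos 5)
Your proposal is correct and follows essentially the same route as the paper: verify the adjointness of $P^\uparrow$ and $P^\downarrow$, identify $\Var_\nu(f)-\Var_\pi(P^\downarrow f)$ with the Dirichlet form $\mathcal{E}_{P^\uparrow P^\downarrow}(f,f)$, and conclude via the variational characterization \eqref{eq:sw:gap}, with the equality of the two gaps coming from the shared non-zero spectrum of $P^\uparrow P^\downarrow$ and $P^\downarrow P^\uparrow$. No substantive differences from the paper's argument.
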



Before providing the proof of \cref{le1}, we recall the definition of the \emph{adjoint} operator. 
Let $S_1$ and $S_2$ be two Hilbert spaces with inner products $\langle\cdot,\cdot\rangle_{S_1}$ and $\langle\cdot,\cdot\rangle_{S_2}$ respectively, and let $K:S_2 \rightarrow S_1$ be a bounded linear operator. The {adjoint} of $K$ is the unique operator $K^*:S_1 \rightarrow S_2$ satisfying $\langle f,Kg \rangle_{S_1} = \langle K^*f,g \rangle_{S_2}$ for all $f \in S_1$ and $g \in S_2$. When $S_1 = S_2$, $K$ is called \textit{self-adjoint} if $K = K^*$.
We can now provide the proof of \cref{le1}.

\begin{proof}[Proof of \cref{le1}]
	It is straightforward to check that $P^\uparrow \allone = \allone$, $P^\downarrow \allone = \allone$, $\nu P^\uparrow = \pi$, $\pi P^\downarrow = \nu$, 
	and that the operator $P^\uparrow: L_2(\pi) \to L_2(\nu)$ is the adjoint of the operator $P^\downarrow: L_2(\nu) \to L_2(\pi)$. 
	Hence, both $P^\uparrow P^\downarrow$ and $P^\downarrow P^\uparrow$ are positive semidefinite and have the same multiset of non-zero eigenvalues. 
	Now,  for $f:\Phi \to \R$, we have 
	\[
	\Var_\pi(P^\downarrow f) 
	= \ip{P^\downarrow f}{(I-\allone \pi) P^\downarrow f}_\pi
	= \ip{f}{P^\uparrow (I-\allone \pi) P^\downarrow f}_\nu
	= \ip{f}{P^\uparrow P^\downarrow f}_\nu - \ip{f}{\allone \nu f}_\nu. 
	\] 
	Therefore, 	$\Var_\pi(P^\downarrow f) \le \eps \cdot \Var_\nu(f)$ holds if and only if
	\begin{align*}
	\ip{f}{P^\uparrow P^\downarrow f}_\nu - \ip{f}{\allone \nu f}_\nu &\le \eps \cdot \left( \ip{f}{f}_\nu - \ip{f}{\allone \nu f}_\nu \right)\\
	\Leftrightarrow \quad  \ip{f}{(I - P^\uparrow P^\downarrow) f}_\nu &\ge (1-\eps) \cdot \ip{f}{(I - \allone \nu) f}_\nu\\
	\Leftrightarrow \quad  \EE_{P^\uparrow P^\downarrow}(f,f) &\ge (1-\eps) \cdot \Var_\nu(f). 
	\end{align*}
	The lemma then follows from \eqref{eq:sw:gap}. 
\end{proof}

We provide next the proof of \cref{thm:VM=PVM}, which follows from \Cref{le1}
and interpretations of $\VM$ and $\PVM$ by $\GVM$. 
Given $F=A\cup B \subseteq T$ and $\eta \in \Omega$, let $P^\uparrow = (P_F^\eta)_{A\uparrow B}$ denote
the $q^{|A \setminus B|} \times q^{|B \setminus A|}$ stochastic matrix indexed by the configurations on the sets $A \setminus B$ and $B \setminus A$, 
such that for $\xi \in [q]^{A \setminus B}$ and $\xi' \in [q]^{B\setminus A}$ we have
$
P^\uparrow(\xi,\xi') = \mu_F^\eta(\sigma_{B\setminus A} = \xi' \mid \sigma_{A\setminus B} = \xi). 
$
In words, $P^\uparrow$ corresponds to the transition matrix that given the configuration $\xi$ in $A \setminus B$ updates the configuration in $B \setminus A$ from the conditional distribution $\mu_F^\eta(\cdot \mid \xi)$.  
We define in a similar manner the $q^{|B\setminus A|} \times q^{|A\setminus B|}$ stochastic matrix $P^\downarrow = (P_F^\eta)_{B\downarrow A}$ where for $\xi' \in [q]^{B\setminus A}$ and $\xi \in [q]^{A \setminus B}$ we have
$
P^\downarrow(\xi',\xi) = \mu_F^\eta(\sigma_{A\setminus B} = \xi \mid \sigma_{B\setminus A} = \xi'). 
$

If we set $\rho$ to be the marginal of $\mu_F^\eta$ on $(A \setminus B) \cup (B\setminus A)$, then $\Phi = [q]^{A \setminus B}$, $\Psi = [q]^{B\setminus A}$,
and $\nu$ and $\pi$ are the marginals of $\mu_F^\eta$ on $A \setminus B$ and $B\setminus A$, respectively. Therefore, according to~\cref{def1}, $\GVM(\varepsilon)$ holds for the marginal of $\mu_F^\eta$ on $(A \setminus B) \cup (B\setminus A)$ if 
$
\Var_\pi(P^\downarrow f) \le \eps \cdot \Var_\nu(f)
$
for every function $f: \Phi \to \R$.

Now, note that a function $g: \Omega_F^\eta \to \R$ independent of $B$ only depends on the configuration on $A \setminus B$. Thus, for fixed $\eta$, $g$ induces a function $f: \Phi \to \R$; in particular, $\Var_F^\eta(g) = \Var_\nu(f)$.
Moreover, letting $g_{B\setminus A}(\xi) := \E_F^\eta[g \mid \sigma_{B \setminus A} = \xi]$, we have $g_{B\setminus A}(\xi)= P^\downarrow f (\xi)$ for every $\xi \in \Psi =  [q]^{B\setminus A}$, and so $\Var_F^\eta(g_{B\setminus A})= \Var_\pi(P^\downarrow f)$.
Consequently, we arrive at the following equivalences between $\VM$, $\PVM$ and $\GVM$.

\begin{proposition} \label{prop:eq} \
	\begin{enumerate}
	\item The Gibbs distribution $\mu$ satisfies $\VM(\ell,\epsilon)$ if and only if
	for every $v \in T$ and $\eta \in \Omega$,
	$\GVM(\varepsilon)$ holds for the marginal of $\mu_{T_v}^\eta$ on $(T_v \setminus B(v,\ell)) \cup \{v\}$.
	\item The Gibbs distribution $\mu$ satisfies $\PVM(\ell,\epsilon)$ if and only if
	for every $i$ such that $1\le i \le h+1$ and $\eta \in \Omega$,
	$\GVM(\varepsilon)$ holds for the marginal of $\mu_{F_i}^\eta$ on $(F_i \setminus \cup_{v \in L_i} B(v,\ell)) \cup L_i$.
	\end{enumerate}
\end{proposition}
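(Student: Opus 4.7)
The plan is to specialize the $\GVM$ correspondence developed in the paragraphs immediately preceding the proposition, once for $\VM$ and once for $\PVM$. In each case I only need to choose concrete sets $A, B$ (with $F = A \cup B$) so that the symmetric difference $A \triangle B$ matches the set on which the proposition claims $\GVM$, with $A \setminus B$ playing the role of the source side and $B \setminus A$ the target side of $P^\downarrow$.

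For part (i), take $F = T_v$, $A = T_v \setminus \{v\}$, and $B = B(v, \ell)$. Direct checks give $A \cup B = T_v$, $A \setminus B = T_v \setminus B(v, \ell)$, and $B \setminus A = \{v\}$, so the marginal of $\mu_F^\eta$ on $(A \setminus B) \cup (B \setminus A)$ is exactly the marginal in the proposition. A function $g$ that is independent of the configuration on $B(v, \ell)$ is, in the general framework, a function independent of $B$, and hence corresponds bijectively to an arbitrary $f : \Phi \to \R$ on $\Phi = [q]^{T_v \setminus B(v, \ell)}$ with $\Var_\nu(f) = \Var_{T_v}^\eta(g)$. By construction, $(P^\downarrow f)(a) = \E_{T_v}^\eta[g \mid \sigma_v = a] = g_v(a)$ on $\Psi = [q]$, whence $\Var_\pi(P^\downarrow f) = \Var_{T_v}^\eta(g_v)$. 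Quantifying over $v$ and $\eta$ makes the $\VM(\ell,\varepsilon)$ and $\GVM(\varepsilon)$ inequalities coincide.

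For part (ii), take $F = F_i$, $A = F_i \setminus L_i$, and $B = B_i^\ell$. The key combinatorial step is the identity $\bigcup_{v \in L_i} B(v, \ell) = B_i^\ell$, verified by noting that $B(v, \ell)$ for $v \in L_i$ consists precisely of the vertices of $T_v$ on levels $L_{i - \ell + 1}, \ldots, L_i$; taking the union over all $v \in L_i$ sweeps out exactly these levels, which equals $F_i \setminus F_{i - \ell} = B_i^\ell$ by definition. Using $L_i \subseteq B_i^\ell$, one gets $A \cup B = F_i$, $A \setminus B = F_i \setminus B_i^\ell$, and $B \setminus A = L_i$. The $\PVM$ hypothesis that $g$ is independent of $B_i^\ell$ corresponds to an arbitrary $f : \Phi \to \R$ with $\Phi = [q]^{F_i \setminus B_i^\ell}$, and $(P^\downarrow f)(\xi) = g_{L_i}(\xi)$ on $\Psi = [q]^{L_i}$, yielding the identical variance identities. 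There is no deep technical obstacle: the proof is pure bookkeeping, with the only minor subtlety being the set identity $\bigcup_{v \in L_i} B(v, \ell) = B_i^\ell$ used in part (ii).
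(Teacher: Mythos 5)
Your proposal is correct and matches the paper's argument: the paper proves the proposition by exactly the same specialization, taking $F = T_v$, $A = T_v \setminus \{v\}$, $B = B(v,\ell)$ for part (i) and $F = F_i$, $A = F_{i-1}$ (which equals your $F_i \setminus L_i$), $B = B_i^\ell$ for part (ii), relying on the same correspondence $g \leftrightarrow f$, $g_{B\setminus A} = P^\downarrow f$ set up before the proposition. Your verification of the identity $\bigcup_{v \in L_i} B(v,\ell) = B_i^\ell$ is also consistent with the paper, which records this identity just after the definition of PVM.
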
	

\noindent
To see part 1 simply note that in the notation above, we can set $F = T_v$, $A = T_v \setminus v$ and $B = B(v,\ell)$. For part 2, we set $F = F_i$, $A = F_{i-1}$ and $B = B_i^\ell$.

\begin{proof}[Proof of \cref{thm:VM=PVM}]
	From~\cref{prop:eq,le1}, $\VM(\ell,\epsilon)$ holds if and only if
	$\GAP(Q_v) \ge 1-\varepsilon$ for every $v \in T$ and $\eta \in \Omega$, where $Q_v = (P_{T_v}^\eta)_{B(v,\ell)\downarrow (T_v \setminus v)} (P_{T_v}^\eta)_{(T_v \setminus v)\uparrow B(v,\ell)}$.
	Similarly, $\mu$ satisfies $\PVM(\ell,\epsilon)$ if and only if
	$\GAP(Q_{L_i}) \ge 1-\varepsilon$ for every $i$ such that $1\le i \le h+1$ and $\eta \in \Omega$, where $Q_{L_i} = (P_{F_i}^\eta)_{{B_i^\ell}\downarrow {F_{i-1}}} (P_{{F_i}}^\eta)_{{F_{i-1}}\uparrow {B_i^\ell}}$.
	
	Since $F_i = \bigcup_{v\in L_i} T_v$ and the $T_v$'s are at distance at least two from each other,  $\mu_{F_i}^\eta(\sigma_{L_i} = \cdot)$ is a product distribution; in particular $\mu_{F_i}^\eta(\sigma_{L_i} = \cdot) = \prod_{v \in L_i} \mu_{T_v}^\eta(\sigma_v = \cdot)$ and the chain with transition matrix $Q_{L_i}$ is a product Markov chain where each component corresponds to $Q_v$ for some $v \in L_i$. 
	A standard fact about product Markov chains, see, e.g.,~\cite[Lemma 4.7]{BCSV}, then implies that
	$
	\GAP(Q_{L_i}) = \min_{v \in L_i} \GAP(Q_v) 
	$
	and the result follows.
\end{proof}

\subsection{PVM implies fast mixing of the tiled block dynamics: Proof of~\cref{thm:PVM-gap}}
\label{sec:PVM-gap}



In this section we prove \cref{thm:PVM-gap} by showing $\GAP(\Ptb) = \Theta(1)$ when the PVM condition holds; recall that $\Ptb$ denotes the transition matrix of the tiled block dynamics defined in~\cref{sec:prelim}. 
We introduce some useful simplification of our notation next.
For $A \subseteq T$ and $f: \Omega \to \R$,
we define functions $\Ex_A(f): \Omega \to \R$ and $\Var_A(f): \Omega \to \R$  to be the conditional expectation and variance of $f$ given the configuration on $T \setminus A$; i.e., for $\xi \in \Omega$, 
$(\Ex_A(f))(\xi)$ and $(\Var_A(f))(\xi)$ are expectation and variance of $f$ on $A$ given $\xi_{T \setminus A}$ outside $A$:
\begin{align*}
(\Ex_A(f))(\xi) &= \Ex_A^\xi(f) = \Ex_\mu[f \mid \sigma_{T\setminus A} = \xi_{T\setminus A}],~\text{and}\\
(\Var_A(f))(\xi) &= \Var_A^\xi(f) = \Var_\mu[f \mid \sigma_{T\setminus A} = \xi_{T\setminus A}].
\end{align*}
Observe that both $\Ex_A(f)$ and $\Var_A(f)$ depend only on the configuration on $T \setminus A$. 
Furthermore, we will write $\Ex(f) = \Ex_T(f) = \Ex_\mu(f)$ and $\Var(f) = \Var_T(f) = \Var_\mu(f)$ for convenience. 

We compile next several useful, standard properties of the expectation and variance functionals that we shall use in our proofs.

\begin{lemma}\label{lemma:e-var}
	Let $\eta \in \Omega$ and $f:\Omega \to \R$ be an arbitrary function and
	\begin{enumerate}
		\item (Law of total expectation)
		For every $A \subseteq F \subseteq T$, we have
		\[
		\Ex_F^\eta(f) = \Ex_F^\eta(\Ex_A(f)).
		\]
		\item (Law of total variance)
		For every $A \subseteq F \subseteq T$, we have
		\[
		\Var_F^\eta(f) = \Ex_F^\eta[\Var_A(f)] + \Var_F^\eta[\Ex_A(f)].
		\]
		\item (Convexity of variance)
		For every $A,B \subseteq T$ such that $A \cap B = \emptyset$ and there is no edge between $A,B$ (i.e., $\partial A \cap B = \emptyset = A \cap \partial B$), we have
		\[
		\Var_A^\eta[\Ex_B(f)] \le \Ex_B^\eta[\Var_A(f)].
		\]
	\end{enumerate}
\end{lemma}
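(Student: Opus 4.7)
The plan is to treat the three items sequentially; each is a discrete analogue of a standard fact from conditional probability, and the only genuinely spatial input is the Markov property of $\mu$, which enters only in part~(3).

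For part~(1), since $A \subseteq F$, the function $\Ex_A(f)$ depends only on the configuration on $T \setminus A$, which is determined once we specify $\sigma_{F\setminus A}$ together with the fixed boundary $\eta_{T\setminus F}$. I would expand $\Ex_F^\eta(\Ex_A(f))$ as a sum over $\sigma_F$, split $\sigma_F = (\sigma_A, \sigma_{F\setminus A})$, and apply the chain rule $\mu_F^\eta(\sigma_A,\sigma_{F\setminus A}) = \mu_{F\setminus A}^\eta(\sigma_{F\setminus A})\,\mu_A^{(\sigma_{F\setminus A},\eta_{T\setminus F})}(\sigma_A)$. The inner sum over $\sigma_A$ then collapses back to $\Ex_F^\eta(f)$ by Bayes. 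Part~(2) follows immediately from part~(1): write $\Var_F^\eta(f) = \Ex_F^\eta(f^2) - (\Ex_F^\eta f)^2$, apply part~(1) to both $f$ and $f^2$, and substitute the pointwise identity $\Ex_A(f^2) = \Var_A(f) + (\Ex_A f)^2$; what remains after rearrangement is exactly $\Ex_F^\eta(\Var_A f) + \Var_F^\eta(\Ex_A f)$.

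Part~(3) is the only step that uses geometry. The hypothesis $A \cap B = \emptyset$ together with the absence of edges between $A$ and $B$ gives $\partial A \subseteq T\setminus(A\cup B)$ and $\partial B \subseteq T\setminus(A\cup B)$; by the spatial Markov property of the Gibbs distribution, the conditional law of $(\sigma_A,\sigma_B)$ given $\eta_{T\setminus(A\cup B)}$ factors as the product $\mu_A(\cdot\mid\eta_{T\setminus(A\cup B)}) \otimes \mu_B(\cdot\mid\eta_{T\setminus(A\cup B)})$, and moreover the marginals $\mu_A^\eta$ and $\mu_B^\eta$ already coincide with these two factors (they do not depend on $\eta_B$ and $\eta_A$ respectively). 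Writing $\lambda = \eta_{T\setminus(A\cup B)}$ and $g(\sigma_A,\sigma_B) := f(\sigma_A,\sigma_B,\lambda)$, both sides of the inequality become integrals under this product measure. I would then perform the standard orthogonal (ANOVA) decomposition $g = \bar g + h_A(\sigma_A) + h_B(\sigma_B) + r(\sigma_A,\sigma_B)$ characterised by $\Ex_A h_A = \Ex_B h_B = 0$ and $\Ex_A r(\cdot,\sigma_B) = \Ex_B r(\sigma_A,\cdot) = 0$. A direct computation using these orthogonality relations yields $\Var_A[\Ex_B g] = \Var(h_A)$ and $\Ex_B[\Var_A g] = \Var(h_A) + \Var(r)$, so the difference is $\Var(r) \ge 0$, which is the claim.

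The main (and essentially only) subtlety lies in part~(3): one must be careful that $\Ex_B(f)$ and $\Var_A(f)$, viewed as functions on $T\setminus B$ and $T\setminus A$ respectively, factor through $\lambda = \eta_{T\setminus(A\cup B)}$ with no residual dependence on the opposite block's configuration. The no-edge hypothesis is exactly what secures this reduction to a product measure; without it the interaction term $r$ could acquire either sign and the inequality would fail. Parts~(1) and~(2), by contrast, are purely definitional and require no spatial assumption.
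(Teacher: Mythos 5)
Your proposal is correct, but it is worth noting that the paper does not actually prove this lemma: it defers all three parts to~\cite{MSW04} and only illustrates part~1 by expanding the definitions, which is exactly your chain-rule computation; your part~2 is then the standard consequence of part~1 applied to $f$ and $f^2$. The genuinely substantive content is your part~3, and it is sound: disjointness plus the no-edge hypothesis ensure that the conditional measures on $A$ and on $B$ depend on the outside configuration only through $\lambda=\eta_{T\setminus(A\cup B)}$, so both sides reduce to the product measure $\mu_A^\lambda\otimes\mu_B^\lambda$ acting on $g(\sigma_A,\sigma_B)=f(\sigma_A,\sigma_B,\lambda)$, and your ANOVA computation is right --- the cross term $\Ex_A[h_A\,r(\cdot,\sigma_B)]$ does not vanish pointwise in $\sigma_B$, but it vanishes after averaging over $B$, which is all you need. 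Two small remarks. First, your closing sentence, taken literally, overstates the reduction: $\Ex_B(f)$ \emph{does} depend on $\sigma_A$ (through $f$ itself; otherwise the left-hand side would be identically zero) --- what is independent of $\sigma_A$ is the conditioning measure $\mu_B^{(\sigma_A,\lambda)}=\mu_B^\lambda$, and your actual computation uses exactly this, so the issue is only one of phrasing. Second, a shorter route to the product-measure inequality, more in the spirit of the name ``convexity of variance,'' is Jensen/Cauchy--Schwarz: writing $u=g-\Ex_A g$, one has $\Var_A(\Ex_B g)=\Ex_A\bigl[(\Ex_B u)^2\bigr]\le \Ex_A\Ex_B[u^2]=\Ex_B[\Var_A g]$, which avoids introducing the orthogonal decomposition altogether; both arguments are equally valid.
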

For proofs of these facts see, e.g.,~\cite{MSW04} and the references therein, but for example note that part 1 follows directly from the definitions:
$$
\Ex_F^\eta(\Ex_A(f)) = \sum_{\xi \in [q]^{F \setminus A}} \mu_F^\eta(\xi) \Ex_A(f)(\xi) = \sum_{\xi \in [q]^{F \setminus A}} \sum_{\gamma \in [q]^{A}} \mu_F^\eta(\xi) \mu_F^\eta(\gamma\mid\xi)f(\xi,\gamma) 
= \Ex_F^\eta(f).
$$

The Dirichlet form of the heat-bath block dynamics satisfies
\begin{equation}\label{eq:calE}
\mathcal{E}_{\Ptb}(f,f) = \frac{1}{\ell+1} \cdot \sum_{j=1}^{\ell+1} \Ex[\Var_{T_j^\ell}(f)];
\end{equation}
see, e.g., Fact 3.3 in~\cite{BCSV}.
We present next two key lemmas. 

\begin{lemma}\label{lem:dirich-bound}
	For every function $f: \Omega \to \R$ we have
	\[
	\sum_{j=1}^{\ell+1} \Ex[\Var_{T_j^\ell}(f)] \ge \sum_{i=1}^{h+1} \Ex[\Var_{B_i^\ell}(\Ex_{F_{i-\ell-1}}(f))].
	\]
\end{lemma}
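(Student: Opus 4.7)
The plan is to prove the inequality term-by-term for each $j \in \{1,\ldots,\ell+1\}$ and then sum. Fix $j$ and write $T_j^\ell = \bigsqcup_{k=0}^{r-1} B^{(k)}$ with $B^{(k)} := B_{j+k(\ell+1)}^\ell$. Consecutive blocks $B^{(k)}, B^{(k+1)}$ are separated by the level $L_{j+k(\ell+1)+1} \notin T_j^\ell$, so the blocks lie at pairwise graph distance at least $2$. Consequently, for any boundary $\eta$ on $T \setminus T_j^\ell$, the conditional Gibbs measure $\mu_{T_j^\ell}^\eta$ factors as the product $\bigotimes_k \mu_{B^{(k)}}^\eta$.

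The first step will be to iterate the law of total variance (Lemma~\ref{lemma:e-var}(2)), peeling off one block at a time from the bottom. Set $E_{k-1} := B^{(0)} \cup \cdots \cup B^{(k-1)}$ (with $E_{-1}=\emptyset$) and $V_k := B^{(k)} \cup \cdots \cup B^{(r-1)}$. The product structure implies two useful simplifications at each step: $\Var_{V_k}[\Ex_{B^{(k)}}(\cdot)] = \Var_{V_{k+1}}[\Ex_{B^{(k)}}(\cdot)]$ (since $\Ex_{B^{(k)}}(\cdot)$ does not depend on $\sigma_{B^{(k)}}$ and the blocks in $V_{k+1}$ are non-adjacent to $B^{(k)}$), and $\Ex_{B^{(k)}}\Ex_{E_{k-1}}(f) = \Ex_{E_k}(f)$ (by the Markov property, since $B^{(k)}$ is non-adjacent to $E_{k-1}$). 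Iterating and then taking the outer expectation should yield the identity
\[
\Ex[\Var_{T_j^\ell}(f)] = \sum_{k=0}^{r-1} \Ex\!\left[\Var_{B^{(k)}}\!\big(\Ex_{E_{k-1}}(f)\big)\right].
\]

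The second step will upgrade each term by replacing $E_{k-1}$ with the larger set $F_{i-\ell-1}$, where $i := j+k(\ell+1)$. Note that $B^{(k)}$ starts at level $L_{j+(k-1)(\ell+1)+2}$ while $F_{i-\ell-1}$ ends at level $L_{j+(k-1)(\ell+1)}$, so these sets are disjoint with a one-level gap between them (hence non-adjacent). Because $E_{k-1} \subseteq F_{i-\ell-1}$, the tower rule gives $\Ex_{F_{i-\ell-1}}(f) = \Ex_{F_{i-\ell-1}}\!\big(\Ex_{E_{k-1}}(f)\big)$. Applying the convexity-of-variance inequality (Lemma~\ref{lemma:e-var}(3)) to $g := \Ex_{E_{k-1}}(f)$ with $A = B^{(k)}$ and $B = F_{i-\ell-1}$, and then taking outer expectations, will give
\[
\Ex\!\left[\Var_{B^{(k)}}\!\big(\Ex_{F_{i-\ell-1}}(f)\big)\right] \le \Ex\!\left[\Var_{B^{(k)}}\!\big(\Ex_{E_{k-1}}(f)\big)\right].
\]
The $k=0$ case is degenerate: since $j \le \ell+1$, both $E_{-1}$ and $F_{j-\ell-1}$ are empty and both sides coincide with $\Ex[\Var_{B^{(0)}}(f)]$.

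Combining these steps yields the per-$j$ bound $\Ex[\Var_{T_j^\ell}(f)] \ge \sum_{k=0}^{r-1} \Ex[\Var_{B^{(k)}}(\Ex_{F_{i-\ell-1}}(f))]$. Summing over $j \in \{1,\ldots,\ell+1\}$ and observing that the pairs $(j,k)$ with $1 \le j+k(\ell+1) \le h+1$ enumerate each $i \in \{1,\ldots,h+1\}$ exactly once---while any remaining pairs (corresponding to partial top blocks with $i > h+1$) contribute only nonnegative terms on the LHS---will complete the proof. I expect the main technical care to be needed in the first step, where the product-structure identities must be verified along each iteration; the second step and the final summation are routine by comparison.
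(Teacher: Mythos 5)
Your proposal is correct and follows essentially the same route as the paper: decompose $T_j^\ell$ into the blocks $B_{j+k(\ell+1)}^\ell$, telescope via the law of total variance, use convexity of variance (justified by the one-level gap between $B_i^\ell$ and $F_{i-\ell-1}$) to replace $\Ex_{E_{k-1}}$ by $\Ex_{F_{i-\ell-1}}$, and sum over $j$. The only cosmetic difference is that you assert an exact product-structure identity in the telescoping step, whereas the paper simply drops the extra nonnegative variance term and keeps an inequality, which suffices.
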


\begin{lemma}\label{lem:var-bound}
	If $\mu = \mu_T^\tau$ satisfies $\PVM(\ell,\eps)$ for $\eps = \frac{1-\delta}{2(\ell+1)}$, then for every function $f: \Omega \to \R$ we have
	\[
	\frac{2}{\delta} \cdot \sum_{i=1}^{h+1} \Ex[\Var_{B_i^\ell}(\Ex_{F_{i-\ell-1}}(f))] \ge\Var(f).
	\]
\end{lemma}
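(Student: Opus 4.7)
The plan is to combine the martingale decomposition of $\Var(f)$ along the nested sequence $\emptyset = F_0 \subset F_1 \subset \cdots \subset F_{h+1} = T$ with a per-level application of the PVM condition. Iterating the law of total variance (\cref{lemma:e-var}) gives the orthogonal expansion
\[
\Var(f) \;=\; \sum_{i=1}^{h+1} \Ex\!\left[\Var_{F_i}\!\left[\Ex_{F_{i-1}}(f)\right]\right].
\]
For each $i$, introduce $\tilde f_i := \Ex_{F_{i-\ell-1}}(f)$; by the tower property $\Ex_{F_{i-1}}(f) = \Ex_{F_{i-1}}(\tilde f_i)$, and within $F_i$ (with any fixed boundary $\eta$ on $T\setminus F_i$) the function $\tilde f_i$ depends only on the configuration on $F_i \setminus F_{i-\ell-1} = L_{i-\ell} \cup B_i^\ell$.

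The crucial step is the decomposition $\tilde f_i = \tilde f_{i,1} + \tilde f_{i,2}$ where $\tilde f_{i,1} := \Ex_{B_i^\ell}(\tilde f_i)$ and $\tilde f_{i,2} := \tilde f_i - \tilde f_{i,1}$. Within $F_i$ with boundary $\eta$, the first piece $\tilde f_{i,1}$ is a function of $\sigma_{L_{i-\ell}}$ only and is therefore \emph{independent of $B_i^\ell$}, while $\Ex_{B_i^\ell}(\tilde f_{i,2}) = 0$. A short calculation with the law of total expectation yields both the Pythagorean identity $\Var_{F_i}^\eta(\tilde f_i) = \Var_{F_i}^\eta(\tilde f_{i,1}) + \Var_{F_i}^\eta(\tilde f_{i,2})$ and $\Var_{F_i}^\eta(\tilde f_{i,2}) = \Ex_{F_i}^\eta[\Var_{B_i^\ell}(\tilde f_i)]$. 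PVM then applies directly to $\tilde f_{i,1}$, yielding $\Var_{F_i}^\eta[\Ex_{F_{i-1}}(\tilde f_{i,1})] \le \eps\, \Var_{F_i}^\eta(\tilde f_{i,1}) \le \eps\, \Var_{F_i}^\eta(\tilde f_i)$; for $\tilde f_{i,2}$, the Jensen bound gives $\Var_{F_i}^\eta[\Ex_{F_{i-1}}(\tilde f_{i,2})] \le \Var_{F_i}^\eta(\tilde f_{i,2}) = \Ex_{F_i}^\eta[\Var_{B_i^\ell}(\tilde f_i)]$. Combining these via $\Var(X+Y) \le 2\Var(X)+2\Var(Y)$ and taking $\mu$-expectation,
\[
\Ex\!\left[\Var_{F_i}\!\left[\Ex_{F_{i-1}}(f)\right]\right] \;\le\; 2\eps\, \Ex[\Var_{F_i}(\tilde f_i)] + 2\,\Ex[\Var_{B_i^\ell}(\tilde f_i)].
\]

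Summing over $i$ and applying the $(\ell+1)$-overcounting identity $\sum_{i=1}^{h+1}\Ex[\Var_{F_i}(\tilde f_i)] \le (\ell+1)\Var(f)$---which follows by expanding $\Ex[\Var_{F_i}(\tilde f_i)] = \sum_{j=\max(1,i-\ell)}^{i}\Ex[\Var_{F_j}[\Ex_{F_{j-1}}(f)]]$ through the same martingale orthogonality and noting that each $j$ is counted in at most $\ell+1$ values of $i$---gives
\[
\Var(f) \;\le\; 2\eps(\ell+1)\Var(f) \,+\, 2\sum_{i=1}^{h+1}\Ex[\Var_{B_i^\ell}(\tilde f_i)] \;=\; (1-\delta)\Var(f) \,+\, 2\sum_{i=1}^{h+1}\Ex[\Var_{B_i^\ell}(\tilde f_i)],
\]
after substituting $\eps = (1-\delta)/(2(\ell+1))$; rearranging yields the lemma. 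The main subtlety is identifying the right function to feed into PVM: $\tilde f_i$ itself is not $B_i^\ell$-independent, but its conditional expectation $\Ex_{B_i^\ell}(\tilde f_i)$ is, and the orthogonal remainder contributes precisely the local variance $\Ex[\Var_{B_i^\ell}(\tilde f_i)]$ that appears on the RHS of the lemma. Balancing the PVM loss against the $(\ell+1)$-fold overcounting is what produces the constant $2/\delta$.
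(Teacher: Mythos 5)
Your proof is correct, and it reaches the stated constant $2/\delta$ exactly. The global skeleton coincides with the paper's: both start from the telescoping identity $\Var(f)=\sum_{i=1}^{h+1}\Ex[\Var_{F_i}(\Ex_{F_{i-1}}(f))]$, both apply PVM once per level to the function $\tilde f_i=\Ex_{F_{i-\ell-1}}(f)$, and both absorb the resulting ``lower-level'' variance terms by the same overcounting argument (each increment $\Ex[\Var_{F_j}(\Ex_{F_{j-1}}(f))]$ is reused at most $\ell$, resp.\ $\ell+1$, times). Where you genuinely diverge is the per-level step. The paper does not re-derive anything from PVM directly: it invokes \cref{lem:PVM-varbound}, i.e.\ Lemma 3.5(i) of [MSW04], which converts the hypothesis $\Var_{F_i}^\eta[\Ex_{F_{i-1}}(g)]\le\eps\Var_{F_i}^\eta(g)$ (for $g$ independent of $B_i^\ell$) into the two-term bound with constants $\tfrac{2(1-\eps)}{1-2\eps}$ and $\tfrac{2\eps}{1-2\eps}$, applied with $A'=F_{i-\ell-1}$; the leftover term $\Ex[\Var_{F_{i-1}}(\Ex_{F_{i-\ell-1}}(f))]$ is then telescoped with weight $\ell$. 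You instead prove the per-level inequality from scratch: split $\tilde f_i=\Ex_{B_i^\ell}(\tilde f_i)+(\tilde f_i-\Ex_{B_i^\ell}(\tilde f_i))$, note the first piece is $B_i^\ell$-independent so PVM applies to it, bound the second piece by Jensen and identify its variance with $\Ex_{F_i}^\eta[\Var_{B_i^\ell}(\tilde f_i)]$, and combine with the crude $\Var(X+Y)\le 2\Var(X)+2\Var(Y)$, before telescoping the full $\Ex[\Var_{F_i}(\tilde f_i)]$ with weight $\ell+1$. Your version is self-contained (no appeal to the MSW factorization lemma, hence no need to track its $\eps<1/2$ hypothesis separately, which is in any case automatic here), at the price of a cruder constant in the per-level splitting; the paper's route gets slightly sharper per-level constants for free by citing [MSW04], and its Lemma~\ref{lem:PVM-varbound} is reused verbatim in the entropy argument of Section~\ref{subsec:em}, which is presumably why the authors organized it that way. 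All the identifications you use implicitly (e.g.\ that $\Ex_{F_{i-1}}(g)$ restricted to $\Omega_{F_i}^\eta$ is the projection $g_{L_i}$ appearing in \cref{def:pvm}, and that $\tilde f_{i,1}$ depends only on $\sigma_{L_{i-\ell}}$ inside $F_i$) are sound and are the same ones the paper relies on.
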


The proof of \cref{thm:PVM-gap} follows straightforwardly from these two facts.
\begin{proof}[Proof of \cref{thm:PVM-gap}]
	Lemmas \ref{lem:dirich-bound} and \ref{lem:var-bound} combined with \eqref{eq:calE} imply:	
	$$
	\mathcal{E}_{\Ptb}(f,f) \ge \frac{\delta}{2(\ell+1)} \Var(f).
	$$
	The result then follows from~\eqref{eq:sw:gap}.
\end{proof}

We provide next the proof of \cref{lem:dirich-bound}, which does not use PVM 
and exploits instead the recursive structure of the $d$-ary tree.

\begin{proof}[Proof of \cref{lem:dirich-bound}]
	Fix $\ell$ and $j$, 
	and recall the definition of the blocks $T_j^\ell$ and $B_{i}^\ell$; see~\eqref{eq:tj},~\eqref{eq:bi} and Figures~\ref{fig:tiling}(b) and~\ref{fig:tiling}(c).
	Observe that $T_j^\ell$ is the union of $B_{i}^\ell$'s for certain sequence of $i$'s. 
	Specifically,
	\[
	T_j^\ell = \bigcup_{k=0}^m B_{i(k)}^\ell.
	\]
	where $i(k) = i_j^\ell(k) = k(\ell+1)+j$ for $k=0,1,\dots,m$ and 
	$m=m_j^\ell$ is the smallest positive integer such that $k(\ell+1)+j \ge h+1$. 
	
	For $0\le k \le m$ we define
	\[
	S_k = (S_j^\ell)_k = \bigcup_{r=0}^k B_{i(r)}^\ell.
	\]
	Note that $S_m = T_j^\ell$, $S_0 = B_j^\ell$, $S_k = S_{k-1} \cup B_{i(k)}^\ell$, and $S_k \subseteq F_{i(k)}$.
	Then, for any $\eta \in \Omega$ the law of total variance (see \cref{lemma:e-var})  implies:
	$$
	\Var_{S_k}^\eta(f) = \Ex_{S_k}^\eta[\Var_{S_{k-1}}(f)] + \Var_{S_k}^\eta[\Ex_{S_{k-1}}(f)].
	$$
	Averaging over $\eta$ and using the law of total expectation (see \cref{lemma:e-var}) we deduce
	$$
	\Ex[\Var_{S_k}(f)] = \Ex[\Var_{S_{k-1}}(f)] + \Ex[\Var_{S_k}(\Ex_{S_{k-1}}(f))].
	$$
	Similarly, we deduce 
	$$
	\Ex[\Var_{S_k}(\Ex_{S_{k-1}}(f))] = \Ex\left[ \Var_{B_{i(k)}^\ell}(\Ex_{S_{k-1}}(f)) \right] +
	\Ex\left[ \Var_{S_k} \left( \Ex_{B_{i(k)}^\ell}(\Ex_{S_{k-1}}(f)) \right) \right],
	$$
	and so
	\begin{align}
	\Ex[\Var_{S_k}(f)]
	&= \Ex[\Var_{S_{k-1}}(f)] +
	\Ex\left[ \Var_{B_{i(k)}^\ell}(\Ex_{S_{k-1}}(f)) \right] +
	\Ex\left[ \Var_{S_k} \left( \Ex_{B_{i(k)}^\ell}(\Ex_{S_{k-1}}(f)) \right) \right] \notag\\
	&\ge \Ex[\Var_{S_{k-1}}(f)] +
	\Ex\left[ \Var_{B_{i(k)}^\ell}(\Ex_{S_{k-1}}(f)) \right]. \label{eq:var-bound} 
	\end{align}
	The sets $B_{i(k)}^\ell$ and $F_{i(k-1)}$ are at distance $2$ from each other,
	so the convexity of variance from \cref{lemma:e-var}
	implies that 
	\begin{align*}
	\Ex\left[ \Var_{B_{i(k)}^\ell} (\Ex_{S_{k-1}}(f)) \right] 
	&= \Ex\left[  \Ex_{F_{i(k)-\ell-1}}\left[ \Var_{B_{i(k)}^\ell} (\Ex_{S_{k-1}}(f)) \right] \right]\\
	&\ge \Ex\left[ \Var_{B_{i(k)}^\ell} \left( \Ex_{F_{i(k)-\ell-1}}(\Ex_{S_{k-1}}(f)) \right) \right]
	= \Ex\left[ \Var_{B_{i(k)}^\ell} (\Ex_{F_{i(k)-\ell-1}}(f)) \right].
	\end{align*}
	Plugging this bound into~\eqref{eq:var-bound} we obtain for any integer $k \ge 1$ that
	\begin{equation}\label{eq:fb}
	\Ex[\Var_{S_k}(f)] 
	\ge \Ex[\Var_{S_{k-1}}(f)] +
	\Ex\left[ \Var_{B_{i(k)}^\ell}(\Ex_{F_{i(k)-\ell-1}}(f))  \right].
	\end{equation}
	When $k=0$ we let $S_{-1} = \emptyset$, and it is straightforward to check that everything above still holds trivially.	
	Since $S_m = T_j^\ell$, we derive from~\eqref{eq:fb} that
	\begin{align*}
	\Ex[\Var_{T_j^\ell}(f)]
	&= \Ex[\Var_{S_m}(f)] - \Ex[\Var_{S_{-1}}(f)]\\
	&= \sum_{k=0}^{m} \Ex[\Var_{S_k}(f)] - \Ex[\Var_{S_{k-1}}(f)]\\
	&\ge \sum_{k=0}^{m} \Ex\left[ \Var_{B_{i(k)}^\ell}(\Ex_{F_{i(k)-\ell-1}}(f)) \right].
	\end{align*}
	Hence, 
	summing over $j$
	\begin{align*}
	\sum_{j=1}^{\ell+1}\Ex[\Var_{T_j^\ell}(f)]
	\ge 
	\sum_{i=1}^{h+1} \Ex\left[ \Var_{B_{i}^\ell}(\Ex_{F_{i-\ell-1}}(f)) \right],
	\end{align*}
	as claimed.
\end{proof}

%
%

It remains for us to establish \cref{lem:var-bound}.
The following lemma will be helpful.
\begin{lemma}\label{lem:PVM-varbound}
	Let $F = A\cup B \subseteq T$ and $\eta \in \Omega$. 
	Suppose that for every function $g:\Omega_F^\eta \to \R$ that is independent of $B$, we have $\Var_F^\eta[\Ex_A(g)] \le \eps \cdot \Var_F^\eta(g)$ for some constant $\eps \in (0,1/2)$.
	Then for every function $f:\Omega_F^\eta \to \R$ we have
	\begin{equation}
	\label{eq:sl}
	\Var_F^\eta[\Ex_A(f)] \le
	\frac{2(1-\eps)}{1-2\eps} \cdot \Ex_F^\eta[\Var_B(f)] +
	\frac{2\eps}{1-2\eps} \cdot \Ex_F^\eta[\Var_A(f)];
	\end{equation}
	In addition for $A' \subseteq A$ we have
	\[
	\Var_F^\eta[\Ex_A(f)] \le
	\frac{2(1-\eps)}{1-2\eps} \cdot \Ex_F^\eta[\Var_B(\Ex_{A'}(f))] +
	\frac{2\eps}{1-2\eps} \cdot \Ex_F^\eta[\Var_A(\Ex_{A'}(f))].
	\]
\end{lemma}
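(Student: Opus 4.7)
The plan is to reduce the general inequality to the hypothesized $B$-independent case by splitting $f = \Ex_B(f) + (f - \Ex_B(f))$. The first summand $g := \Ex_B(f)$ depends only on $\sigma_{T\setminus B}$, hence is independent of $B$ and admissible as a test function for the hypothesis; the residual $f - \Ex_B(f)$ is mean-zero given $\sigma_{T\setminus B}$, and its $\mu_F^\eta$-variance equals exactly $\Ex_F^\eta[\Var_B(f)]$, which is the term we need to charge.

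Concretely, by linearity of $\Ex_A$,
\[
\Ex_A(f) - \Ex_F^\eta(f) = \bigl(\Ex_A(\Ex_B(f)) - \Ex_F^\eta(f)\bigr) + \Ex_A\bigl(f - \Ex_B(f)\bigr),
\]
where both summands are mean-zero under $\mu_F^\eta$. The elementary $L_2$ bound $(u+v)^2 \le 2u^2 + 2v^2$ then yields
\[
\Var_F^\eta[\Ex_A(f)] \le 2\,\Var_F^\eta[\Ex_A(\Ex_B(f))] + 2\,\Var_F^\eta[\Ex_A(f - \Ex_B(f))].
\]
For the first term, the hypothesis applied to $g = \Ex_B(f)$ combined with the law of total variance $\Var_F^\eta[\Ex_B(f)] = \Var_F^\eta(f) - \Ex_F^\eta[\Var_B(f)]$ bounds it by $\eps\bigl(\Var_F^\eta(f) - \Ex_F^\eta[\Var_B(f)]\bigr)$. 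For the second term, the contraction $\Var_F^\eta[\Ex_A(h)] \le \Var_F^\eta(h)$ (again a consequence of the law of total variance) together with the mean-zero property of $h := f - \Ex_B(f)$ yields $\Var_F^\eta[\Ex_A(h)] \le \Ex_F^\eta[h^2] = \Ex_F^\eta[\Var_B(f)]$. Combining,
\[
\Var_F^\eta[\Ex_A(f)] \le 2\eps\,\Var_F^\eta(f) + 2(1-\eps)\,\Ex_F^\eta[\Var_B(f)].
\]
A second use of the law of total variance, $\Var_F^\eta(f) = \Var_F^\eta[\Ex_A(f)] + \Ex_F^\eta[\Var_A(f)]$, then absorbs the $\Var_F^\eta[\Ex_A(f)]$ appearing on the right back to the left-hand side, and dividing by $1 - 2\eps > 0$ (which requires $\eps < 1/2$) produces \eqref{eq:sl}.

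For the refinement with $A' \subseteq A$, I would apply the inequality just proved to $\Ex_{A'}(f)$ in place of $f$: the tower property $\Ex_A(\Ex_{A'}(f)) = \Ex_A(f)$ keeps the left-hand side untouched, while the right-hand side automatically acquires the inner $\Ex_{A'}$ inside each conditional variance, exactly as claimed. The main obstacle I anticipate is the absorption step: one is tempted to try a cleaner reduction using $\Ex_A(\Ex_B(f)) = \Ex_F^\eta(f)$, but that identity fails in general, since $\Ex_A(\Ex_B(f))$ remains a nontrivial function of $\sigma_{B\setminus A}$ (which is precisely what makes the hypothesis non-vacuous). This is what forces the $(u+v)^2 \le 2u^2 + 2v^2$ split, the restriction $\eps < 1/2$, and the constants $\frac{2(1-\eps)}{1-2\eps}$ and $\frac{2\eps}{1-2\eps}$ appearing in the conclusion.
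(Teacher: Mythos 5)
Your proof is correct. The decomposition $f = \Ex_B(f) + (f - \Ex_B(f))$, the factor-two bound $(u+v)^2 \le 2u^2 + 2v^2$, the application of the hypothesis to the admissible test function $g = \Ex_B(f)$, and the final absorption via the law of total variance (requiring $\eps < 1/2$) recover \eqref{eq:sl} with exactly the stated constants; this is precisely the step the paper does not spell out but delegates to the proof of Lemma 3.5 in \cite{MSW04}, so you have supplied a correct, self-contained version of that cited argument. Your handling of the $A' \subseteq A$ refinement---replacing $f$ by $\Ex_{A'}(f)$ and using $\Ex_A(\Ex_{A'}(f)) = \Ex_A(f)$---coincides with the paper's own proof.
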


\begin{proof}
	The first part was established in the proof of Lemma 3.5 from \cite{MSW04}.
	For the second part, 
	note that the law of total expectation from \cref{lemma:e-var},
	$\Var_F^\eta[\Ex_A(f)] = \Var_F^\eta[\Ex_A(\Ex_{A'}(f))]$.
	Replacing $f$ by $\Ex_{A'}(f)$ in~\eqref{eq:sl} yields the result.
\end{proof}

We present next the proof of \cref{lem:var-bound}. 

\begin{proof}[Proof of \cref{lem:var-bound}]
	By the law of total variance (\cref{lemma:e-var}), we deduce that for each $1\le i \le h+1$ and $\eta \in \Omega$,
	\begin{equation*}
	\Var_{F_i}^\eta(f)
	= \Ex_{F_i}^\eta[\Var_{F_{i-1}}(f)] + \Var_{F_i}^\eta(\Ex_{F_{i-1}}(f)).
	\end{equation*}
	Taking expectations we obtain:
	\begin{equation}
	\Ex[\Var_{F_i}(f)]
	= \Ex[\Var_{F_{i-1}}(f)] + \Ex[\Var_{F_i}(\Ex_{F_{i-1}}(f))].
	\label{eq:telescope}
	\end{equation}
	Recall that $F_0 = \emptyset$ and $F_{h+1} = T$. Then,
	\begin{align}
	\Var(f) &= \Ex[\Var_{F_{h+1}}(f)] - \Ex[\Var_{F_0}(f)] \nonumber\\
	&= \sum_{i=1}^{h+1} \Ex[\Var_{F_i}(f)] - \Ex[\Var_{F_{i-1}}(f)] \nonumber\\
	&= \sum_{i=1}^{h+1} \Ex[\Var_{F_i}(\Ex_{F_{i-1}}(f))]. \label{eq:var(f)}
	\end{align}
	Now, since $\mu$ satisfies $\PVM(\ell,\eps)$, it follows from \cref{lem:PVM-varbound} (with $F = F_i$, $A = F_{i-1}$, $B = B_i^\ell$, and $A' = F_{i-\ell-1}$ and taking expectation on both sides) that for every $1\le i \le h+1$,
	\begin{equation}\label{eq:lemma-bound}
	\Ex[\Var_{F_i}(\Ex_{F_{i-1}}(f))]
	\le
	\frac{2(1-\eps)}{1-2\eps} \cdot \Ex[\Var_{B_i^\ell}(\Ex_{F_{i-\ell-1}}(f))] +
	\frac{2\eps}{1-2\eps} \cdot \Ex[\Var_{F_{i-1}}(\Ex_{F_{i-\ell-1}}(f))].
	\end{equation}
	Let $g = \Ex_{F_{i-\ell-1}}(f)$ and observe that $\Var_{F_{i-\ell-1}}^\eta(g) = 0$ for all $\eta \in \Omega$ as $g$ is independent of $F_{i-\ell-1}$.
	Then,
	\begin{align*}
	\Ex[\Var_{F_{i-1}}(\Ex_{F_{i-\ell-1}}(f))]
	= \Ex[\Var_{F_{i-1}}(g)] - \Ex[\Var_{F_{i-\ell-1}}(g)]
	= \sum_{j=i-\ell}^{i-1} \Ex[\Var_{F_j}(g)] - \Ex[\Var_{F_{j-1}}(g)].
	\end{align*}
	By \eqref{eq:telescope} (which holds trivially for $i\le 0$ as well) and the law of total expectation we deduce that
	\begin{align}
	\label{eq:msw01}
	\Ex[\Var_{F_{i-1}}(\Ex_{F_{i-\ell-1}}(f))]
	= \sum_{j=i-\ell}^{i-1} \Ex[\Var_{F_j}(\Ex_{F_{j-1}}(g))]
	= \sum_{j=i-\ell}^{i-1} \Ex[\Var_{F_j}(\Ex_{F_{j-1}}(f))].
	\end{align}	
	Therefore, we get from \eqref{eq:msw01} and \eqref{eq:var(f)} that
	\begin{align}
	\sum_{i=1}^{h+1} \Ex[\Var_{F_{i-1}}(\Ex_{F_{i-\ell-1}}(f))]
	&= \sum_{i=1}^{h+1} \sum_{j=i-\ell}^{i-1} \Ex[\Var_{F_j}(\Ex_{F_{j-1}}(f))] \notag\\
	&\le \ell \cdot \sum_{i=1}^{h+1} \Ex[\Var_{F_i}(\Ex_{F_{i-1}}(f))] 
	= \ell \cdot \Var(f).
	\label{eq:mu-var-mu}
	\end{align}
	Combining \eqref{eq:var(f)}, \eqref{eq:lemma-bound} and \eqref{eq:mu-var-mu} we get
	\begin{align*}
	\Var(f) &= \sum_{i=1}^{h+1} \Ex[\Var_{F_i}(\Ex_{F_{i-1}}(f))]\\
	&\le \frac{2(1-\eps)}{1-2\eps} \cdot \sum_{i=1}^{h+1} \Ex[\Var_{B_i^\ell}(\Ex_{F_{i-\ell-1}}(f))]
	+ \frac{2\eps}{1-2\eps} \cdot \sum_{i=1}^{h+1} \Ex[\Var_{F_{i-1}}(\Ex_{F_{i-\ell-1}}(f))]\\
	&\le \frac{2(1-\eps)}{1-2\eps} \cdot \sum_{i=1}^{h+1} \Ex[\Var_{B_i^\ell}(\Ex_{F_{i-\ell-1}}(f))]
	+ \frac{2\eps}{1-2\eps} \cdot \ell \cdot \Var(f).
	\end{align*}
	We then conclude that
	\[
	\sum_{i=1}^{h+1} \Ex[\Var_{B_i^\ell}(\Ex_{F_{i-\ell-1}}(f))] \ge \frac{1-2\eps(\ell+1)}{2(1-\eps)} \cdot \Var(f) \ge \frac{\delta}{2} \cdot \Var(f). \qedhere
	\]
\end{proof}

\section{Entropy Mixing implies fast mixing: Proof of \texorpdfstring{\cref{thm:entropy-main}}{Theorem 2}}
\label{sec:ent-proof}

Let $E \subseteq T$ denote the set of all \emph{even} vertices of the tree $T$, where a vertex is called even if its distance to the leaves is even; let $O = T\setminus E$ be the set of all odd vertices. 
We show that EM (i.e., entropy mixing) as defined in~\cite{MSW04} implies 
a factorization of entropy into even and odd subsets of vertices. 
This even-odd factorization was recently shown  to imply $O(\log n)$ mixing
of the SW dynamics on bipartite graphs~\cite{BCPSVstoc}.

We start with the definition of EM, which is the analog of the $\VM$ condition for entropy.
Let $\tau$ be a fixed boundary condition and again set
$\mu := \mu_T^\tau$ and $\Omega := \Omega^\tau$ for ease of notation.
Recall that for $v\in T$, we use $T_v$ for the subtree of $T$ rooted at $v$. 
Recall that for $\eta \in \Omega$ and $g: \Omega_{T_v}^\eta \to \R$, we defined the function
$g_v(a) = \E_{T_v}^\eta[g \mid \sigma_v = a]$ for $a \in [q]$; see \eqref{eq:g}.

\begin{definition}[Entropy Mixing (EM)]\label{def:em}
	The Gibbs distribution $\mu = \mu_T^\tau$ satisfies $\EM(\ell,\eps)$ if for every $v\in T$, every $\eta \in \Omega$, and every function $g: \Omega_{T_v}^\eta \to \R$ that is independent of the configuration on $B(v,\ell)$, we have
	$
	\Ent_{T_v}^\eta(g_v) \le \eps \cdot \Ent_{T_v}^\eta(g).
	$
	The EM condition holds if there exist constants $\ell$ and $\eps = \eps(\ell)$ such that $\EM(\ell,\eps)$ holds.	
\end{definition}

Extending our notation from the previous section for the variance functional,
for $A \subseteq T$ 
and a function $f: \Omega \to \R_{\ge 0}$,  
we use $\Ent_A(f)$ for the conditional entropy
of $f$ w.r.t.\ $\mu$ given a spin configuration in $T \setminus A$; 
i.e., for $\xi \in \Omega$ we have
\[
(\Ent_A(f))(\xi) = \Ent_A^\xi(f) = \Ent_\mu[f \mid \sigma_{T \setminus A} = \xi_{T\setminus A}].
\]
In particular, we shall write $\Ent(f) = \Ent_T(f) = \Ent_\mu(f)$. 
Notice that $\Ent_A(f)$ can be viewed as a function from $[q]^{T \setminus A}$ to $\R_{\ge 0}$
and $\Ex[\Ent_A(f)]$ denotes its mean, averaging over the configuration on $T \setminus A$.
We state next our even-odd factorization of entropy.

\begin{theorem}\label{thm:EM-EOF}
	If there exist $\ell\in\N^+$ and $\eps \in(0,1)$ such that $\mu = \mu_T^\tau$ satisfies $\EM(\ell,\eps)$, then 
	there exists a constant $C_{\textsc{eo}} = C_{\textsc{eo}}(\ell,\eps)$ independent of $n$ such that 
	for every function $f: \Omega \to \R_{\ge 0}$ we have 
	$
	\Ent(f) \le C_{\textsc{eo}} \left( \Ex[\Ent_E(f)] + \Ex[\Ent_O(f)] \right).
	$
	
\end{theorem}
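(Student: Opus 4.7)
The plan is to mirror the architecture of Section~\ref{sec:variance} but adapted to entropy, and then consolidate the resulting block factorization into the desired even-odd one using the bipartite structure of the tree. I would introduce a \emph{Parallel Entropy Mixing} condition $\PEM(\ell,\epsilon)$ by replacing $\Var$ with $\Ent$ throughout Definition~\ref{def:pvm}, and establish the chain $\EM \Rightarrow \PEM \Rightarrow$ even-odd factorization.

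The first step, $\EM \Rightarrow \PEM$, is the entropy analog of Theorem~\ref{thm:VM=PVM}. As in its proof, the key structural fact is that $\mu_{F_i}^\eta(\sigma_{L_i} = \cdot) = \prod_{v\in L_i}\mu_{T_v}^\eta(\sigma_v = \cdot)$ is a product measure over the distinct subtrees $\{T_v\}_{v\in L_i}$, since they are pairwise at graph distance $\ge 2$. The variance case used the elementary fact that the spectral gap of a product Markov chain equals the minimum component gap; for entropy the corresponding tool is the Caputo--Parisi tensorization principle for entropy contraction, which promotes single-vertex entropy contraction (supplied by $\EM$ applied at each $v\in L_i$) into level-wide entropy contraction with an $n$-independent constant depending only on $\ell,\epsilon,q,d$. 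I expect this step to be the most delicate, since a naive tensorization of entropy contraction loses a factor of $|L_i|$ that would be fatal.

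The second step uses $\PEM$ to derive the even-odd factorization. Replacing $\Var$ with $\Ent$ and using the entropy chain rule $\Ent_F(f) = \Ex_F[\Ent_A(f)] + \Ent_F(\Ex_A(f))$ in place of the law of total variance, I would adapt the telescoping proofs of Lemmas~\ref{lem:dirich-bound},~\ref{lem:var-bound}, and~\ref{lem:PVM-varbound} to obtain a tile block factorization of entropy
\begin{equation*}
\Ent(f) \le C_1\sum_{j=1}^{\ell+1}\Ex[\Ent_{T_j^\ell}(f)].
\end{equation*}
Each tile $T_j^\ell$ is a disjoint union of subtrees $U$ of constant size $O(d^\ell)$, pairwise at distance $\ge 2$, and within each $U$ the vertex set splits bipartitely into $U\cap E$ and $U\cap O$. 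A finite-size factorization (which holds with a constant depending only on $\ell$, since each $U$ has constant support and the conditional Gibbs measure has bounded-below minimum probability) gives $\Ex[\Ent_U(f)] \le C_2(\Ex[\Ent_{U\cap E}(f)] + \Ex[\Ent_{U\cap O}(f)])$. Aggregating these contributions across all subtrees in all tilings, and using that $\mu_E(\cdot\mid\sigma_O)$ and $\mu_O(\cdot\mid\sigma_E)$ are product over individual vertices (the bipartite property of the tree: all neighbors of an even vertex lie in $O$ and vice versa), one consolidates the piece-wise entropies into $\Ex[\Ent_E(f)]$ and $\Ex[\Ent_O(f)]$, yielding the claimed bound. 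The subtle point here is that entropy tensorization over disjoint blocks naturally goes the opposite direction; one must use the chain rule together with the bipartite product structure at the $E$-$O$ cut to push the inequality through with a dimension-free constant.
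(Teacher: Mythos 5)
Your overall architecture (EM $\Rightarrow$ a parallel/level-wise statement $\Rightarrow$ tiled block entropy factorization $\Rightarrow$ even-odd factorization) is the right shape, and your final step is essentially the paper's \cref{lem:TBF-EOF}: a finite-size even-odd factorization inside each constant-size subtree $B(v,\ell)$, tensorized across the disjoint subtrees of a tile (Lemma 3.2 of \cite{CP}, or equivalently subadditivity of entropy for product measures), followed by the monotonicity $\Ex[\Ent_{T_j^\ell \cap E}(f)] \le \Ex[\Ent_E(f)]$ coming from the chain rule. The gap is in your first half, where you transplant the variance route verbatim. First, the tool you invoke for EM $\Rightarrow$ PEM is misattributed: Lemma 3.2 of \cite{CP} tensorizes block \emph{factorization} inequalities over product measures; it does not promote per-vertex entropy \emph{contraction} into level-wide entropy contraction. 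The VM $\Rightarrow$ PVM step in \cref{thm:VM=PVM} worked because variance contraction is a spectral gap statement for the product chain $Q_{L_i}$, and gaps of product chains are the minimum of the component gaps; there is no such elementary identity for entropy contraction, which is exactly why the paper does not define a PEM condition at all (see \cref{rmk:proof:cmp}).

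Second, and more fatally, even if you had PEM in hand, your plan to ``adapt \cref{lem:PVM-varbound} with $\Var$ replaced by $\Ent$'' at the level scale does not give an $n$-independent constant. The variance version of that conversion (contraction $\Rightarrow$ two-block factorization, Lemma 3.5(i) of \cite{MSW04}) is dimension-free, but the entropy version (Lemma 3.5(ii) of \cite{MSW04}) carries a factor $1/p_{\min}$, where $p_{\min}$ is the minimum conditional probability of a configuration on the \emph{conditioned} set. Applied with the conditioned set equal to a whole level $L_i$, this factor is of order $p_{\min}^{-|L_i|}$, i.e., exponentially large in $n$, and the telescoping argument collapses. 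The paper's fix is precisely to swap the order of your two operations: first apply Lemma 3.5(ii) of \cite{MSW04} at each single subtree $T_v$, where the conditioned set is the one vertex $v$ and $p_{\min} \ge \frac1q e^{-\beta(d+1)}$ is a constant, obtaining a local two-block entropy factorization; then tensorize this \emph{factorization} over $v \in L_i$ via Lemma 3.2 of \cite{CP}, using that $\mu_{F_i}^\eta$ is a product over the subtrees rooted at $L_i$ (this is \cref{lem:EM-entbound}). After that, the telescoping in \cref{lem:ent1-bound,lem:ent2-bound} and the even-odd consolidation proceed as you describe. Without this reordering (or some other dimension-free contraction-to-factorization argument for entropy, which you have not supplied), your proposal does not go through.
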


\cref{thm:entropy-main} follows immediately.

\begin{proof}[Proof of \cref{thm:entropy-main}]
	By \cref{thm:EM-EOF}, EM implies the even-odd factorization of entropy,
	and the results in \cite{BCPSVstoc} imply that the mixing time of the SW dynamics is $O(\log n)$. 
\end{proof}
Our main technical contribution in the proof~\cref{thm:entropy-main}
is thus~\cref{thm:EM-EOF}; namely, that EM implies the even-odd factorization of entropy.
To prove \cref{thm:EM-EOF}, we will first establish entropy factorization for the tiled blocks defined in \eqref{eq:tj} and \eqref{eq:bi}; see also Figures~\ref{fig:tiling}(b) and~\ref{fig:tiling}(c). 
From the tiled block factorization of entropy we then deduce the desired even-odd factorization. 
This approach is captured by the following two lemmas.

\begin{lemma}\label{lem:EM-TBF}
	If there exist $\ell\in\N^+$ and $\eps\in(0,1)$ such that $\mu = \mu_T^\tau$ satisfies $\EM(\ell,\eps)$, then 
	there exists a constant $C_{\textsc{tb}} = C_{\textsc{tb}}(\ell,\eps)$ independent of $n$ such that, for every function $f: \Omega \to \R_{\ge 0}$,
	$
	\Ent(f) \le C_{\textsc{tb}} \cdot \sum_{j = 1}^{\ell + 1} \Ex[\Ent_{T_j^\ell}(f)].
	$
\end{lemma}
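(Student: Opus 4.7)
\textbf{Proof plan for Lemma \ref{lem:EM-TBF}.}
The strategy is to mirror the pipeline used for the variance case (\cref{thm:PVM-gap} via \cref{lem:dirich-bound,lem:var-bound}), replacing variance with entropy throughout. We will upgrade the point-to-set EM condition to a parallel (set-to-set) entropy version, then combine that with a telescoping decomposition over the levels $F_i$ to extract a sum of entropies over the $B_i^\ell$ blocks, and finally lift this to a factorization over the tiled blocks $T_j^\ell$. Where the variance case used the elementary ``convexity of variance'' from \cref{lemma:e-var}(3) together with the identity $\Var_F^\eta(f) = \Ex_F^\eta[\Var_A(f)] + \Var_F^\eta[\Ex_A(f)]$, we will use the exactly analogous \emph{chain rule for entropy}
\[
\Ent_F^\eta(f) = \Ex_F^\eta[\Ent_A(f)] + \Ent_F^\eta(\Ex_A(f))
\]
together with the entropic convexity $\Ent_A^\eta[\Ex_B(f)] \le \Ex_B^\eta[\Ent_A(f)]$ when $A, B \subseteq T$ are at graph distance at least $2$ (both standard facts; see e.g.\ \cite{MSW04}). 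The main new ingredient, which has no direct variance analog, is an approximate entropy factorization in the style of Caputo--Parisi \cite{CP} that converts EM into a usable recursion.

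The first step is to introduce a \emph{Parallel Entropy Mixing} (PEM) condition, defined exactly as $\PVM(\ell,\eps)$ in \cref{def:pvm} but with $\Var$ replaced by $\Ent$, and to show EM $\Leftrightarrow$ PEM. This follows the template of \cref{thm:VM=PVM}: because the subtrees $\{T_v : v \in L_i\}$ are disjoint and at graph distance $\ge 2$, the marginal of $\mu_{F_i}^\eta$ on $L_i$ is a \emph{product} of the marginals of $\mu_{T_v}^\eta$ on $v$, and entropy tensorizes across independent coordinates. Thus an entropy contraction of factor $\eps$ at each vertex $v \in L_i$ yields the corresponding contraction for the whole level $L_i$, giving PEM from EM; the reverse direction is immediate by taking $i$ so that $L_i = \{v\}$.

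The second step is the entropy analog of \cref{lem:dirich-bound}: for every $f \ge 0$,
\[
\sum_{j=1}^{\ell+1} \Ex[\Ent_{T_j^\ell}(f)] \ge \sum_{i=1}^{h+1} \Ex[\Ent_{B_i^\ell}(\Ex_{F_{i-\ell-1}}(f))].
\]
This is pure structure of the tree and does not use EM. It is proved exactly as \cref{lem:dirich-bound}, decomposing $T_j^\ell$ as a union of the $B_{i(k)}^\ell$, applying the chain rule for entropy, and using entropic convexity across the distance-$2$ separation between $B_{i(k)}^\ell$ and $F_{i(k)-\ell-1}$ to replace $\Ex_{S_{k-1}}(f)$ by $\Ex_{F_{i(k)-\ell-1}}(f)$ inside the inner entropy.

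The third and most delicate step is the entropy analog of \cref{lem:var-bound}: PEM implies
\[
\Var(f) \;\mapsto\; \Ent(f) \;\lesssim\; \sum_{i=1}^{h+1} \Ex[\Ent_{B_i^\ell}(\Ex_{F_{i-\ell-1}}(f))],
\]
from which \cref{lem:EM-TBF} follows by combining with the previous step. The variance version relied on the simple algebraic bound in \cref{lem:PVM-varbound}, whose derivation (splitting $\Var_F^\eta[\Ex_A(f)]$ using the product structure) uses that $(1-\eps)$ and $\eps$ appear in a linear combination that can be inverted to absorb the error term. This is exactly the step that does \emph{not} port to entropy by elementary means, and is where I expect the main obstacle to lie. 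The remedy is to invoke the approximate tensorization / block factorization results of Caputo--Parisi \cite{CP}: given PEM$(\ell,\eps)$, one derives
\[
\Ex[\Ent_{F_i}(\Ex_{F_{i-1}}(f))] \le \frac{2(1-\eps)}{1-2\eps}\,\Ex[\Ent_{B_i^\ell}(\Ex_{F_{i-\ell-1}}(f))] + \frac{2\eps}{1-2\eps}\,\Ex[\Ent_{F_{i-1}}(\Ex_{F_{i-\ell-1}}(f))],
\]
which is the exact analog of \eqref{eq:lemma-bound}. Once this recursion is in hand, the telescoping through \eqref{eq:telescope}--\eqref{eq:mu-var-mu} goes through \emph{verbatim} with $\Var$ replaced by $\Ent$, since those manipulations use only the chain rule (which entropy satisfies) and the telescoping identity for $\Var(f) = \sum_i \Ex[\Ent_{F_i}(\Ex_{F_{i-1}}(f))]$ (which follows from the chain rule). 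Choosing $\eps$ small enough (still a constant depending only on the EM parameters) so that $\frac{2\eps\ell}{1-2\eps} < 1$, we can absorb the error term into the left-hand side and obtain the desired factorization with a constant $C_{\textsc{tb}}$ depending only on $\ell$ and $\eps$, which is what is required since EM allows us to choose the parameter $\ell$ sufficiently large and the corresponding $\eps = \eps(\ell)$ sufficiently small (by iterating EM if necessary, analogous to the step-up used for VM).
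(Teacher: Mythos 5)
Your overall architecture is right (and your step~2, the entropy analog of \cref{lem:dirich-bound}, is exactly the paper's \cref{lem:ent1-bound}), but the crux of your plan has a genuine gap: step~3. You assert, ``by Caputo--Parisi,'' the inequality $\Ex[\Ent_{F_i}(\Ex_{F_{i-1}}(f))] \le \frac{2(1-\eps)}{1-2\eps}\Ex[\Ent_{B_i^\ell}(\Ex_{F_{i-\ell-1}}(f))] + \frac{2\eps}{1-2\eps}\Ex[\Ent_{F_{i-1}}(\Ex_{F_{i-\ell-1}}(f))]$, i.e.\ the variance lemma \cref{lem:PVM-varbound} transplanted verbatim to entropy with the same constants. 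The cited result of \cite{CP} (Lemma 3.2) does not deliver this: it \emph{lifts an already-established two-block factorization across a product measure}; it does not convert a contraction hypothesis (your $\PEM$) into a factorization. The contraction-to-factorization conversion for entropy is Lemma 3.5(ii) of \cite{MSW04}, and it has a different, weaker form: $(1-\eps')\Ent \le \Ex[\Ent_{B}] + \Ex[\Ent_{A}]$ with $\eps' = \sqrt{\eps}/p_{\min}$, valid only when $\eps < p_{\min}^2$. Crucially, the order of operations matters: the paper applies this conversion \emph{locally}, on each single subtree $T_v$, where $p_{\min}$ is the minimum single-vertex conditional probability (a constant in $q,\beta,d$), and only afterwards uses the product structure of $\mu_{F_i}^\eta$ over $\{T_v: v\in L_i\}$ together with \cite{CP} Lemma 3.2 to obtain the parallel inequality \cref{lem:EM-entbound}. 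In your pipeline you first pass to the parallel level ($\PEM$) and then try to convert contraction into factorization there; the MSW-style conversion at that level would involve the minimum probability of an entire level configuration on $L_i$, which is exponentially small in $|L_i|$, so the resulting constant would not be independent of $n$. As written, your step~3 is precisely the missing ingredient, and attributing it to \cite{CP} with the variance constants does not close it. (This reordering is exactly what \cref{rmk:proof:cmp} flags: the entropy proof deliberately does not mirror the $\VM\Rightarrow\PVM$ route.)

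A secondary, fixable issue is step~1: ``entropy tensorizes across independent coordinates'' conflates subadditivity of entropy with tensorization of the entropy-contraction (EM) constant. The implication $\EM\Rightarrow\PEM$ with the same $\eps$ is in fact true, but it needs an argument (e.g.\ the entropy chain rule over the subtrees $T_v$, $v\in L_i$, plus convexity of the entropy functional to handle the cross terms, and induction over the components); the spectral-gap-of-product-chains shortcut used for \cref{thm:VM=PVM} has no one-line entropy counterpart. In any case, once you fix step~3 the natural fix is the paper's route, which bypasses $\PEM$ entirely: $\EM \Rightarrow$ local two-block factorization on each $T_v$ (Lemma 3.5(ii) of \cite{MSW04}, using $\eps < p_{\min}^2$, attainable by increasing $\ell$ as you note) $\Rightarrow$ parallel factorization on $F_i$ (\cite{CP} Lemma 3.2) $\Rightarrow$ the telescoping over levels, which then does go through verbatim as in \cref{lem:var-bound}.
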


\begin{lemma}\label{lem:TBF-EOF}
	If for every function $f: \Omega \to \R_{\ge 0}$ we have
	$
	\Ent(f) \le C_{\textsc{tb}} \cdot \sum_{j = 1}^{\ell + 1} \Ex[\Ent_{T_j^\ell}(f)], 
	$
	then there exists  $C_{\textsc{eo}} = C_{\textsc{eo}}(C_{\textsc{tb}},\ell)$ such that for every function $f: \Omega \to \R_{\ge 0}$ we have
	\[
	\Ent(f) \le C_{\textsc{eo}} \left( \Ex[\Ent_E(f)] + \Ex[\Ent_O(f)] \right).
	\]
\end{lemma}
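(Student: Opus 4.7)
My plan is to upgrade the hypothesized tiled block factorization into the desired even-odd factorization in two stages. First, I will establish, for each $j = 1,\dots,\ell+1$, a tile-level bipartite factorization of the form
\begin{equation*}
\Ex[\Ent_{T_j^\ell}(f)] \le D_\ell \big( \Ex[\Ent_{E \cap T_j^\ell}(f)] + \Ex[\Ent_{O \cap T_j^\ell}(f)] \big)
\end{equation*}
with a constant $D_\ell$ independent of $n$. Combining this with the hypothesis and invoking the standard monotonicity $\Ex[\Ent_A(f)] \le \Ex[\Ent_{A'}(f)]$ for $A \subseteq A'$ would yield the conclusion with $C_{\textsc{eo}} = C_{\textsc{tb}}(\ell+1) D_\ell$.

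Two structural facts enable the tile-level bipartite factorization. First, each tile $T_j^\ell$ is a disjoint union of constant-size subtrees $S_i^{(j)}$ of height $\ell-1$, pairwise at graph distance at least $2$; consequently, conditioned on the configuration outside $T_j^\ell$, the Gibbs measure factors as $\mu_{T_j^\ell}^\sigma = \prod_i \mu_{S_i^{(j)}}^\sigma$, and by entropy tensorization for product measures we obtain $\Ent_{T_j^\ell}(f) \le \sum_i \Ex[\Ent_{S_i^{(j)}}(f)]$. Second, each $S_i^{(j)}$ is bipartite and of constant size, so its conditional Gibbs measure has minimum probability bounded below by a positive constant (depending on $\ell,\beta,d,q$), and the bipartite structure makes one color class a product over individual vertices once the other is fixed. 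These ingredients yield a finite-dimensional approximate tensorization of entropy (ATE) at the subtree level:
\begin{equation*}
\Ent_{S_i^{(j)}}(f) \le K_\ell \big( \Ex[\Ent_{E \cap S_i^{(j)}}(f)] + \Ex[\Ent_{O \cap S_i^{(j)}}(f)] \big),
\end{equation*}
for a constant $K_\ell$ depending only on $\ell$ and the Gibbs parameters.

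The main obstacle is passing from the local ATEs on the $\Theta(n/d^\ell)$ subtrees to the tile-level bipartite factorization with a constant $D_\ell$ independent of $n$: a naive chain-rule tensorization of ATE across the subtrees produces a constant proportional to their number, and a crude union bound via $\Ex[\Ent_{E\cap S_i^{(j)}}(f)] \le \Ex[\Ent_E(f)]$ loses a factor of $\Theta(n/d^\ell)$ as well. To avoid this loss, I will exploit a stronger product structure enjoyed by $\mu_{T_j^\ell}$: conditioning on all of $O \cap T_j^\ell$ makes the vertices of $E \cap T_j^\ell$ mutually independent (both within and across subtrees, thanks to the distance-$2$ separation of the subtrees), and symmetrically for the reverse conditioning. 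Combined with the uniform local ATE constants $K_\ell$ on each subtree, this full bipartite product structure should yield the tile-level bipartite factorization via an argument in the spirit of the approximate tensorization techniques of Caputo and Parisi~\cite{CP}, producing a constant $D_\ell$ that depends only on $\ell$ (and implicitly on $\beta,d,q$) and completing the proof.
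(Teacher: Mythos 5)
Your proposal is correct and follows essentially the same route as the paper: a crude, $n$-independent even--odd factorization on each constant-size subtree, lifted to the whole tile $T_j^\ell$ without loss via the product structure of $\mu_{T_j^\ell}^\eta$ and the Caputo--Parisi tensorization (Lemma~3.2 of~\cite{CP}), then monotonicity of averaged entropy and a sum over $j$ against the hypothesized tiled block factorization. The only difference is cosmetic: the paper obtains the local factorization from the $\Omega(1)$ spectral gap of the two-block even--odd dynamics on $B(v,\ell)$ together with Corollary~A.4 of~\cite{DSC}, whereas you invoke a finite-dimensional approximate tensorization based on the bounded minimum probability, which serves the same purpose.
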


\begin{proof}[Proof of \cref{thm:EM-EOF}]
	Follows directly from \cref{lem:EM-TBF,lem:TBF-EOF}.
\end{proof}	
We proved a version of~\cref{lem:EM-TBF} for the variance functional as part of the proof of~\cref{thm:PVM-gap}, and the same argument can then be easily adapted to entropy; its proof is provided in~\cref{subsec:em} . We provide next the proof of \cref{lem:TBF-EOF}, which contains the main novelty in our proof of~\cref{thm:EM-EOF}.

\begin{proof}[Proof of \cref{lem:TBF-EOF}]
	First, we claim that there exists a constant $C' = C'(\ell)$ such that for every function $f: \Omega^\eta_{B(v,\ell)} \to \R_{\ge 0}$ one has
	the following inequality: 
	\begin{equation}\label{eq:eo-crude}
	\Ent^\eta_{B(v,\ell)}(f)
	\le C' \left( \Ex^\eta_{B(v,\ell)}[\Ent_{B(v,\ell) \cap E}(f)] + \Ex^\eta_{B(v,\ell)}[\Ent_{B(v,\ell) \cap O}(f)] \right).
	\end{equation}
	To deduce~\eqref{eq:eo-crude}, consider the even-odd block dynamics $M$ in $B(v,\ell)$ with boundary condition $\eta$ and blocks $\mathcal U = \{E \cap B(v,\ell),O \cap B(v,\ell)\}$.
	A simple coupling argument implies that the spectral gap of $M$ is $\Omega(1)$. 
	Then, Corollary A.4 from \cite{DSC} implies that the log-Sobolev constant $\alpha(M)$ of $M$ is $\Omega(1)$, which establishes \eqref{eq:eo-crude} with constant $C' = O(1/\alpha(M))$. 
	We note that all bounds and comparisons in this argument are fairly crude, and, in fact, the constant $C'$ depends exponentially on $|B(v,\ell)|$, but it is still independent of $n$.

	Next, notice that, for any $\eta \in \Omega$, $\mu_{T_j^\ell}^\eta$ is the product of a collection of distributions on (disjoint) subsets $B(v,\ell)$. 
	Lemma 3.2 from \cite{CP}
	allows us to lift the ``local'' even-odd factorization in each $B(v,\ell)$ from~\eqref{eq:eo-crude} to a ``global'' even-odd factorization in $T_j^\ell$.
	Specifically, for every function $f: \Omega_{T_j^\ell}^\eta \to \R_{\ge 0}$ we obtain
	\[
	\Ent_{T_j^\ell}^\eta(f)
	\le C' \left( \Ex_{T_j^\ell}^\eta[ \Ent_{T_j^\ell \cap E} (f) ] + \Ex_{T_j^\ell}^\eta[ \Ent_{T_j^\ell \cap O} (f) ] \right). 
	\]
	Taking expectation over $\eta$, we get
	\begin{align*}
	\Ex[\Ent_{T_j^\ell}(f)] &\le C' \left( \Ex[\Ent_{T_j^\ell \cap E} (f)] + \Ex[\Ent_{T_j^\ell \cap O} (f)] \right)\le C' \left( \Ex[\Ent_E (f)] + \Ex[\Ent_O (f)] \right);
	\end{align*}
	the last inequality follows from the fact
	that $\Ent_E^\eta(f) = \Ex_E^\eta[\Ent_{T_j^\ell \cap E}(f)] + \Ent_E^\eta[\Ex_{T_j^\ell \cap E}(f)].$
	Summing up over $j$, we obtain 
	\begin{align*}
	\sum _{j=1}^{\ell+1}\Ex[\Ent_{T_j^\ell}(f)] &\le C'(\ell+1) \left( \Ex[\Ent_E (f)] + \Ex[\Ent_O (f)] \right),
	\end{align*}
	and the result follows by taking $C_{\textsc{eo}} = C' (\ell+1)$.
\end{proof}

\subsection{Proof of ~\cref{lem:EM-TBF}}
\label{subsec:em}

We provide next the proof of~\cref{lem:EM-TBF}, which follows from the next two lemmas.

\begin{lemma}\label{lem:ent1-bound}
	For all $f: \Omega \to \R_{\ge 0}$,
	$
	\sum_{j=1}^{\ell+1} \Ex[\Ent_{T_j^\ell}(f)] \ge \sum_{i=1}^{h+1} \Ex[\Ent_{B_i^\ell}(\Ex_{F_{i-\ell-1}}(f))].
	$
\end{lemma}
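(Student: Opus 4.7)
The plan is to mirror the proof of \cref{lem:dirich-bound} almost verbatim, replacing $\Var$ by $\Ent$ throughout. The variance argument relies on only three abstract properties of the functional: the chain rule (law of total variance), the law of total expectation, and convexity under conditional expectation across non-adjacent sets. All three have direct entropy analogs: the chain rule $\Ent_F^\eta(f) = \Ex_F^\eta[\Ent_A(f)] + \Ent_F^\eta[\Ex_A(f)]$ for $A \subseteq F$, the tower property (identical for expectation), and the entropy convexity inequality $\Ent_A^\eta[\Ex_B(f)] \le \Ex_B^\eta[\Ent_A(f)]$ whenever $\partial A \cap B = \emptyset = A \cap \partial B$. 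The last of these follows from Jensen's inequality applied to the convex functional $\Ent$, using that for such $A,B$ the conditional measure $\mu_{A\cup B}^\eta$ factorizes as $\mu_A^\eta \otimes \mu_B^\eta$.

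With these tools in hand, the combinatorial structure of the argument carries over unchanged. Fix $j \in \{1, \dots, \ell+1\}$, write $T_j^\ell = \bigcup_{k=0}^m B_{i(k)}^\ell$ with $i(k) = k(\ell+1) + j$, and set $S_k = \bigcup_{r=0}^k B_{i(r)}^\ell$, with the convention $S_{-1} = \emptyset$. Two applications of the chain rule of entropy, followed by discarding a nonnegative remainder (exactly as in the derivation of~\eqref{eq:var-bound}), yield
\begin{equation*}
\Ex[\Ent_{S_k}(f)] \ge \Ex[\Ent_{S_{k-1}}(f)] + \Ex[\Ent_{B_{i(k)}^\ell}(\Ex_{S_{k-1}}(f))].
\end{equation*}
Since $B_{i(k)}^\ell$ lies at graph distance at least $2$ from $F_{i(k)-\ell-1}$, entropy convexity combined with the tower property $\Ex_{S_{k-1}}(f) = \Ex_{S_{k-1}}(\Ex_{F_{i(k)-\ell-1}}(f))$ allows us to replace $\Ex_{S_{k-1}}(f)$ inside the entropy by $\Ex_{F_{i(k)-\ell-1}}(f)$, giving
\begin{equation*}
\Ex[\Ent_{B_{i(k)}^\ell}(\Ex_{S_{k-1}}(f))] \ge \Ex[\Ent_{B_{i(k)}^\ell}(\Ex_{F_{i(k)-\ell-1}}(f))].
\end{equation*}
Telescoping over $k = 0, \dots, m$ (using $S_{-1} = \emptyset$ and $S_m = T_j^\ell$) yields $\Ex[\Ent_{T_j^\ell}(f)] \ge \sum_{k=0}^m \Ex[\Ent_{B_{i(k)}^\ell}(\Ex_{F_{i(k)-\ell-1}}(f))]$, and summing over $j = 1, \dots, \ell+1$ then produces the claimed inequality.

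We do not anticipate a genuine obstacle, since the combinatorics is identical to that of \cref{lem:dirich-bound}. The only step requiring explicit justification is the entropy convexity inequality, which is the standard entropy analog of part 3 of \cref{lemma:e-var} and holds via Jensen together with the product structure of $\mu_{A\cup B}^\eta$. Once this entropy toolbox is set up, the proof is essentially a bookkeeping exercise tracking the indices $S_k$, $B_{i(k)}^\ell$, and $F_{i(k)-\ell-1}$ through the same telescoping argument.
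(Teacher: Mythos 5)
Your proposal is correct and takes essentially the same route as the paper, which proves \cref{lem:ent1-bound} precisely by repeating the argument of \cref{lem:dirich-bound} with variance replaced by entropy everywhere, relying on the fact that the three properties in \cref{lemma:e-var} (total expectation, the entropy chain rule, and convexity across non-adjacent sets) hold for entropy as well, as in \cite{MSW04}. One small slip: the identity actually needed in the replacement step is $\Ex_{F_{i(k)-\ell-1}}(\Ex_{S_{k-1}}(f)) = \Ex_{F_{i(k)-\ell-1}}(f)$ (law of total expectation, since $S_{k-1} \subseteq F_{i(k)-\ell-1}$), rather than $\Ex_{S_{k-1}}(f) = \Ex_{S_{k-1}}(\Ex_{F_{i(k)-\ell-1}}(f))$ as written, but this is immediate from your toolbox and does not affect the argument.
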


\begin{lemma}\label{lem:ent2-bound}
	If $\mu = \mu_T^\tau$ satisfies $\EM(\ell,\eps)$, then 
	there exists a constant $C = C(\ell ,\eps)$ such that
	for every function $f: \Omega \to \R_{\ge 0}$ we have
	$
	\Ent(f) \le C \cdot \sum_{i=1}^{h+1} \Ex[\Ent_{B_i^\ell}(\Ex_{F_{i-\ell-1}}(f))].
	$
	
\end{lemma}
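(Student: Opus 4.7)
The plan is to closely mirror the proof of Lemma \ref{lem:var-bound}, replacing variance with entropy throughout. I would proceed in three stages.

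First, I would establish a ``Parallel Entropy Mixing'' (PEM) statement: under $\EM(\ell,\eps)$, for every $1 \le i \le h+1$, every boundary $\eta \in \Omega$, and every $g \colon \Omega_{F_i}^\eta \to \R_{\ge 0}$ that is independent of the configuration on $B_i^\ell$, one has $\Ent_{F_i}^\eta(g_{L_i}) \le \eps \cdot \Ent_{F_i}^\eta(g)$. The derivation uses standard tensorization of entropy for product measures together with the observation that conditional on $\sigma_{L_i}$ the measure $\mu_{F_i}^\eta$ factorizes as a product over the disjoint (and pairwise non-adjacent) subtrees $\{T_v\}_{v \in L_i}$, so that EM applied to each $T_v$ can be combined additively.

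Second, I would establish the entropy analog of Lemma \ref{lem:PVM-varbound}: if PEM$(\ell,\eps)$ holds with $\eps$ sufficiently small, then for every $F = A \cup B$, every $A' \subseteq A$, every $\eta$, and every $f \colon \Omega_F^\eta \to \R_{\ge 0}$,
$$
\Ent_F^\eta(\Ex_A(f)) \le C_1 \cdot \Ex_F^\eta[\Ent_B(\Ex_{A'}(f))] + C_2 \cdot \Ex_F^\eta[\Ent_A(\Ex_{A'}(f))],
$$
with $C_2 = O(\eps)$. The idea is to combine the chain rule $\Ent_F(f) = \Ex_F[\Ent_A(f)] + \Ent_F(\Ex_A(f))$ with PEM applied to a decomposition of $f$ into a piece independent of $B$ (on which PEM bites directly) and a correction term bounded by $\Ent_B$. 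Because entropy does not admit the clean additive decomposition available for variance, this step is delicate; one may either proceed through the Donsker--Varadhan variational formula or appeal to the recursive block factorization identities of Caputo and Parisi~\cite{CP}.

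Third, I would replicate the telescoping argument from the variance proof. The iterated chain rule gives $\Ent(f) = \sum_{i=1}^{h+1} \Ex[\Ent_{F_i}(\Ex_{F_{i-1}}(f))]$; applying the second-stage bound with $F = F_i$, $A = F_{i-1}$, $B = B_i^\ell$, and $A' = F_{i-\ell-1}$ produces one ``good'' term $\Ex[\Ent_{B_i^\ell}(\Ex_{F_{i-\ell-1}}(f))]$ and one ``residual'' term $\Ex[\Ent_{F_{i-1}}(\Ex_{F_{i-\ell-1}}(f))]$ per level. The residual at level $i$ telescopes into $\sum_{j=i-\ell}^{i-1} \Ex[\Ent_{F_j}(\Ex_{F_{j-1}}(f))]$ via the chain rule, so summing over $i$ each level appears at most $\ell$ times and the total residual is bounded by $\ell \cdot \Ent(f)$, exactly as in \eqref{eq:msw01}--\eqref{eq:mu-var-mu}. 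Choosing $\eps$ small enough (by iterating EM on a larger block if necessary) so that $C_2 \cdot \ell < 1$ and rearranging yields the stated inequality with $C = C(\ell,\eps)$.

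The main obstacle will be the second stage. Variance is bilinear and admits a Pythagorean-type splitting along $A$ and $B$, which makes Lemma \ref{lem:PVM-varbound} essentially an algebraic identity followed by Cauchy--Schwarz; entropy is strictly convex and nonlinear, so the natural cross terms do not vanish. Obtaining the entropy analog with the correct quantitative dependence of $C_2$ on $\eps$---good enough for the telescoping absorption in the third stage to succeed---is the quantitatively delicate part of the argument.
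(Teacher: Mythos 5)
Your overall skeleton is right at the top and bottom: the paper's proof of this lemma is indeed "the variance proof with entropy substituted," and your stage 3 (the iterated chain rule, the per-level application of a two-block factorization, and the telescoping absorption of the residual terms, mirroring \eqref{eq:msw01}--\eqref{eq:mu-var-mu}) is exactly what the paper does. The problem is that the per-level inequality you need -- the entropy analogue of \cref{lem:PVM-varbound}, which in the paper is \cref{lem:EM-entbound} -- is precisely where all the work lies, and your stages 1--2 do not deliver it. Stage 1 is not "standard tensorization of entropy": subadditivity bounds $\Ent(g)$ from above by a sum of conditional entropies, which is the wrong direction for combining the per-subtree contractions "additively," and the statement you actually need (that the fixed-reference entropy contraction coefficient of a product of the subtree projection operators equals the worst single-subtree coefficient) is a genuinely nontrivial theorem that you neither prove nor cite; note that for variance the paper proves the analogous VM$\,\Rightarrow\,$PVM step via a spectral product-chain argument (\cref{thm:VM=PVM}), and deliberately does \emph{not} attempt an entropy analogue (see Remark~\ref{rmk:proof:cmp}).

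Stage 2 is the real gap. You want PEM to imply $\Ent_F^\eta(\Ex_A(f)) \le C_1\,\Ex_F^\eta[\Ent_B(\Ex_{A'}(f))] + C_2\,\Ex_F^\eta[\Ent_A(\Ex_{A'}(f))]$ with $C_2 = O(\eps)$, but you give no argument, and the obvious one fails quantitatively: the known conversion from an entropy-mixing (contraction) statement to an entropy \emph{factorization} statement, Lemma 3.5(ii) of \cite{MSW04}, loses a factor $1/p_{\min}$ where $p_{\min}$ is the minimum marginal probability of the conditioned configuration. Applied at a single vertex this is a constant ($p_{\min} \ge q^{-1}e^{-\beta(d+1)}$), but applied at the whole layer $L_i$, as your PEM-based plan requires, the corresponding minimum probability is exponentially small in $|L_i|$ and the constant is useless. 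This is exactly why the paper takes the other order: it first converts $\EM(\ell,\eps)$ into a \emph{local} two-block factorization on each subtree $T_v$ (Lemma 3.5(ii) of \cite{MSW04}, cost $\eps' = \sqrt{\eps}/p_{\min}$ with $p_{\min}$ a constant), and only then parallelizes the \emph{factorization} across the product structure at level $L_i$ via Lemma 3.2 of \cite{CP}, which is lossless for product measures; the output is \cref{lem:EM-entbound}, after which your stage 3 runs verbatim as in \cref{lem:var-bound}. Your fallback remark that one could "appeal to the factorization identities of Caputo--Parisi" is the right instinct, but once you use it the PEM detour is unnecessary -- and as written, the central inequality of your proof remains unproved.
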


\begin{proof}[Proof of \cref{lem:EM-TBF}]
	Follows from \cref{lem:ent1-bound,lem:ent2-bound}.
\end{proof}

\cref{lem:ent1-bound,lem:ent2-bound} are counterparts of \cref{lem:dirich-bound,lem:var-bound}, respectively, for entropy. 
In particular, the proof of \cref{lem:ent1-bound} is identical to that of \cref{lem:dirich-bound}, replacing variance by entropy everywhere and is thus omitted.
Note that
the properties in \cref{lemma:e-var} hold for entropy as well (see \cite{MSW04}). 

It remains to prove \cref{lem:ent2-bound}, but again its proof is almost the same as \cref{lem:var-bound} (replacing variance with entropy). 
We only require the following lemma to play the role of \cref{lem:PVM-varbound} in
the proof of \cref{lem:var-bound}.
Let $p_{\min}$ denote the minimum probability of any vertex receiving any spin value under any neighborhood configuration; then, $p_{\min} \ge \frac{1}{q} e^{-\beta (d+1)}$. 

\begin{lemma}\label{lem:EM-entbound}
	For any $\eps < p_{\min}^2$, if the Gibbs distribution $\mu = \mu_T^\tau$ satisfies $\EM(\ell,\eps)$, 
	then for $\eps' = \frac{\sqrt{\eps}}{p_{\min}}$, every $1 \le i \le h+1$, every $\eta \in \Omega$, and every function $f: \Omega_{F_i}^\eta \to \R_{\ge 0}$, we have
	\[
	(1-\eps') \Ent_{F_i}^\eta(f) \le \Ex_{F_i}^\eta [ \Ent_{B_i^\ell}(f) ] + \Ex_{F_i}^\eta [ \Ent_{F_{i-1}}(f) ].
	\]
\end{lemma}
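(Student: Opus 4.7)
The plan is to mirror the structure of \cref{lem:PVM-varbound}, with entropy playing the role of variance. First, I apply the law of total entropy (the entropy analog of \cref{lemma:e-var}(2), which holds by standard properties of conditional entropy; cf.\ \cite{MSW04}) with $A = F_{i-1}$:
\[
\Ent_{F_i}^\eta(f) = \Ex_{F_i}^\eta[\Ent_{F_{i-1}}(f)] + \Ent_{F_i}^\eta[\Ex_{F_{i-1}}(f)].
\]
Since $\Ex_{F_i}^\eta[\Ent_{F_{i-1}}(f)]$ already appears on the right-hand side of the desired inequality, the task reduces to bounding $\Ent_{F_i}^\eta(h) \le \Ex_{F_i}^\eta[\Ent_{B_i^\ell}(f)] + \eps'\Ent_{F_i}^\eta(f)$, where $h := \Ex_{F_{i-1}}(f)$ depends only on $\sigma_{L_i}$.

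Next, I exploit the product structure of the Gibbs measure on $F_i$: the subtrees $\{T_v\}_{v\in L_i}$ partition $F_i$ and interact only through $\eta$ above $L_i$, so $\mu_{F_i}^\eta$ factorizes as $\prod_{v\in L_i}\mu_{T_v}^\eta$. In particular, its marginal on $L_i$ is a product of single-site distributions $\pi_v$, each satisfying $\pi_v(a) \ge p_{\min}$ for all $a \in [q]$. Applying the tensorization of entropy along this product structure yields
\[
\Ent_{F_i}^\eta(h) \le \sum_{v \in L_i} \Ex_{F_i}^\eta[\Ent_{T_v}(h)],
\]
and a short calculation using the product factorization identifies $h$ (viewed as a function of $\sigma_v$ with $\sigma_{L_i\setminus v}$ held fixed) with $(F_v)_v$, where $F_v := \Ex_{F_{i-1}\setminus T_v}(f)$ is an auxiliary function on $\Omega_{T_v}^\eta$ parametrized by $\sigma_{L_i\setminus v}$.

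The third step invokes EM to get a per-vertex contraction. Because EM requires its input to be independent of $B(v,\ell)$ while $F_v$ is not, I pre-average to $\widetilde F_v := \Ex_{B(v,\ell)}(F_v)$, which is independent of $B(v,\ell)$, and apply $\EM(\ell,\eps)$: $\Ent_{T_v}^\eta((\widetilde F_v)_v) \le \eps\,\Ent_{T_v}^\eta(\widetilde F_v) \le \eps\,\Ent_{T_v}^\eta(F_v)$. The remaining task is to control the discrepancy between $(\widetilde F_v)_v$ and $(F_v)_v$, which is nonzero because averaging over $B(v,\ell)$ does not commute with conditioning on $\sigma_v$ under $\mu_{T_v}^\eta$. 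The discrepancy can be written as an inner product of $F_v$ against a centered Radon--Nikodym-type quantity whose magnitude is controlled by $1/p_{\min}$ via the lower bound on $\pi_v$. A Cauchy--Schwarz argument then converts the $\eps$-contraction of entropy from EM into a $\sqrt{\eps}/p_{\min}$-contraction, yielding the per-vertex bound $\Ent_{\pi_v}((F_v)_v) \le (\sqrt{\eps}/p_{\min})\,\Ent_{T_v}^\eta(F_v)$. This identifies $\eps' = \sqrt{\eps}/p_{\min}$ and makes the hypothesis $\eps < p_{\min}^2$ the natural condition for $\eps' < 1$.

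Finally, I will sum this per-vertex bound over $v \in L_i$ and use a telescoping chain-rule argument, in the spirit of the variance computation in \cref{lem:var-bound}, to identify $\sum_v \Ex_{F_i}^\eta[\Ent_{T_v}(F_v)]$ with (a constant multiple of) $\Ent_{F_i}^\eta(f) - \Ex_{F_i}^\eta[\Ent_{B_i^\ell}(f)]$, delivering the claimed inequality. I expect the main obstacle to be the bridging step in the third paragraph: unlike variance, entropy lacks the bilinear structure that drives the clean factorization of \cref{lem:PVM-varbound}, so the Cauchy--Schwarz detour and the accompanying $1/p_{\min}$ loss appear genuinely necessary to transfer EM's single-vertex contraction to a bound expressed in terms of $\Ent_{T_v}^\eta(F_v)$.
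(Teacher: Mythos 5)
There is a genuine gap, and it sits exactly where you predicted the main obstacle would be. The per-vertex bound you end your third step with, $\Ent_{\pi_v}\bigl((F_v)_v\bigr) \le \frac{\sqrt{\eps}}{p_{\min}}\,\Ent_{T_v}^\eta(F_v)$, is false for general nonnegative $F_v$ on $\Omega_{T_v}^\eta$: take $f$ (hence $F_v$) depending only on $\sigma_v$. Then $(F_v)_v = F_v$, so the left-hand side equals $\Ent_{T_v}^\eta(F_v)$, which cannot be bounded by $\eps'\Ent_{T_v}^\eta(F_v)$ with $\eps'<1$. In this example your pre-averaged function $\widetilde F_v = \Ex_{B(v,\ell)}(F_v)$ is constant (since $v \in B(v,\ell)$), so the ``discrepancy'' between $(\widetilde F_v)_v$ and $(F_v)_v$ is the entire entropy of $F_v$; no Cauchy--Schwarz argument with a $1/p_{\min}$ loss can absorb it into a multiple $\eps'<1$ of $\Ent_{T_v}^\eta(F_v)$. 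The correct single-subtree statement necessarily carries an additive term: $\Ent_{T_v}^\eta\bigl((F_v)_v\bigr) \le \eps'\,\Ent_{T_v}^\eta(F_v) + \Ex_{T_v}^\eta[\Ent_{B(v,\ell)}(F_v)]$, which (via $\Ent_{T_v}^\eta(F_v) = \Ex_{T_v}^\eta[\Ent_{T_v\setminus v}(F_v)] + \Ent_{T_v}^\eta((F_v)_v)$) is exactly the two-block factorization $(1-\eps')\Ent_{T_v}^\eta(F_v) \le \Ex_{T_v}^\eta[\Ent_{B(v,\ell)}(F_v)] + \Ex_{T_v}^\eta[\Ent_{T_v\setminus v}(F_v)]$. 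This is Lemma 3.5(ii) of \cite{MSW04}, which the paper simply cites; re-deriving it is the real technical content of the step you sketch, and your outline gestures at it without supplying the argument.

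Because you dropped that additive term, your fourth step is then forced to produce $\Ex_{F_i}^\eta[\Ent_{B_i^\ell}(f)]$ out of a telescoping identity, and the identification you propose --- that $\sum_{v\in L_i}\Ex_{F_i}^\eta[\Ent_{T_v}(F_v)]$ equals a constant multiple of $\Ent_{F_i}^\eta(f) - \Ex_{F_i}^\eta[\Ent_{B_i^\ell}(f)]$ --- is unsubstantiated and, even granted, would insert the $B_i^\ell$ term with the wrong sign (your chain would yield an upper bound with $-\,\eps'\Ex[\Ent_{B_i^\ell}(f)]$ rather than the required $+\,\Ex[\Ent_{B_i^\ell}(f)]$). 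In the correct argument the $B(v,\ell)$ entropies come out of the per-vertex step itself and, summed over $v\in L_i$, give precisely $\Ex_{F_i}^\eta[\Ent_{B_i^\ell}(f)]$ since $B_i^\ell = \bigcup_{v\in L_i}B(v,\ell)$; the lifting of the single-subtree factorization across the product $\mu_{F_i}^\eta = \prod_{v\in L_i}\mu_{T_v}^\eta$ (with $F_{i-1} = \bigcup_{v\in L_i}(T_v\setminus v)$) is exactly what the paper imports from Lemma 3.2 of \cite{CP}. So the proof of \cref{lem:EM-entbound} in the paper is a two-citation argument --- \cite{MSW04} for the per-subtree factorization with $\eps' = \sqrt{\eps}/p_{\min}$, \cite{CP} for the tensorization --- whereas your plan attempts to reprove both; the first is mis-stated (and false as stated), and the bookkeeping that should recover the $B_i^\ell$ term is misplaced. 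Your first two steps (law of total entropy and tensorization over the product structure) are fine, but they are not where the difficulty lies.
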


\begin{proof}
	As shown by Lemma 3.5(ii) of \cite{MSW04}, $\EM(\ell,\eps)$ implies that for every $v\in T$, every $\eta \in \Omega$, and every function $f: \Omega_{T_v}^\eta \to \R_{\ge 0}$, we have for $\eps' = \frac{\sqrt{\eps}}{p_{\min}}$
	\[
	(1-\eps') \Ent_{T_v}^\eta(f) \le \Ex_{T_v}^\eta [ \Ent_{B(v,\ell)}(f) ] + \Ex_{T_v}^\eta [ \Ent_{T_v \setminus v}(f) ]
	\]
	This entropy factorization holds for every subtree, and in particular for 
	all the subtrees rooted at the same level (e.g., for all $v \in L_i$).
	Then, we can apply Lemma 3.2 from \cite{CP} to obtain such a factorization for $F_i$. 
	Specifically, for every $1 \le i \le h+1$, every $\eta \in \Omega$, and every function $f: \Omega_{F_i}^\eta \to \R_{\ge 0}$, we have from Lemma 3.2 in \cite{CP} that
	\[
	(1-\eps') \Ent_{F_i}^\eta(f) \le \Ex_{F_i}^\eta [ \Ent_{B_i^\ell}(f) ] + \Ex_{F_i}^\eta [ \Ent_{F_{i-1}}(f) ], 
	\]
	since
	$F_i = \bigcup_{v\in L_i} T_v$,
	$F_{i-1} = \bigcup_{v\in L_i} T_v \setminus v$, and
	$B_i^\ell = \bigcup_{v\in L_i} B(v,\ell)$. 
\end{proof}

\cref{lem:ent2-bound} then can be proved in the same way as \cref{lem:var-bound}, simply using \cref{lem:EM-entbound} instead of \cref{lem:PVM-varbound}.

\begin{remark}
	\label{rmk:proof:cmp}
	In \cref{sec:variance}, we establish bounds on the spectral gap of the tiled block dynamics under the VM condition. This is equivalent to the tiled block factorization of variance. The schematic of our proof is:
	\[
	\parbox{40pt}{\centering VM} 
	\overset{\text{\cref{thm:VM=PVM}}}{\Longrightarrow} 
	\parbox{40pt}{\centering PVM} 
	\overset{\substack{\text{Lemma 3.5(i)} \\ \text{\cite{MSW04}}}}{\Longrightarrow} 
	\parbox{80pt}{\centering Parallel\\Variance\\Factorization} 
	\overset{\text{\cref{lem:dirich-bound,lem:var-bound}}}{\Longrightarrow} 
	\parbox{80pt}{\centering Tiled Block\\Variance\\Factorization} 
	\]
	Our proof in this section for the tiled block factorization of entropy, while similar, follows a slightly different route:
	\[
	\parbox{40pt}{\centering EM} 
	\overset{\substack{\text{Lemma 3.5(ii)} \\ \text{\cite{MSW04}}}}{\Longrightarrow} 
	\parbox{60pt}{\centering Entropy\\Factorization} 
	\overset{\substack{\text{Lemma 3.2} \\ \text{\cite{CP}}}}{\Longrightarrow} 
	\parbox{80pt}{\centering Parallel\\Entropy\\Factorization} 
	\overset{\text{\cref{lem:ent1-bound,lem:ent2-bound}}}{\Longrightarrow} 
	\parbox{80pt}{\centering Tiled Block\\Entropy\\Factorization} 
	\]
\end{remark}

\section{Comparison between block dynamics and the SW dynamics}
\label{sec:sw}

In this section we bound the spectral gap of the SW dynamics in terms of the gap of the tiled-block dynamics.
We do so in a general setting, i.e., for arbitrary graphs, block dynamics, and boundary conditions; in particular, we prove~\Cref{thm:sw-block}.
The proofs in this section extend ideas from~\cite{BCSV,Ullrich}.
We believe our generalization could find useful applications in the future.

Let $G = (V\cup \partial V,E)$ be a graph. 
We assume $V\cap \partial V = \emptyset$ 
and interpret $\partial V$ as the boundary of~$V$.
Define $\tau$ to be a fixed spin configuration on $\partial V$ viewed as a boundary condition.
Let $\mu^\tau_G$ be the Potts distribution on $G$ with boundary condition $\tau$ and let $\Omega_G^\tau$ be the set of Potts configurations of $G$ consistent with $\tau$.

Given a Potts configuration $\sigma_t \in \Omega_G^\tau$ at time $t$, the SW dynamics generates the next configuration $\sigma_{t+1}$ as follows:	
\begin{enumerate}
	\item Obtain $A_t \subseteq E$ by including each monochromatic edge of $E$ in $\sigma_t$ independently with probability $p$; 
	\item For each connected component $C$ of the graph $(V\cup \partial V, A_t)$ such that $C \subseteq V$ (i.e., those containing \emph{no} vertices from the boundary $\partial V$), we pick a new spin from $\{1,\dots,q\}$ u.a.r.\ and assign it to every vertex of $C$; 
	Vertices from other components keep their spin in $\sigma_t$. 
\end{enumerate}
Observe that the boundary condition $\tau$ determines the spin of all the vertices connected to $\partial V$ in $A_t$. The SW dynamics is reversible with respect to $\mu_G^\tau$; see, e.g.,~\cite{ES}.

We introduce next a block variant of the SW dynamics.
Let $\mathcal D = \{D_1,\dots,D_m\}$ be such that $D_i \subseteq V$ and $\cup_{i=1}^m D_i = V$.
Given a configuration $\sigma_t$:
\begin{enumerate}
	\item Obtain $A_t \subseteq E$ by including each monochromatic edge of $E$ in $\sigma_t$ independently with probability $p$; 
	\item Pick a random block $D_i$ from $\mathcal D$;
	\item For each connected component $C$ of the graph $(V\cup \partial V, A_t)$ such that $C \subseteq D_i$ (i.e., those containing \emph{no} vertices from $V\setminus D_i$ or $\partial V$), we pick a new spin from $\{1,\dots,q\}$ u.a.r.\ and assign it to every vertex of $C$; 
	Vertices from other components keep their spin in $\sigma_t$. 
\end{enumerate}
Let $\SWT$ denote the transition matrix of this chain; we shall see that
$\SWT$ is also reversible w.r.t.\ $\mu_G^\tau$.
Recall that $\PSW$ denotes the transition matrix for the SW dynamics.
We prove the following.
\begin{lemma} 
	\label{lem:SW-comp1}	
	For every function $f:\Omega_G^\tau \to \R$, we have
	$
	\mathcal{E}_{\PSW}(f,f) \ge \mathcal{E}_{\SWT}(f,f).
	$
\end{lemma}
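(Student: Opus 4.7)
My plan is to lift both $\PSW$ and $\SWT$ to the joint Edwards--Sokal spin-edge space and compare their Dirichlet forms there. Let $\rho(\sigma, A) \propto \mu_G^\tau(\sigma)\, p^{|A|}(1-p)^{|M(\sigma)| - |A|}\, \1[A \subseteq M(\sigma)]$ denote the Edwards--Sokal joint distribution, whose spin marginal is $\mu_G^\tau$ and whose conditional $\rho(\cdot \mid A)$ assigns to each connected component $C$ of $(V \cup \partial V, A)$ a uniform spin from $[q]$, with components intersecting $\partial V$ forced to agree with $\tau$; let $\pi_A$ denote the $A$-marginal of $\rho$. I will introduce two $\rho$-reversible chains on the joint space: let $Q$ resample $\sigma$ from $\rho(\cdot \mid A)$ while keeping $A$ fixed, and for each $D_i \in \mathcal{D}$ let $Q_i$ resample spins only on those components $C \subseteq D_i$ (freezing all other spins); set $Q_\mathcal{D} = \frac{1}{m}\sum_{i=1}^m Q_i$. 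For $f: \Omega_G^\tau \to \R$, write $\tilde f(\sigma, A) := f(\sigma)$ for its lift.

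The crux of the proof is the pair of identities $\mathcal{E}_{\PSW}(f,f) = \mathcal{E}_Q(\tilde f, \tilde f)$ and $\mathcal{E}_{\SWT}(f,f) = \mathcal{E}_{Q_\mathcal{D}}(\tilde f, \tilde f)$. For the first, by construction $\PSW(\sigma, \sigma') = \sum_A \rho(A \mid \sigma)\,\rho(\sigma' \mid A)$, since step 1 of SW samples $A$ from $\rho(\cdot \mid \sigma)$ and steps 2--3 produce $\sigma'$ from $\rho(\cdot \mid A)$. Unwinding:
\[
\mathcal{E}_{\PSW}(f,f) = \tfrac{1}{2}\!\sum_{\sigma, A, \sigma'}\! \rho(\sigma, A)\,\rho(\sigma' \mid A)\,(f(\sigma) - f(\sigma'))^2 = \sum_A \pi_A(A)\,\Var_{\rho(\cdot \mid A)}(\tilde f(\cdot, A)) = \mathcal{E}_Q(\tilde f, \tilde f).
\]
The analogous identity for $\SWT$ holds because the block-SW reassignment on components $C \subseteq D_i$ is precisely the conditional distribution $\rho(\cdot \mid A, \sigma|_{V \setminus (D_i\text{-components})})$, which is the heat-bath update defining $Q_i$.

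It then suffices to prove $\mathcal{E}_Q(g,g) \ge \mathcal{E}_{Q_i}(g,g)$ for every $g$ on the joint space; combined with the identities above and averaging over $i$, this yields $\mathcal{E}_{\PSW}(f,f) \ge \mathcal{E}_{\SWT}(f,f)$. The inequality is an immediate consequence of the law of total variance applied for each fixed $A$:
\[
\Var_{\rho(\cdot \mid A)}(g(\cdot, A)) \;\ge\; \E_{\rho(\cdot \mid A)}\!\left[\Var_{\rho(\cdot \mid A,\, \sigma|_{V \setminus (D_i\text{-comp.})})}(g(\cdot, A))\right],
\]
followed by summation against $\pi_A(A)$. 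The main technical point is thus the Edwards--Sokal bookkeeping that realizes $\PSW$ and $\SWT$ as spin-marginals of the two joint heat-bath chains; once that identification is done, the required domination reduces to convexity of variance under further conditioning.
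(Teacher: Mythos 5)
Your proof is correct, and it reaches the inequality by the same underlying mechanism as the paper---lifting the comparison to the Edwards--Sokal joint space---but with different bookkeeping. The paper works with Ullrich's operator factorization $\PSW = TRT^*$ and $\SWT = \frac{1}{m}\sum_{k} TQ_kT^*$, where $T^*$ lifts a spin function to the joint space, $R$ resamples the spins of all components contained in $V$, and $Q_k$ resamples only those contained in $D_k$; the inequality then follows from $R = Q_k R Q_k$, $Q_k^2 = Q_k = Q_k^*$ and Cauchy--Schwarz, giving $\inner{f}{\PSW f}{\mu_G^\tau} = \inner{Q_kT^*f}{RQ_kT^*f}{\nu_G^\tau} \le \inner{Q_kT^*f}{Q_kT^*f}{\nu_G^\tau} = \inner{f}{TQ_kT^*f}{\mu_G^\tau}$. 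Your identities $\mathcal{E}_{\PSW}(f,f)=\mathcal{E}_{Q}(\tilde f,\tilde f)$ and $\mathcal{E}_{\SWT}(f,f)=\mathcal{E}_{Q_{\mathcal D}}(\tilde f,\tilde f)$ are exactly these factorizations read off at the level of Dirichlet forms (heat-bath Dirichlet form $=$ expected conditional variance), and your law-of-total-variance step is the probabilistic form of the paper's Cauchy--Schwarz step with the projections $R$ and $Q_k$; both are valid, and yours is arguably more elementary since it avoids adjoints and operator algebra altogether. Two things the paper's operator formulation makes explicit that you leave implicit (though both follow immediately from your joint-space representation, so there is no gap): the reversibility of $\SWT$ with respect to $\mu_G^\tau$, which is what justifies the symmetric double-sum form of $\mathcal{E}_{\SWT}$ you use, and the positive semidefiniteness of $\PSW$, which the paper reuses later in the proof of Theorem~\ref{thm:sw-block}.
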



\begin{proof}
	This proof uses a decomposition of the transition matrices
	$\PSW$ and $\SWT$ as products of simpler matrices introduced by Ullrich~\cite{Ullrich}.
	Let $\JC \subseteq 2^E \times \Omega_G^\tau$ denote the joint space, 
	where each configuration is a pair $(A,\sigma)$ such that $A \subseteq E$ and $\sigma \in \Omega_G^\tau$. 		
	The joint Edwards-Sokal measure~\cite{ES} on $\JC$ is given by
	\begin{equation}\label{eq:joint}
	\nu_G^\tau(A,\sigma) = \frac{1}{Z_\textsc{J}} p^{|A|}(1-p)^{|E \setminus A|} \1 (A \subseteq M(\sigma)),
	\end{equation}
	where
	$p=1-{e}^{-\beta}$,
	$A \subseteq E$,
	$\sigma \in \Omega_G$,
	$M(\sigma)$ denotes the set of monochromatic edges of $E$ in $\sigma$,
	and $Z_\textsc{J}$ is the corresponding partition function. 	
	
	Let $A,B \subseteq E$ and $\sigma,\eta \in \Omega_G^\tau$.
	We define the matrices  $T$, $T^*$, $R$ and $Q_k$ with entries given by
	\begin{align*}
	T(\sigma,(A,\eta)) &= \1(\sigma=\eta)\1(A \subseteq M(\sigma)) \cdot p^{|A|} (1-p)^{|M(\sigma)\setminus A|}\\
	T^*((A,\eta),\sigma) &= \1(\eta = \sigma)\\
	R((A,\sigma),(B,\eta)) &= \1(A = B)\1(A \subseteq M(\sigma) \cap  M(\eta)) \cdot q^{-c(A)}\\
	Q_{k}((A,\sigma),(B,\eta)) &= \1(A = B)\1(A \subseteq M(\sigma) \cap  M(\eta))  \1(\sigma(V \setminus D_k)=\eta(V\setminus D_k)) \cdot q^{-c_k(A)},\notag
	\end{align*}
	where $c(A)$ is the number of connected components of $(V\cup \partial V,A)$ that are fully contained in $V$,
	and $c_k(A)$ is the number of those fully contained in $D_k$. 
	Notice that in the definition of $Q_k$, the condition $\sigma(V \setminus D_k)=\eta(V\setminus D_k)$ implies that every component containing a vertex from $V \setminus D_k$ has the same spin in $\sigma$ and $\eta$. 
	Then, we have the decomposition $\PSW = TRT^*$ and $\SWT = \frac{1}{m}\sum_{i=1}^m TQ_kT^*$ following from the definition.
	Note that $T$ is a $|\Omega_G^\eta| \times |\JC|$ matrix,
	$T^*$ is a $|\JC| \times |\Omega_G^\eta|$ matrix, while $R$ and $Q_k$ have dimensions $|\JC| \times |\JC|$.
	
	The matrix $T$ defines an operator from $L_2(\nu_G^\tau)$ to $L_2(\mu_G^\tau)$.
	It is straightforward to check that  $T^*:L_2(\mu_G^\tau) \rightarrow L_2(\nu_G^\tau)$ is the adjoint of $T$.
	The matrices $R$ and $Q_k$ are self-adjoint operators from  $L_2(\nu_G^\tau)$ to $L_2(\nu_G^\tau)$
	and thus they are reversible w.r.t. $\nu_G^\tau$.
	(Note that this also implies that $\PSW$ and $\SWT$ are reversible w.r.t.\ $\mu_G^\tau$.)
	Moreover, since the matrices $R$ and $Q_k$ assign spins u.a.r.\ to components of a joint configuration,
	we have $R = Q_kRQ_k$ and $Q_k^2 = Q_k= Q_k^*$.
	
	From the definition of the Dirichlet form \eqref{eq:df}
	we have for every function $f: \Omega_G^\tau \to \R$ that
	$\mathcal{E}_{\PSW}(f,f) = \inner{f}{(I-\PSW)f}{\mu_G^\tau}$
	and
	$\mathcal{E}_{\SWT}(f,f)= \inner{f}{(I-\SWT) f}{\mu_G^\tau}$.
	Using the properties of adjoint operators we get for $f: \Omega_G^\tau \to \R$,
	\begin{align}
	\inner{f}{\PSW f}{\mu_G^\tau}
	&= \inner{f}{T R T^* f}{\mu_G^\tau}
	= \inner{f}{T Q_kRQ_k T^* f}{\mu_G^\tau}
	= \inner{Q_kT^*f}{RQ_k T^* f}{\nu_G^\tau} \notag \\
	&\le \inner{Q_kT^*f}{Q_k T^* f}{\nu_G^\tau}
	= \inner{f}{T Q_k^2 T^* f}{\mu_G^\tau}
	=	\inner{f}{T Q_k T^* f}{\mu_G^\tau}
	\label{eq:sw:first-comp}
	\end{align}
	where the inequality follows from the Cauchy-Schwarz inequality.
	Since this holds for every $k$, we get
	$
	\inner{f}{\PSW f}{\mu_G^\tau} \le \inner{f}{\SWT f}{\mu_G^\tau}
	$
	and the result follows.
\end{proof}

We consider next the standard heat-bath block dynamics with respect to $\mathcal D$.
We use $\mathcal B_{\mathcal D}$ to denote its transition matrix.
Note intuitively that $\mathcal B_{\mathcal D}$ should be faster then $\SWT$; conversely, we should be able to simulate one step of $\mathcal B_{\mathcal D}$ on a block $D_k$ by repeatedly performing the  corresponding move of $\SWT$ on $D_k$. The number of such moves should be related to the spectral gap of the block SW dynamics on $D_k$ with a fixed boundary condition on $V \setminus D_k$.

To formalize this intuition, we consider one additional variant of the block SW dynamics for a fixed block. 
For each $k\in[m]$ and $\eta \in \Omega_G^\tau$, let $\Omega_{D_k}^\eta \subseteq \Omega_G^\tau$ be the collection of configurations that agree with $\eta$ on $V \setminus D_k$, 
and let $\mu_{D_k}^\eta$ be the conditional Potts distribution over $\Omega_{D_k}^\eta$. 
Consider the following Markov chain 
such that if $\sigma_t \in\Omega_{D_k}^\eta$,
then $\sigma_{t+1} \in \Omega_{D_k}^\eta$ is obtained as follows:
\begin{enumerate}
	\item Obtain $A_t \subseteq E$ by including each monochromatic edge of $E$ in $\sigma_t$ independently with probability $p$; 
	\item For each connected component $C$ of the graph $(V\cup \partial V, A_t)$ such that $C \subseteq D_k$ (i.e., those containing \emph{no} vertices from $V\setminus D_k$ or $\partial V$), we pick a new spin from $\{1,\dots,q\}$ u.a.r.\ and assign it to every vertex of $C$; 
	Vertices from other components keep their spin in $\sigma_t$. 
\end{enumerate}
This Markov chain is ergodic and is reversible w.r.t.\ to $\mu_{D_k}^\eta$. 
Denote the transition matrix of this chain by $\SWTC{k}{\eta}$. 
Observe that, adopting the notations from the proof of \cref{lem:SW-comp1}, $\SWTC{k}{\eta}$ is the restriction of the transition matrix $T Q_k T^*$ to the subspace $\Omega_{D_k}^\eta$; i.e., we can write $\SWTC{k}{\eta} = (T Q_k T^*) |_{\Omega_{D_k}^\eta}$. 
(Note that $T Q_k T^*$ corresponds to a \emph{reducible} Markov chain with state space $\Omega_G^\tau$, where each strongly connected component of the state space is $\Omega_{D_k}^\eta$ for some $\eta$.) 
Let
$$
\gamma_{\mathrm{min}} = \min_{k=1,\dots,m} \min_{\eta \in \Omega_G^\tau} \GAP(\SWTC{k}{\eta}).
$$
We can show the following.
\begin{lemma} 
	\label{lemma:sw2}
	For every function $f:\Omega_G^\tau \to \R$, we have
	$
	\mathcal{E}_{\SWT}(f,f) \ge \gamma_{\mathrm{min}}  \cdot \mathcal{E}_{\mathcal B_{\mathcal D}}(f,f).
	$
\end{lemma}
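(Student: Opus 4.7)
The plan is to decompose the Dirichlet form of $\SWT$ by conditioning on the configuration outside each block, then apply the variational characterization of the spectral gap to each conditional chain, and finally reassemble the pieces into the Dirichlet form of the standard heat-bath block dynamics.

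First I would observe that the matrix $T Q_k T^*$ is block-diagonal with respect to the partition of $\Omega_G^\tau$ into equivalence classes $\{\Omega_{D_k}^\eta : \eta \in \Omega_G^\tau\}$, since the step leaves all spins outside $D_k$ unchanged. Consequently, using $\SWT = \frac{1}{m}\sum_{k=1}^m T Q_k T^*$ and the fact that $\SWTC{k}{\eta} = (T Q_k T^*)|_{\Omega_{D_k}^\eta}$, the Dirichlet form factors as
\[
\mathcal{E}_{\SWT}(f,f)
= \frac{1}{m}\sum_{k=1}^{m} \sum_{\eta} \mu_G^\tau\bigl(\Omega_{D_k}^\eta\bigr)\,\mathcal{E}_{\SWTC{k}{\eta}}(f,f),
\]
where inside each $\mathcal{E}_{\SWTC{k}{\eta}}$ we view $f$ as a function on $\Omega_{D_k}^\eta$ and the stationary measure is $\mu_{D_k}^\eta$. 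This decomposition only uses the reversibility of $T Q_k T^*$ with respect to $\mu_G^\tau$ (already established in the proof of \cref{lem:SW-comp1}) and the definition of the Dirichlet form.

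Next I would apply the variational characterization $\mathcal{E}_{P}(f,f) \ge \GAP(P)\,\Var_\nu(f)$, valid for any reversible chain with absolute spectral gap $\GAP(P)$. Applied to each $\SWTC{k}{\eta}$, which is reversible w.r.t.\ $\mu_{D_k}^\eta$ with gap at least $\gamma_{\mathrm{min}}$, this gives
\[
\mathcal{E}_{\SWTC{k}{\eta}}(f,f) \ge \gamma_{\mathrm{min}}\cdot \Var_{D_k}^\eta(f).
\]
Plugging into the previous display and recognizing that $\sum_\eta \mu_G^\tau(\Omega_{D_k}^\eta)\Var_{D_k}^\eta(f) = \Ex[\Var_{D_k}(f)]$ yields
\[
\mathcal{E}_{\SWT}(f,f) \ge \gamma_{\mathrm{min}} \cdot \frac{1}{m}\sum_{k=1}^{m}\Ex[\Var_{D_k}(f)].
\]
Finally, using the standard identity $\mathcal{E}_{\mathcal B_{\mathcal D}}(f,f) = \frac{1}{m}\sum_{k=1}^m \Ex[\Var_{D_k}(f)]$ (cf.\ \eqref{eq:calE}) completes the proof.

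The argument is essentially bookkeeping once the two key ingredients are in place, so I do not anticipate a serious obstacle. The only subtle point is to justify that $T Q_k T^*$, viewed as a Markov chain on $\Omega_G^\tau$, is reducible with strongly connected components exactly $\Omega_{D_k}^\eta$ — so that the absolute spectral gap of the restriction $\SWTC{k}{\eta}$ is meaningful and the Dirichlet form splits cleanly. This follows from the definitions of $T$, $Q_k$, and $T^*$ given in the proof of \cref{lem:SW-comp1}, since $T$ preserves the spin configuration, $T^*$ only resamples edges consistent with that spin configuration, and $Q_k$ forbids any change outside $D_k$.
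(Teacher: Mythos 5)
Your proposal is correct and follows essentially the same route as the paper: decompose $\mathcal{E}_{\SWT}$ over blocks $k$ and boundary configurations $\eta$ into the conditional Dirichlet forms $\mathcal{E}_{\SWTC{k}{\eta}}(f,f)$, bound each below by $\GAP(\SWTC{k}{\eta})\Var_{D_k}^\eta(f)\ge\gamma_{\mathrm{min}}\Var_{D_k}^\eta(f)$, and reassemble the averaged conditional variances into $\mathcal{E}_{\mathcal B_{\mathcal D}}(f,f)$. The reducibility/block-diagonality observation you flag is exactly the fact the paper relies on when writing $\SWTC{k}{\eta}=(TQ_kT^*)|_{\Omega_{D_k}^\eta}$, so there is no gap.
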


\begin{proof}
	Let $f: \Omega_G^\tau \to \R$.
	From \eqref{eq:df} we get
	\begin{align*}
	{\mathcal E}_{\SWT}(f,f)
	&= \frac{1}{2} \sum_{\sigma,\sigma' \in \Omega_G^\tau} \mu_G^\tau(\sigma) \SWT(\sigma,\sigma') \cdot (f(\sigma) - f(\sigma'))^2\\
	&= \frac{1}{m} \sum_{k=1}^m \sum_{\eta \in \Omega_G^\tau} \mu_G^\tau(\eta) 
	\cdot \frac{1}{2} \sum_{\sigma,\sigma' \in \Omega_{D_k}^\eta} \mu_{D_k}^\eta(\sigma) \SWTC{k}{\eta}(\sigma,\sigma') \cdot (f(\sigma) - f(\sigma'))^2 \\
	&= \frac{1}{m} \sum_{k=1}^m \sum_{\eta \in \Omega_G^\tau} \mu_G^\tau(\eta) \cdot {\mathcal E}_{\SWTC{k}{\eta}}(f,f) \\
	&\geq \frac{1}{m} \sum_{k=1}^m \sum_{\eta \in \Omega_G^\tau} \mu_G^\tau(\eta) \cdot \GAP(\SWTC{k}{\eta}) \var{D_k}{\eta} (f) \\
	&\geq \gamma_{\mathrm{min}} \cdot \frac{1}{m} \sum_{k=1}^m \sum_{\eta \in \Omega_G^\tau} \mu_G^\tau(\eta) \var{D_k}{\eta} (f) \\
	&= {\gamma_{\mathrm{min}}} \cdot {\mathcal E}_{\mathcal B_\mathcal D}(f,f),
	\end{align*}
	as claimed.
\end{proof}

So far, we have not assumed anything about the geometry of the blocks in $\mathcal D$, so $\gamma_{\mathrm{min}}$ could be small (i.e., going to $0$ as $|V| \rightarrow \infty$). Our next result shows that $\gamma_{\textrm{min}} = \Omega(1)$ for a special class of block dynamics. 

Suppose that each block $D_k$ is such that $D_k = \cup_{j=1}^{\ell_k} D_{kj}$ where $\dist(D_{kj}, D_{kj'}) \ge 2$ for every $j \neq j'$.

This implies that 
for every $k$ and every $\eta \in \Omega_G^\tau$ the conditional Potts distribution 
$\mu_{D_k}^\eta$ is a product measure of all marginal distributions on each $D_{kj}$. 
We may write it as
$$\mu_{D_k}^\eta = \prod_{j=1}^{\ell_k} \mu_{D_{kj}}^\eta$$
where, with a slight abuse of notation, we view $\mu_{D_k}^\eta$ as a distribution over all configurations on $D_k$ (instead of over $\Omega_{D_k}^\eta$) and the same for $\mu_{D_{kj}}^\eta$'s. 
Let $\SWTC{kj}{\eta}$ be the transition matrix of the SW dynamics
on $D_{kj}$ with $\eta$ as the fixed boundary condition outside; so the stationary distribution of $\SWTC{kj}{\eta}$ is $\mu_{D_{kj}}^\eta$.

Note that since $D_k = \cup_{j=1}^{\ell_k} D_{kj}$ where $\dist(D_{kj}, D_{kj'}) \ge 2$ for every $j \neq j'$, after adding the edges in step 2  of $\SWTC{k}{\eta}$, every component contained in $D_k$
is fully contained in exactly one $D_{kj}$. Therefore,
$\SWTC{k}{\eta}$ is a product Markov chain of all $\SWTC{kj}{\eta}$'s; 
that is, each step of $\SWTC{k}{\eta}$ on $D_k$ is equivalent to applying one update of $\SWTC{kj}{\eta}$ on $D_{kj}$ simultaneously and independently for all $j$.

\begin{proposition}
	For $k \in [m]$ and $\eta \in \Omega(V \setminus D_k)$
	let $\sigma = (\sigma_1,\dots,\sigma_{\ell_k})\in \Omega(D_k)$,
	$\sigma' = (\sigma_1',\dots,\sigma_{\ell_k}')  \in \Omega(D_k)$
	where $\sigma_j,\sigma_j' \in \Omega(D_k^{j})$. Then
	$$
	\SWTC{k}{\eta}(\sigma,\sigma') = \prod_{j=1}^{\ell_k} \SWTC{kj}{\eta}(\sigma_j,\sigma_j').
	$$
\end{proposition}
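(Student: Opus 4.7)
The plan is to prove the proposition by conditioning on the random bond set $A_t \subseteq E$ produced in step~1 of $\SWTC{k}{\eta}$ and showing that, thanks to the distance condition $\dist(D_{kj},D_{kj'}) \ge 2$, both the bond step and the spin-resampling step factor into independent contributions from each $D_{kj}$.

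First I would write
\[
\SWTC{k}{\eta}(\sigma,\sigma') = \sum_{A \subseteq M(\sigma,\eta)} p^{|A|}(1-p)^{|M(\sigma,\eta)\setminus A|} \cdot R_{k}^\eta(A;\sigma,\sigma'),
\]
where $M(\sigma,\eta) \subseteq E$ denotes the monochromatic edges of the joint configuration $(\sigma,\eta)$ and $R_{k}^\eta(A;\sigma,\sigma')$ is the probability that the spin-resampling step on components of $(V \cup \partial V, A)$ fully contained in $D_k$ takes $\sigma$ to $\sigma'$. Edges lying entirely in $V \setminus D_k$ never contribute to any component contained in $D_k$, so their activations are irrelevant and can be summed out to $1$; I would therefore restrict attention to $A \subseteq E(D_k) \cup E(D_k, V \setminus D_k)$.

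Next I would exploit the tiling structure. Because $\dist(D_{kj},D_{kj'}) \ge 2$, there are no edges in $E$ between distinct $D_{kj}$'s, so
\[
E(D_k) = \bigsqcup_{j=1}^{\ell_k} E(D_{kj}), \qquad E(D_k, V\setminus D_k) = \bigsqcup_{j=1}^{\ell_k} E(D_{kj}, V\setminus D_k).
\]
Writing $A = \bigsqcup_j A^{(j)}$ accordingly, I would observe three factorizations: (i) the set of monochromatic edges splits as $M(\sigma,\eta) \cap (E(D_{kj}) \cup E(D_{kj},V\setminus D_k))$, which depends only on $\sigma_j$ and $\eta$; (ii) the product of independent Bernoulli$(p)$ activations factors as $\prod_j p^{|A^{(j)}|}(1-p)^{|M_j \setminus A^{(j)}|}$, where $M_j$ is the relevant monochromatic set; and (iii) every connected component of $(V\cup\partial V,A)$ that lies in $D_k$ must lie entirely in a single $D_{kj}$, so the set of components to be resampled is the disjoint union of those arising inside each $D_{kj}$. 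Hence $R_k^\eta(A;\sigma,\sigma') = \prod_j R_{kj}^\eta(A^{(j)};\sigma_j,\sigma'_j)$, where $R_{kj}^\eta$ is the analogous resampling kernel for $\SWTC{kj}{\eta}$.

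Plugging these factorizations into the sum and interchanging the sum with the product over $j$ yields
\[
\SWTC{k}{\eta}(\sigma,\sigma') = \prod_{j=1}^{\ell_k} \sum_{A^{(j)}} p^{|A^{(j)}|}(1-p)^{|M_j \setminus A^{(j)}|} R_{kj}^\eta(A^{(j)};\sigma_j,\sigma'_j) = \prod_{j=1}^{\ell_k} \SWTC{kj}{\eta}(\sigma_j,\sigma'_j),
\]
which is exactly the claim. The only subtlety I anticipate is bookkeeping the role of edges in $E(D_{kj}, V\setminus D_k)$: these edges do affect whether vertices in $D_{kj}$ get re-sampled (since activating one attaches a vertex to the boundary component), but the activation probability and its effect only depend on $\sigma_j$ and $\eta$, so they are correctly absorbed into the $j$-th factor $\SWTC{kj}{\eta}$ whose boundary is $\eta$. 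Once this is made explicit, no further estimates are needed.
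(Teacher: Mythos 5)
Your proof is correct and follows essentially the same route as the paper, which justifies the proposition by the single observation that, since $\dist(D_{kj},D_{kj'})\ge 2$, every open component contained in $D_k$ lies inside exactly one $D_{kj}$, so the edge-percolation and spin-resampling steps factor independently over the sub-blocks; your argument just makes this explicit by conditioning on the bond configuration. The only harmless bookkeeping slip is that the edges leaving $D_{kj}$ may also go to $\partial V$, not only to $V\setminus D_k$, but these are handled identically since their status is likewise determined by $\sigma_j$ and the fixed boundary.
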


The following is then a standard result
for the gap of product Markov chains (see, e.g., Lemma 4.7 in~\cite{BCSV}).

\begin{lemma} 
	\label{lemma:sw-3}
	For every $k \in [m]$ and $\eta \in \Omega_G^\tau$, 
	\[\GAP(\SWTC{k}{\eta}) = \min_{j=1,\dots,\ell_k} \GAP(\SWTC{kj}{\eta}).\]
\end{lemma}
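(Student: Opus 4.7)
The plan is to invoke the standard spectral theory of product (tensor) Markov chains, which is precisely the setting handed to us by the preceding proposition. That proposition shows that, as a matrix acting on $\Omega(D_k)$, the chain $\SWTC{k}{\eta}$ has the tensor product form
\[
\SWTC{k}{\eta} = \bigotimes_{j=1}^{\ell_k} \SWTC{kj}{\eta},
\]
because the blocks $D_{kj}$ are at pairwise graph distance at least $2$, so no monochromatic edge produced in step 2 can straddle two distinct $D_{kj}$'s, and the conditional measure $\mu_{D_k}^\eta$ factorizes accordingly. Once this factorization is in hand, the lemma is essentially a direct application of Lemma 4.7 of \cite{BCSV}.

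My first step will be to check that each factor $\SWTC{kj}{\eta}$ is reversible w.r.t.\ $\mu_{D_{kj}}^\eta$ and positive semidefinite; reversibility is already noted in the text, and positive semidefiniteness is inherited from the decomposition $TQ_{kj}T^*$ used in the proof of \cref{lem:SW-comp1}, since $Q_{kj}$ is self-adjoint and idempotent (hence PSD) in $L_2(\nu)$, and therefore $\langle f, TQ_{kj}T^*f\rangle_{\mu} = \langle T^*f, Q_{kj} T^*f\rangle_{\nu} \ge 0$. Consequently each factor has spectrum in $[0,1]$, and the same is true for the tensor product.

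Next I will diagonalize. The eigenvalues of the tensor product $\SWTC{k}{\eta}$ are exactly the products $\prod_{j=1}^{\ell_k} \lambda_j$ where each $\lambda_j$ is an eigenvalue of the corresponding factor $\SWTC{kj}{\eta}$, with eigenfunctions of the form $\prod_j f_j(\sigma_j)$. The top eigenvalue is $1$, attained by taking every $\lambda_j = 1$. The second largest eigenvalue is obtained by keeping all factors at $\lambda_j = 1$ except one, which is set to the second-largest eigenvalue of that factor; since all factor eigenvalues lie in $[0,1]$, any other choice can only decrease the product. Hence $\lambda_2(\SWTC{k}{\eta}) = \max_j \lambda_2(\SWTC{kj}{\eta})$, and because all the chains are PSD the absolute spectral gap coincides with $1 - \lambda_2$, giving
\[
\GAP(\SWTC{k}{\eta}) = 1 - \max_{j} \lambda_2(\SWTC{kj}{\eta}) = \min_{j=1,\dots,\ell_k} \GAP(\SWTC{kj}{\eta}),
\]
as desired.

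There is no real obstacle here beyond the bookkeeping: the only nontrivial ingredient, namely the product factorization of transitions, is supplied by the proposition immediately preceding the statement, and the PSD check is a one-line observation using the joint-space decomposition already developed in \cref{lem:SW-comp1}. The proof is therefore a short citation of the tensor-chain spectral gap formula.
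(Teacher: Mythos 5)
Your proposal is correct and matches the paper's treatment: the paper establishes the product structure via the preceding proposition and then simply cites the standard spectral-gap formula for product chains (Lemma 4.7 of \cite{BCSV}), which is exactly the tensor-diagonalization argument you spell out. Your PSD check via the joint-space decomposition is a fine (and valid) way to justify that the absolute gap equals $1-\lambda_2$ and that the second eigenvalue of the product is $\max_j \lambda_2(\SWTC{kj}{\eta})$.
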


Finally, we observe that for any configuration $\sigma \in \Omega_{D_{kj}}^\eta$,
in the percolation step of $\SWTC{kj}{\eta}$ from $\sigma$ every edge in $E_{kj}$ could be absent with probability
at least $1-p$, where $E_{kj}$ is the set of edges with at least one endpoint in $D_{kj}$.
Then, any two configurations $\sigma, \sigma' \in \Omega_{D_{kj}}^\eta$ can be coupled in one step with probability $\exp(-\beta |E_{kj}|)$. 
Thus, we obtain the following.
\begin{lemma} 
	\label{lemma:sw-4}
	There exists a constant $c := c(\beta)$ such that for every $k \in [m]$, $j \in [\ell_k]$, and $\eta \in \Omega_G^\tau$, 
	\[\GAP(\SWTC{kj}{\eta}) \ge \exp(-c |E_{kj}|).\]
\end{lemma}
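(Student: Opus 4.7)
The plan is to exhibit a one-step coupling between two executions of $\SWTC{kj}{\eta}$ started from arbitrary $\sigma,\sigma' \in \Omega_{D_{kj}}^\eta$ that produces identical configurations after one step with probability at least $e^{-\beta|E_{kj}|}$. Once this is done, I will convert the one-step contraction into a spectral-gap bound via the standard fact that every non-principal eigenvalue of a Markov operator is bounded in absolute value by its Dobrushin coefficient $\kappa(P) = \sup_{x,y}\|P(x,\cdot)-P(y,\cdot)\|_{\textsc{tv}}$; this will yield the claim with $c = \beta$.

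For the coupling I will use shared randomness across the two chains: for each edge $e$ draw $U_e \sim \mathrm{Unif}[0,1]$ and for each vertex $v \in D_{kj}$ draw $V_v \sim \mathrm{Unif}([q])$, and run both chains against these same variables. In either chain, $e$ is placed in $A_t$ iff $e$ is monochromatic in the current configuration and $U_e < p$, and every singleton component contained in $D_{kj}$ is assigned the spin $V_v$. The key event is $\mathcal{G} = \{U_e \geq p \text{ for every } e \in E_{kj}\}$, which has probability exactly $(1-p)^{|E_{kj}|} = e^{-\beta|E_{kj}|}$ and is independent of the starting configurations.

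On $\mathcal G$, no edge of $E_{kj}$ enters $A_t$ in either chain (the choice is forced on non-monochromatic edges and imposed on monochromatic ones by $\mathcal G$). Because $E_{kj}$ contains every edge incident to $D_{kj}$, each $v \in D_{kj}$ is then an isolated component of $(V \cup \partial V, A_t)$ in both chains, and this singleton is wholly contained in $D_{kj}$ (it meets neither $V\setminus D_{kj}$ nor $\partial V$); step 2 accordingly sets its spin to $V_v$ identically in the two chains. Combined with the fact that $\sigma$ and $\sigma'$ already agree with $\eta$ on $V \setminus D_{kj}$, both chains coincide at time~$1$. Hence $\kappa(\SWTC{kj}{\eta}) \le 1 - e^{-\beta|E_{kj}|}$, which yields $\GAP(\SWTC{kj}{\eta}) \ge e^{-\beta|E_{kj}|}$, proving the lemma with $c=\beta$.

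There is no real obstacle here: the argument boils down to the observation that, with probability at least $e^{-\beta|E_{kj}|}$, a single step of $\SWTC{kj}{\eta}$ annihilates every edge touching $D_{kj}$ and resamples each vertex of $D_{kj}$ from the uniform product distribution on $[q]^{D_{kj}}$, regardless of the starting configuration. The only mildly delicate point is translating the one-step coupling probability into a bound on the \emph{absolute} spectral gap, but this is immediate from $|\lambda_i| \le \kappa(P)$ for every non-principal eigenvalue (proved in one line using the oscillation seminorm) or, alternatively, by combining $\tmix = O(1/\alpha)$ with \eqref{prelim:eq:gap-lower}.
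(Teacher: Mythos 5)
Your proposal is correct and follows essentially the same route as the paper: the paper's proof is exactly the observation that with probability at least $(1-p)^{|E_{kj}|} = e^{-\beta|E_{kj}|}$ every edge of $E_{kj}$ is absent in the percolation step, so any two starting configurations couple in one step, giving the stated gap bound. You merely make explicit the standard conversion from the one-step coupling probability to the absolute spectral gap (via the Dobrushin contraction coefficient), which the paper leaves implicit.
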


Combining the results in this section, we can now prove our main comparison result in \cref{thm:sw-block}. 


\begin{proof}[Proof of \cref{thm:sw-block}]
	From \cref{lem:SW-comp1,lemma:sw2} it follows that
	for every function $f:\Omega_G^\tau \to \R$, we have
	$$
	\mathcal{E}_{\PSW}(f,f) \ge \gamma_{\mathrm{min}}  \cdot \mathcal{E}_{\mathcal B_{\mathcal D}}(f,f).
	$$
	The matrix $\mathcal B_{\mathcal D}$ is positive semidefinite since it is a heat-bath block dynamics (see, e.g., Fact 3.3 in~\cite{BCSV}). The transition matrix $\PSW$ is also positive semidefinite since as in \eqref{eq:sw:first-comp} we have
	\begin{align*}
	\inner{f}{\PSW f}{\mu_G^\tau}
	&= \inner{f}{T R T^* f}{\mu_G^\tau}
	= \inner{RT^*f}{RT^* f}{\nu_G^\tau} \ge 0
	\end{align*}
	for any $f: \Omega_G^\tau \to \R$. 
	Hence, we derive 
	$$
	\GAP(\PSW) \ge \gamma_{\mathrm{min}}  \cdot \GAP(\mathcal B_{\mathcal D}).
	$$
	\cref{lemma:sw-3,lemma:sw-4} imply that $\gamma_{\mathrm{min}} = \exp(-O(\mathrm{vol}(\mathcal D)))$ and the result follows. 
\end{proof}

\section{Random-cluster dynamics}
\label{sec:rc}

In this section we establish our result for the random-cluster dynamics on the wired tree.
In particular, we prove \Cref{thm:rc:intro} from the introduction.
We formally define first the \emph{random-cluster model} on a finite graph $G = (V(G),E(G))$.
Configurations of the random-cluster model are subsets of $E(G)$ 
and we use $\RC(G)$ for the set of all random-cluster configurations of $G$.
A boundary condition 
for the random-cluster model on $G$ is a partition $\xi = \{\xi_1,\xi_2,\dots\}$ 
of the boundary $\partial G \subseteq V(G)$ of $G$, such that all vertices in each $\xi_i$ are always in the same connected component of any configuration.
(We can think of the vertices in $\xi_i$ as being connected to a single external vertex.)
Given parameters $p \in (0, 1)$, $q > 0$ and a boundary condition $\xi$, 
the random-cluster distribution assigns to each $A \in \RC(G)$ a probability given by 
\begin{equation}\label{eq:rc}
\pi^\xi_G(A)=\frac1{Z_{\textsc{rc}}(G)}\,p^{|A|}(1-p)^{|E(G) \setminus A|}q^{c^\xi(A)},
\end{equation}
where $c^\xi(A)$ is the number of connected components in $(V(G),A)$,
taking into account the connections given by $\xi$; 
$Z_{\textsc{rc}}(G)$ is the corresponding partition function.

In the case when $G$ is the finite complete $d$-ary tree $T=(V(T),E(T))$, we set
the boundary condition on the external boundary $\partial T$.
The case where every element of $\xi$ is a single vertex corresponds to the free boundary condition (i.e., no external connections). 
In this setting, $\pi_T^\xi$ becomes the independent bond percolation on $T$ with parameter $\frac{p}{q(1-p)+p}$; see~\cite{Haggstrom}.
We shall focus on the case of the wired boundary condition, where $\xi = \{\partial T\}$. 
We adopt the common notation $\xi = 1$ for this case and denote by $\pi^1$ the random-cluster distribution on $T$ with wired boundary.
We note that 
$\pi^1$ has quite interesting phase transitions including three different critical thresholds; see~\cite{Haggstrom,J} for more details.

We consider two standard Markov chains for the random-cluster model.
This is the standard Markov chain that transitions from a random-cluster configuration $A_t \subseteq E(T)$ to a new configuration $A_{t+1}\subseteq E(T)$ as follows:
\begin{enumerate}
	\item choose an edge $e\in E$ uniformly at random;
	\item let $A_{t+1} = A_t \cup \{e\}$ with probability
	$$
	\frac{\pi^1(A_t \cup \{e\})}{\pi^(A_t \cup \{e\})+\pi^(A_t \setminus \{e\})} = \left\{\begin{array}{ll}
	\frac{p}{q(1-p)+p} & \mbox{if $e$ is a ``cut-edge'' in $(V,A_t)$;} \\
	p & \mbox{otherwise;}
	\end{array}\right.
	$$
	\item otherwise, let $A_{t+1} = A_t \setminus \{e\}$;
\end{enumerate}
the edge $e$ is a {\it cut-edge} in $(V(T),A_t)$ if the number of connected components in $A_t\cup \{e\}$ and $A_t \setminus \{e\}$ differ.
We prove the following result concerning the mixing time of edge heat-bath dynamics on the wired tree.

\begin{lemma}
	\label{lem:rc:glauber}
	For all integer $q\geq 2$, all $p \in (0,1)$, and all $d\geq 3$, for the random-cluster model on an $n$-vertex
	complete $d$-ary tree with {\em wired boundary condition},
	the mixing time of the heat-bath Glauber dynamics is $O(n \log{n})$.
\end{lemma}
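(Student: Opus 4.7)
The plan is to reduce the problem to the entropy-factorization machinery already developed for the Potts model, going through the joint Edwards--Sokal distribution. The key observation is that, with wired boundary on the edge side, the edge marginal of the joint Edwards--Sokal distribution $\nu_T^s$ with all-$s$ monochromatic spin boundary (for any $s \in [q]$) is exactly $\pi^1$. It therefore suffices to establish an approximate tensorization of entropy for $\pi^1$, of the form
\[
\Ent_{\pi^1}(f) \le C \sum_{e\in E(T)} \Ex_{\pi^1}[\Ent_e(f)]
\]
for a constant $C$ independent of $n$; such a single-edge factorization is equivalent to an $\Omega(1/n)$ modified log-Sobolev constant for the heat-bath edge dynamics, which by a standard comparison gives $\tmix = O(n\log n)$ since $\log(1/\pi^1_{\min}) = O(n)$.

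First I would invoke part (ii) of \cref{thm:cor}: under the monochromatic boundary $\tau = s$, the EM condition holds for $\mu_T^s$ for all $\beta$. \cref{thm:EM-EOF} then gives the even--odd factorization of entropy for $\mu_T^s$. Next I would lift this spin-side factorization to a factorization of entropy for the joint distribution $\nu_T^s$, as in \cite{BCPSVstoc}. The lifting exploits the conditional-product structure of the Edwards--Sokal coupling: conditional on $\sigma$, the edges are independent bond percolation on $M(\sigma)$, while conditional on $A$, spins of distinct free components are independent uniform over $[q]$ (components touching $\partial T$ have their spin fixed by the wiring/boundary). This structure converts even--odd entropy factorization on the spin side into a factorization over spins and single edges on the joint side.

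Finally I would project the joint factorization down to a single-edge factorization for $\pi^1$ by integrating out the spins: the spin-entropy terms vanish on functions that depend only on edges, and the edge-entropy terms in the joint distribution dominate $\Ex_{\pi^1}[\Ent_e(f)]$ up to a constant. Summing over edges yields the desired tensorization, and a standard entropy/Dirichlet-form inequality gives the $O(n\log n)$ mixing-time bound for the heat-bath edge dynamics. The main obstacle is the lifting/projection step: one must carefully use the Edwards--Sokal coupling so that the even--odd spin-side contributions can be charged to single-edge entropies on the edge side without losing a factor growing with $n$, using the fact that boundary-incident components have a wired spin that is not free and so do not inflate the bound. The remainder of the argument is a routine MLSI-to-mixing-time translation.
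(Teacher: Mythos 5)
Your proposal follows essentially the same route as the paper's proof: EM under the monochromatic boundary yields the even--odd spin factorization (\cref{thm:EM-EOF}), which is lifted to a joint Edwards--Sokal entropy factorization as in \cite{BCPSVnew}, projected onto edge-only functions (whose edge marginal is exactly $\pi^1$), and converted into an $\Omega(1/n)$ entropy--Dirichlet inequality for the heat-bath edge dynamics, giving $O(n\log n)$ mixing. The only difference is in the implementation of the last comparison: the paper routes through Ullrich's single-bond dynamics \cite{Ullrich} and a Bernoulli variance/entropy comparison \cite{DSC}, while you phrase it as approximate tensorization of $\Ent_{\pi^1}$ into single-edge entropies followed by the standard translation to mixing time.
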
	

We also consider a natural variant of the SW dynamics on random-cluster configurations reversible with respect to $\pi^1$ defined as follows.
Given a random-cluster configuration $A_t \in \RC(T)$, the next configuration $A_{t+1}$ is obtained by:
\begin{enumerate}
	\item For each connected component $C$ in $(V(T \cup \partial T),A_t)$ with wired boundary condition (i.e., components containing a vertex from $\partial T$ are regarded as a single component), 
	we a pick a new spin from $\{1,\dots,q\}$ u.a.r.\ and assign it to every vertex of $C$; this gives a spin configuration $\sigma_t \in [q]^{T \cup \partial T}$;
	\item Obtain $A_{t+1} \subseteq E(T \cup \partial T)$ by including each monochromatic edge of $E(T \cup \partial T)$ in $\sigma_t$ independently with probability $p$. 
\end{enumerate}

We also prove that the mixing time of the random-cluster SW dynamics is $O(\log n)$ for all $p$ and all integer $q \ge 2$.

\begin{lemma}
	\label{lem:rc:sw}
	For all integer $q\geq 2$, all $p \in (0,1)$, and all $d\geq 3$, for the random-cluster model on an $n$-vertex
	complete $d$-ary tree with {\em wired boundary condition},
	the mixing time of the SW dynamics is $O(\log{n})$.
\end{lemma}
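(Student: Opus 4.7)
The plan is to combine the EM condition for monochromatic boundary with the joint spin-edge entropy factorization framework of~\cite{BCPSVstoc}. First, recall from~\cite{MSW04,MSW-Potts} that the EM condition holds on the complete $d$-ary tree under any monochromatic Potts boundary for all $\beta>0$. By~\cref{thm:EM-EOF}, this yields the even-odd factorization of entropy for the Potts distribution $\mu_T^{\mathrm{mono}}$, and hence (by~\cref{thm:entropy-main}) $O(\log n)$ mixing of the Potts SW dynamics with monochromatic boundary for every integer $q\ge 2$ and $d\ge 3$.

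The next step is to connect this to the wired random-cluster distribution $\pi^1$ through the Edwards-Sokal coupling. Summing the joint measure $\nu^{\mathrm{mono}}(A,\sigma) \propto p^{|A|}(1-p)^{|E(T\cup\partial T)|-|A|}\,\1(A\subseteq M(\sigma))$, with $\sigma$ held monochromatic on $\partial T$, over the spin coordinate gives a weight proportional to $p^{|A|}(1-p)^{|E(T\cup\partial T)|-|A|}q^{c^*(A)}$, where $c^*(A)$ counts the connected components of $(V(T\cup\partial T),A)$ that do not touch $\partial T$. Wiring $\partial T$ merely collapses the boundary into a single super-component and contributes an overall factor of $q$, so this matches $\pi^1(A)$ up to normalization. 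In particular, $\pi^1$ is the edge marginal of $\nu^{\mathrm{mono}}$, and the random-cluster SW dynamics is the edge-projection of the joint Edwards-Sokal chain that alternates a percolation step with a recoloring step.

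Finally, following the approach of~\cite{BCPSVstoc}, I would lift the even-odd factorization of Potts entropy to a factorization of entropy in the joint space $\JC$. The recoloring step is a product measure over connected components, so an entropy-tensorization argument lets one promote the factorization for $\mu_T^{\mathrm{mono}}$ to a factorization for $\nu^{\mathrm{mono}}$, and the edge marginalization then passes it to $\pi^1$ in a form adapted to the random-cluster SW transitions. This controls the modified log-Sobolev constant of the random-cluster SW chain, yielding the $O(\log n)$ mixing-time bound. The main obstacle is precisely this last step: the edge-projection of one step of the joint chain is not literally one step of random-cluster SW, so one must set up the alternation between edge and spin updates carefully (as in~\cite{BCPSVstoc}, mirroring the decomposition $\PSW = TRT^\ast$ used in~\cref{lem:SW-comp1}) to avoid losing a $\mathrm{poly}(n)$ factor when translating the factorization into a mixing bound.
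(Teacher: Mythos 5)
Your proposal follows essentially the same route as the paper's proof: establish the even-odd factorization of entropy for the Potts measure with monochromatic boundary via \cref{thm:EM-EOF} (EM holds at all temperatures for this boundary), identify the wired random-cluster measure as the edge marginal of the corresponding Edwards-Sokal joint measure, and then invoke the machinery of~\cite{BCPSVnew,BCPSVstoc} to lift the spin factorization to the joint spin-edge space and deduce entropy contraction for the random-cluster SW chain. The step you flag as the main obstacle is exactly what the paper handles by showing that, on functions of the edges alone, the composition of the spin-resampling and edge-resampling operators $KQ$ coincides with the random-cluster SW transition matrix, and by using Lemma~5.1 of~\cite{BCPSVnew} to pass from contraction of $(K+Q)/2$ to contraction of $KQ$.
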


Note that Lemmas~\ref{lem:rc:glauber} and~\ref{lem:rc:sw} imply~\Cref{thm:rc:intro} from the introduction.

Our goal is to study random-cluster dynamics under the wired boundary condition,
so it will be convenient for us to consider the graph $\hat T = (V(\hat T),E(\hat T))$
that results from adding an external vertex $w$ to $T$ connected to every vertex in $\partial T$. 
Let $W$ be the set of edges between $w$ and $\partial T$.
Let us consider the joint Edwards-Sokal measure on $\hat T$; see~\eqref{eq:joint}. Specifically we are interested in the conditional measure $\nu := \nu_{\hat T}(\cdot \mid \sigma_w=1,A(W)=1)$
and its spin and edge marginals: $\mu_{\hat T}$ and $\pi_{\hat T}$, respectively.
The spin marginal $\mu_{\hat T}$ is supported on the set 
of configurations $\sigma \in [q]^{V(\hat T)}$
where $\sigma(\partial T \cup \{w\}) = 1$; let $\hat \Omega$ denote this set.

Let $\mu^1$ be the Potts distribution on $T$ with ``all 1'' boundary condition
on $\partial T$ and 
let $\Omega^1$ be the support of $\mu^1$;
note that $\Omega^1 \times \{1\} = \hat\Omega$.

We provide first the proof of \Cref{lem:rc:sw}.

\begin{proof}[Proof of \Cref{lem:rc:sw}]
	By~\cref{thm:EM-EOF} and the fact that EM holds at all temperatures 
	under a monochromatic boundary condition, 
	we have that for every function $f: \Omega^1 \to \R_{\ge 0}$ 
	\begin{equation}\label{eq:ent:spin}
	\Ent_{\mu^1}(f) \le C_{\textsc{eo}} \left( \mu^1[\Ent_E(f)] + \mu^1[\Ent_O(f)] \right)
	\end{equation}
	where $C_{\textsc{eo}}$ is independent of $n$. 
	From this, it follows that 
	any function $\hat f: \hat \Omega \to \R_{\ge 0}$
	\begin{equation}\label{eq:ent:spin:marg}
	\Ent_{\mu_{\hat T}}(\hat f) \le C_{\textsc{eo}} \left( \mu_{\hat T}[\Ent_{\hat E}(\hat f)] + \mu_{\hat T}[\Ent_{\hat O}(\hat f)] \right),
	\end{equation}
	where $\hat E \subseteq \hat T$ is the set of all even vertices of $\hat T$ and $\hat O = \hat T \setminus \hat E$ is the set of odd vertices. 
	
	Let $\JCH$ denote the joint spin-edge configurations on $\hat T$.
	From~\cite{BCPSVnew}, we know that the even-odd factorization entropy of the spin marginal in~\eqref{eq:ent:spin:marg} implies 
	that for all functions $\hat g: \JCH \to \R_+$,
	\begin{align}\label{entfact2o-intro}
	\Ent_{\nu} (g)\leq C \,\left(\nu\left[\Ent_{\nu} (g\mid\sigma)\right] +\nu\left[\Ent_{\nu} (g \mid A)\right]\right),
	\end{align}
	where $C = C(C_{\textsc{eo}},d,\beta) \ge 1$.
	
	We proceed now as in~\cite{BCPSVnew},
	where the consequences of the above spin-edge factorization of entropy for dynamics of the random-cluster model are explored.
	We consider first the SW dynamics in the joint space $\JCH \times \JCH$. 
	Let $K$ denote the $\JCH \times \JCH$ stochastic matrix corresponding
	to re-sampling the spins of a joint configuration given the edges;
	let $Q$ be the stochastic matrix corresponding to re-sampling the edges given the spins. 
	That is, 
	\begin{align*}
	K ((\sigma,A),(\tau,B)) &= \1(A=B) \nu (\tau \mid A ) \\
	Q ((\sigma,A),(\tau,B)) &= \1(\sigma=\tau) \nu (B \mid \sigma ).
	\end{align*}
	
	Let $P = \frac{K+Q}{2}$ and let $g: \JCH \to \R_+$ such that $\nu[g] = 1$.
	The  convexity of $x \log x$ implies 
	\begin{align}
	\label{eq:convex}
	Pg\log (Pg)\leq \frac12 Kg\log (K g) + \frac12 Qf\log (Q g).
	\end{align}
	Since $\nu[g]=1$, we have $\nu[Pg]=\nu[Kg]=\nu[Qg]=1$.
	Taking expectations with respect to $\nu$ in \eqref{eq:convex} we obtain
	\begin{align}
	\label{convex-exp}
	\Ent_\nu( Pg )\leq \frac12\left[ \Ent_\nu(Kg)+\Ent_\nu(Qg)\right].
	\end{align}
	The following identities follow from standard decompositions of the entropy functional:
	\begin{align*}
	\Ent_\nu(g) &= \Ent_\nu(\nu[g|A]) + \nu[\Ent_\nu(g|A)] ;\\
	\Ent_\nu(g) &= \Ent_\nu(\nu[g|\sigma]) + \nu[\Ent_\nu(g|\sigma)].
	\end{align*}
	Noting that $Kg (\sigma,A)= \nu(g\mid A)$ and $Qg (\sigma,A)= \nu(g\mid\sigma)$,
	we obtain
	\begin{align*}
	\Ent_\nu(g)=\Ent_\nu(Kg)+\nu\left[\Ent_\nu(g\mid A)\right]= \Ent_\nu(Qg)+\nu\left[\Ent_\nu(g\mid \sigma)\right].
	\end{align*}
	Then, \eqref{convex-exp} becomes
	\begin{align}\label{convj2}
	\Ent_\nu( Pg ) \leq \Ent_\nu(g) - \frac12\nu\left[\Ent_\nu(g\mid A)+ \Ent_\nu(g\mid \sigma)\right],
	\end{align}
	and~\eqref{entfact2o-intro} implies $\Ent_\nu( Pg)\leq (1-\delta)\Ent_\nu(g)$
	with $\delta=1/2C$. From Lemma~5.1 in~\cite{BCPSVnew} it follows that
	\begin{align}
	\label{eq:ent:sw:cont}
	\Ent_\nu( KQg)\leq (1-\delta)\Ent_\nu(g).
	\end{align}
	
	Now, let $\PSWE$ denote the transition matrix of the SW dynamics for the random-cluster model on $\hat T$ (conditioning on $A(W)=1$); that is, the SW dynamics for the edge marginal of $\nu$. 
	It is straightforward to check that if the function $g$ depends only on the edge configuration, and $\tilde g$ is the projection of $g$ to the edges (i.e., $g(\sigma,A) = \tilde g(A)$), we have $\PSWE \tilde g (A)=  KQg (\sigma,A).$
	Therefore, for any function $g\geq 0$ depending only on the edge configuration, and such that  $\nu[g] = \pi_{\hat T}[\tilde g]=1$, one has
	\begin{align*}
	\Ent_{\pi_{\hat T}}(\PSWE \tilde g) = \pi_{\hat T}[(\PSWE \tilde g) \log (\PSWE \tilde g) ]= \nu [(KQg)\log( KQ g)] = \Ent_\nu(KQg).
	\end{align*}
	From \eqref{eq:ent:sw:cont} we then get
	$$
	\Ent_{\pi_{\hat T}}(\PSWE \tilde g) \le (1-\delta)\Ent_\nu(g) = (1-\delta)\Ent_{\pi_{\hat T}}(\tilde g).  
	$$
	This show that $\PSWE$ contracts the entropy at a constant rate which implies that the mixing time of the SW dynamics for  is $O(\log n)$.
	The result follows by noting that the SW dynamics for $\pi_{\hat T}$
	and the SW dynamics for $\pi^1$ are the same Markov chain and thus
	have the same mixing time.
\end{proof}

We conclude with the proof of \Cref{lem:rc:glauber}.

\begin{proof}[Proof of \Cref{lem:rc:glauber}]
	We introduce the following auxiliary Glauber Markov chain, known as the \emph{single bond dynamics}. This chain was introduced in~\cite{Ullrich} and is quite useful for comparing random-cluster dynamics to Markov chains of the joint space. 
	In one step of the single bound dynamics, every connected component is assigned a spin from $[q]$ uniformly at random; a random edge $e$ is then chosen and if the endpoints of $e$ are monochromatic the edge is added to the configuration with probability $p$ and removed otherwise. The state of $e$ does not change if its endpoints are bi-chromatic.
	
	Let $\PSB$ denote the transition matrix of the single bond dynamics on $\hat T$, which is reversible with respect to $\pi_{\hat T}$; see~\cite{Ullrich}.
	Let $\Omega(\hat T)^1$ be the support of $\pi_{\hat T}$.
	For a function $g: \JCH \to \R_{\ge 0}$,
	let $\tilde g$ denote its projection to the edges, i.e., $g(\sigma, A) = \tilde g(A)$.
	The Dirichlet form associated to the single bond chain satisfies
	\begin{align}
	\label{eq:rc:df1}
	\D_{\PSB}(\tilde g,\tilde g)
	&= 
	\frac1{|E(\hat T)|}\sum_{e\in E(\hat T)}\nu\left[\Var_\nu(g \mid\sigma,A(E(\hat T) \setminus{e})\right];
	\end{align}
	see~(7.5) in~\cite{BCPSVnew}.
	
	Let $\PHB$ be the transition matrix of the heat-bath edge Glauber dynamics on $\hat T$.
	Since for every $A,B \subseteq E(\hat T)$, the transition probabilities $\PSB(A,B)$ and $\PSB(A,B)$ differ by at most a multiplicative factor that depends only on $q$ and $p$, the Dirichlet forms of these chain satisfy:
	\begin{align}
	\label{eq:rc:df2}
	c_1\D_{\PSB}( \tilde g, \tilde g)\leq \D_{\PHB}( \tilde g, \tilde g)\leq c_2\D_{\PSB}( \tilde g, \tilde g), 
	\end{align}
	for any function $\tilde g: \Omega(\hat T)^1\mapsto \R_{\ge 0}$ and suitable constants $c_1,c_2>0$.
	
	Combining~\eqref{eq:rc:df1} and ~\eqref{eq:rc:df2} we obtain:
	$$
	\D_{\PHB}( \tilde g, \tilde g) \ge \frac{c_1}{|E(\hat T)|}\sum_{e\in E(\hat T)}\nu\left[\Var_\nu(g \mid\sigma,A(E(\hat T) \setminus{e}))\right].
	$$
	Note that $\Var_\nu(g \mid\sigma,A(E(\hat T) \setminus{e}))$ is the variance of $g$ due to state of the edge $e$, which is Bernoulli random variable. 
	A well known relation between entropy and variance of Bernoulli random variable
	(see, e.g., Theorem A.1 and Corollary A.4 in \cite{DSC}) shows that, for all $g\geq 0$,
	\begin{equation*}
	\Ent_\nu(g \mid\sigma,A(E(\hat T) \setminus{e}))\leq c_3\Var_\nu(g \mid\sigma,A(E(\hat T) \setminus{e})),
	\end{equation*}
	where $c_3$ is a constant depending only on $q$, $p$ and $d$. Hence,
	\begin{equation}
	\label{eq:rc:df3}
	\D_{\PHB}( \tilde g, \tilde g) \ge \frac{c_1 c_3^{-1}}{|E(\hat T)|}\sum_{e\in E(\hat T)}\nu\left[\Ent_\nu(g \mid\sigma,A(E(\hat T) \setminus{e}))\right].
	\end{equation}
	
	By~\cref{thm:EM-EOF} and the fact that EM holds at all temperatures 
	under a monochromatic boundary condition, 
	the even-odd factorization of entropy in~\eqref{eq:ent:spin:marg} holds for every $p \in (0,1)$ and integer $q \ge 2$.	
	When the spin marginal of $\nu$ satisfies the even-odd factorization,
	the following factorization of entropy was established in (5.9)~\cite{BCPSVnew}
	\begin{align*}
	\Ent_\nu(g)\leq c_4
	\sum_{v\in V(\hat T)}\nu\left[\Ent_\nu(g\mid\sigma_{V\setminus\{v\}},A)\right]
	+c_4\sum_{e\in E(\hat T)}\nu\left[\Ent_\nu(g\mid\sigma,A(E(\hat T) \setminus{e}))\right],
	\end{align*}
	which implies that when $g$ is a function that depends only on the edges
	$$
	\Ent_{\pi_{\hat T}} (\tilde g) = \Ent_\nu(g) \leq c_4
	\sum_{e\in E(\hat T)}\nu\left[\Ent_\nu(g\mid\sigma,A(E(\hat T) \setminus{e}))\right].
	$$
	Plugging this bound into~\eqref{eq:rc:df3}, we obtain
	$$
	\D_{\PHB}( \tilde g, \tilde g) \ge \frac{c_1 c_3^{-1} c_4^{-1}}{|E(\hat T)|} 	\Ent_{\pi_{\hat T}} (\tilde g). 
	$$
	This is a log-Sobolev inequality for $\PHB$ with constant $O({|E(\hat T)|}^{-1}) = O(n^{-1})$, which implies that the mixing time of $\PHB$ is $O(n \log n).$
	The result follows by noting that the heat-bath edge dynamics 
	on $\hat T$ (conditioned on the edges in $W$ being present) corresponds to a lazy version the heat-bath dynamics on $T\cup\partial T$ with a wired boundary condition. The laziness of the chain corresponds to when edges of $W$ are selected,
	and thus the mixing time of the heat-bath edge dynamics on $(T\cup\partial T,E(T\cup\partial T))$ with wired boundary condition is $O(n \log n)$ as claimed.
\end{proof}

\section{A lower bound for the SW dynamics}
\label{sec:lower-bound}

In this section we establish an asymptotically tight lower bound for the mixing time of SW dynamics on the $d$-ary tree that holds for any inverse temperature $\beta$ and any boundary condition. In particular, we prove \Cref{thm:lb:intro} from the introduction.


The main idea of the proof is to adapt the lower bound framework of Hayes and Sinclair~\cite{HS} for the Glauber dynamics to the SW setting on the $d$-ary tree.  
Similar ideas were used recently to establish a tight lower bound for the mixing time of the SW dynamics on the grid~\cite{BCPSVstoc} in the high temperature region.

\bigskip\noindent\textbf{SW coupling.} \ We consider the following standard coupling of the steps of the SW dynamics. Consider two copies of the SW dynamics on $T = (V(T),E(T))$, the complete $d$-ary tree with $n$ vertices.
Recall that with a slight abuse of notation we also use $T$ for the vertex set of the tree and $E$ for the edge set.  
Let ${X_t}$ and ${Y_t}$ be the configurations of these copies at time $t \ge 0$.
We can couple the steps of the SW dynamics as follows:
\begin{enumerate}
	\item Draw 
	$|E|$ independent, uniform random numbers from $[0,1]$, one for each edge. Let $r_e(t) \in [0,1]$ denote the random number corresponding to the edge $e \in E$.
	\item Draw $|V|$ independent, uniform random numbers from $\{1,...,q\}$, one for each vertex. Let $s_v(t) \in \{1,...,q\}$ denote the random number for $v \in V$.
	\item Let $A_X = \{e \in M(X_t): r_e(t) \le p\}$ and $A_Y = \{e \in M(Y_t): r_e(t) \le p\}$. (Recall that $M(X_t)$ and $M(Y_t)$ denote the set of monochromatic edges in $X_t$ and $Y_t$, respectively.)
	\item For each connected component $\omega$ of $(V,A_X)$ or $(V,A_Y)$,
	we let $s_{\omega} = s_v(t)$, where $v$ is the highest vertex (i.e., closest to the root) in $\omega$.
	Then, every vertex of $\omega$ is assigned the spin~$s_{\omega} $.
\end{enumerate}

This coupling has the property that after assigning the edges, two identical connected components in $A_X$ and $A_Y$ will always be assigned the same spin.

\bigskip\noindent\textbf{A starting condition.} \
We let $B_1,B_2,\dots B_N \subset T$ be the collection of subtrees of $T$ rooted at level $L_k$ where $k = \lceil \frac 12 \log_d n \rceil$.
That is, $B_i = T_v$ for some $v \in L_k$ and hence $N = \Theta(\sqrt{n})$. 
(Recall that $L_k$ is the set of vertices of $T$ that are of distance exactly $k$ from $\partial T$.)

Let $B = \bigcup_{i=1}^N B_i$ and let $\partial B$ be the set of parents of the roots of the $B_i$'s. Let $e_i$ be an edge with both endpoints in $B_i$ that is at distance $1$ from $\partial T$. For definiteness, we take $e_i$ to be the ``left-most'' edge; note that 
one of the endpoints of $e_i$ corresponds to a leaf of $T$ (and $B_i$) and $e_i$ does not intersect $\partial T$.
Let $\mathcal A_i$ be the set of configurations on $B_i$
in which the spins at the endpoints of $e_i$ are the same.	

Let $\{X_t\}$ and $\{Y_t\}$ be two instances of the SW dynamics, coupled with the SW coupling from above. 
We let $\{X_t\}$ will be an instance of the SW dynamics on the tree $T$ with a suitably chosen starting condition. The chain $\{Y_t\}$ will have the same staring state as $\{X_t\}$ with moves 	restricted to $B$. 
For the initial state $X_0$ we set
the spins of all the vertices in $U = T \setminus B$ to $1$.
Then, 
the configuration 
in $B_i$ is sampled (independently) proportional to $\mu_{B_i}^1$ on $\mathcal A_i$; here $\mu_{B_i}^1$ denotes 
the Potts measure on $B_i$ with the spin $1$ as the fixed boundary condition on the parent of the root of $B_i$. (The boundary condition incident to the leaves of each $B_i$ is that from $\partial T$.)
$\{X_t\}$ is an instance of the standard SW dynamics on $(T,E)$.

As mentioned, $\{Y_t\}$ only updates the spins of the vertices in $B$. 
That is, after adding all the monochromatic edges independently with probability $p=1-\exp(-\beta)$, only the connected components fully contained in $B$ update its spin.
We set $Y_0 = X_0$ and couple the evolution of $Y_t$ and $X_t$ using the SW coupling defined above.
Note that $\{Y_t\}$ is a Markov chain on the configurations on $B$ with stationary distribution $\mu_B^1 = \otimes_{i=1}^N \mu_{B_i}^1$.
We also observe that a step of $\{Y_t\}$ is equivalent to performing one step of the SW dynamics in each $B_i$ independently.

Note that $X_0=Y_0$, and any disagreements between $X_t$ and $Y_t$ at later times~$t$
can arise only from the fact that $Y_t$ does not update the configuration outside~$B$: i.e., 
disagreements must propagate into each $B_i$ from its root.  
We provide the following bound on the speed of propagation of these disagreements
under the SW coupling.

\begin{lemma}
	\label{thm:dp}
	There exist sufficiently small constants $\xi > 0$ and $\alpha > 0$ such that
	the following is true.
	Let $\mathcal{\hat C} = \bigcup_{i=1}^{\hat R} e_i$ where $\hat R = n^\xi \le N$. Then,
	for $\tau = \alpha \ln n$ we have
	$$
	\Pr\left[X_\tau(\mathcal{\hat C}) = Y_\tau(\mathcal{\hat C})\right] = 1-o(1).
	$$
\end{lemma}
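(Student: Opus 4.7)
The plan is to adapt the Hayes-Sinclair framework \cite{HS} to the SW setting along the lines of \cite{BCPSVstoc}, by controlling how fast information from outside $B$ can propagate inward under the SW coupling. For a fixed target vertex $v$ (an endpoint of some $e_i$), I would define a backward influence cone $\{H_t\}_{t=0}^{\tau}$ with $H_\tau = \{v\}$ and, for $t < \tau$, $H_{t-1}$ equal to the union over $u \in H_t$ of the SW components of $u$ in $A_{X,t-1}$ and in $A_{Y,t-1}$ together with their external boundaries. A backward induction then shows that whenever $H_0 \subseteq B_i$, we must have $X_\tau(v) = Y_\tau(v)$: the initial configurations agree on $B_i$, the coupling uses identical random numbers, and whenever a cone-relevant SW component is fully contained in $B_i$ the $Y$-chain's modified update rule (which freezes only components crossing $\partial B$) acts identically to the $X$-chain's update.

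The second step is to bound the probability that the cone escapes $B_i$. Since $v$ lies at depth at least $k-2$ inside $B_i$, the cone escapes only if its upward reach $U_t$---the maximum distance from $v$ toward the root of $B_i$ attained by any vertex of $H_t$---exceeds $k-2$. Writing $U_{t-1} \le U_t + R_t$, the crucial observation is that, because $T$ is a tree, the upward extent of any SW component from a vertex $w$ is governed by the single ancestral path from $w$: the component reaches the ancestor of $w$ at distance $r$ only if all $r$ edges on that path lie in $A_{X,t-1} \cup A_{Y,t-1}$. Under the coupling each such edge is retained with probability at most $p$ independently across edges, regardless of the current configurations, yielding the uniform conditional tail bound
\[
\Pr\bigl[R_t \ge r \mid \mathcal F_{t-1}\bigr] \le p^r
\]
for every $\beta$, every $t$, and every history $\mathcal F_{t-1}$.

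Given this uniform tail bound, a standard exponential-moment argument---iterated conditionally to handle the dependence of the $R_t$'s across steps---gives $\mathbb E[e^{\lambda U_0}] \le C(\lambda,p)^\tau$ for small $\lambda > 0$, hence $\Pr[U_0 \ge k-2] \le e^{-\lambda(k-2)} C(\lambda,p)^\tau = n^{-c(\beta,d) + O(\alpha)}$, which is $n^{-c}$ for some $c > 0$ once $\alpha$ is chosen sufficiently small. A union bound over the $2\hat R = 2n^\xi$ endpoints of $\mathcal{\hat C}$ then yields $\Pr[X_\tau(\mathcal{\hat C}) \ne Y_\tau(\mathcal{\hat C})] = o(1)$ whenever $\xi < c$. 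The main technical obstacle I anticipate is making the backward-induction argument fully rigorous: namely, verifying that including the external boundaries of the SW components in $H_{t-1}$ carries enough information to reconstruct those components from the cone alone, and that on $\{H_t \subseteq B_i\}$ the two chains genuinely couple on the cone despite the asymmetry between $X$ and $Y$ on components crossing $\partial B$.
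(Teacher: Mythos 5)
Your overall strategy is a backward-in-time ("influence cone") version of what the paper does forward in time, and the deterministic half of your plan is sound: because you include the external boundaries of the components in $H_{t-1}$, agreement of $X_{t-1}$ and $Y_{t-1}$ on $H_{t-1}$ forces the component of each $u\in H_t$ to be identical in the two chains, and since the cone is nested ($H_t\subseteq H_{t-1}$) the event $H_0\subseteq B_i$ guarantees all these components lie in $B$, so both chains refresh them with the same spin labels; your anticipated obstacle is therefore not a real one. The quantitative core (each level of upward progress requires an edge with $r_e(t)\le p$ along the unique ancestral path, geometric-type tails summed over $\tau=\alpha\ln n$ steps, then a union bound over $n^{\xi}$ edges) is exactly the paper's computation, which tracks the deepest disagreement on the root-to-$e_i$ path, dominates its per-step advance by a Geometric$(1-p)$ variable, and applies a tail bound for sums of independent geometrics.

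The genuine gap is in the probabilistic iteration as you state it. The cone $H_t$ is built from $A_{X,s},A_{Y,s}$ for $s\ge t$, and $A_{X,t}$ depends on $X_t$, which was produced using the step-$(t-1)$ randomness (both the $r_e(t-1)$'s and the $s_v(t-1)$'s). Hence the location of the top vertex of $H_t$ is \emph{not} independent of the step-$(t-1)$ edge variables, and conditioning on the realized cone can bias them; the asserted uniform bound $\Pr[R_t\ge r\mid \mathcal F_{t-1}]\le p^r$, and with it the "iterated conditional" exponential-moment argument, does not follow as written because the increments $R_t$ are not adapted to any filtration under which the relevant edge variables are fresh. Two standard repairs: (i) run the argument forward, as the paper does, tracking the deepest disagreement vertex $v_i(t)$ on the path from the root of $B_i$ to $e_i$ — this front is measurable with respect to the randomness of steps $0,\dots,t-1$, so the per-step advance is genuinely dominated by a geometric conditional on the history; or (ii) keep the backward cone but replace the conditional iteration by a union bound over compositions of the required upward displacement into per-step runs along the (unique) ancestral path, using only the events $\{r_e(t)\le p\}$, which are independent across distinct (edge, time) pairs; this yields the same negative-binomial tail. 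A minor further correction: since $H_{t-1}$ includes external boundaries, the cone climbs at least one level per step deterministically, so the recursion should read $U_{t-1}\le U_t+1+R_t$; this only adds $\tau=\alpha\ln n$ to the reach and is absorbed by choosing $\alpha$ small relative to $\frac{1}{2}\log_d n$.
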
 

\begin{proof}
	We bound the probability that $X_\tau(e_i) \neq Y_\tau(e_i)$ and then apply a union bound over the $e_i$'s.
	Let
	$v_i(t)$ be the vertex the configurations of $X_t(B_i)$ and $Y_t(B_i)$ disagree 
	that is the closest to the edge $e_i$ in the unique path from the root of $B_i$ to $e_i$.
	After one step of the SW coupling, 
	the increase in the depth of $v_{t+1}(i)$ relative to the depth of $v_t(i)$ is stochastically dominated
	by a geometric random variable, denoted $Z_t(i)$, with success probability $1-p$.
	This is because in order for disagreements to propagate from $v_t(i)$ towards $e_i$ a distance of $D$, we require a path of $D$ open edges starting $v_i(t)$; every edge in such path is either absent with probability $1$ (if its endpoints have different spins), or it is otherwise open with probability $p$.	
	Hence, the total distance disagreements can propagate from the root of $B_i$
	to $e_i$ in $\tau$ steps is dominated by $Z = \sum_{t=0}^\tau Z_t(i)$, where the $Z_t(i)$'s are independent random variables and $Z_t(i) \sim \mathrm{Geom(1-p)}$. Since $E[Z] = \tau/(1-p)$, applying a tail bound for the sum of geometric random variables (see, e.g., Theorem 2.1 in \cite{Janson}), we have for any $\delta \ge 1$
	$$
	\Pr[X_\tau(e_i) \neq Y_\tau(e_i)] \le \Pr\left[Z \ge \frac{(1+\delta) \tau}{1-p}\right] \le \exp\left(-\frac{\delta \tau}{4}\right) = \frac{1}{n^{\frac{\delta\alpha}{4}}}.
	$$
	Thus,
	the disagreements will not reach $e_i$ with probability at least $1-1/n^{\frac{\delta\alpha}{4}}$ provided $\frac{(1+\delta) \tau}{1-p} \le \frac 12 \log_d n$, which is true for sufficiently small $\alpha$.
	The result then follows from a union bound over the $\hat R = n^\xi$ 
	blocks for a suitably small $\xi > 0$.
\end{proof}

Another key ingredient of the proof is the following discrete time version of the completely monotone decreasing (CMD) property of reversible Markov chains from~\cite{HS}. This particular variant of the CMD property was already established in~\cite{BCPSVstoc}.

\begin{lemma}\label{lem:HS}
	Let $\{X_t\}$ denote a discrete time Markov chain with finite state space $\Omega$, reversible with respect to $\pi$ and with a positive semidefinite transition matrix. 
	Let $B\subset \Omega$ denote an event. 
	If $X_0$ is sampled proportional to $\pi$ on $B$, then 	$\Pr(X_t\in B)\geq \pi(B)$ for all $t \ge 0$, and for all $t \ge 1$ 
	\begin{equation*}\label{eq:bo1}
	\Pr(X_t\in B)\geq \pi(B) + (1-\pi(B))^{-t+1}(\Pr(X_1\in B)-\pi(B))^{t}.
	\end{equation*}
\end{lemma}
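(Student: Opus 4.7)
The plan is to prove this by spectral decomposition of $P$, with positive semidefiniteness playing a crucial role. Let $v_1 = \allone, v_2, v_3, \ldots$ be a $\pi$-orthonormal eigenbasis of $P$ with corresponding eigenvalues $1 = \lambda_1 \ge \lambda_2 \ge \cdots \ge 0$. Since $X_0$ is $\pi$ conditioned on $B$, reversibility $\pi(x)P^t(x,y)=\pi(y)P^t(y,x)$ yields
\[
\Pr(X_t \in B) = \frac{1}{\pi(B)}\sum_{y\in B}\pi(y)(P^t\one_B)(y) = \frac{1}{\pi(B)}\langle \one_B, P^t\one_B\rangle_\pi.
\]
Expanding $\one_B$ in the eigenbasis and setting $a_i := \langle \one_B, v_i\rangle_\pi^2 \ge 0$, this becomes
\[
\Pr(X_t \in B) = \frac{1}{\pi(B)}\sum_{i\ge 1}\lambda_i^t a_i.
\]

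Since $v_1 = \allone$ one has $a_1 = \pi(B)^2$, so the $i=1$ contribution is exactly $\pi(B)$ and
\[
h(t) := \Pr(X_t\in B) - \pi(B) = \frac{1}{\pi(B)}\sum_{i\ge 2}\lambda_i^t a_i \ge 0,
\]
where nonnegativity of every term — and hence the first assertion — uses $\lambda_i \ge 0$. For the main inequality, set $b_i := a_i/\pi(B)$ for $i\ge 2$ and $s := \sum_{i\ge 2}b_i$. Parseval gives $\sum_{i\ge 1}a_i = \|\one_B\|_\pi^2 = \pi(B)$, hence $s = 1 - \pi(B)$. Viewing $\rho(i) := b_i/s$ as a probability measure on $\{i \ge 2\}$, we have $h(t) = s\cdot \E_\rho[\lambda^t]$. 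Because $\lambda_i \in [0,1]$ and $x\mapsto x^t$ is convex on $[0,1]$ for $t\ge 1$, Jensen's inequality gives
\[
h(t) \ge s \cdot (\E_\rho[\lambda])^t = s^{1-t}h(1)^t = (1-\pi(B))^{-t+1}\bigl(\Pr(X_1\in B)-\pi(B)\bigr)^t,
\]
which is exactly the claimed bound.

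The main (and really only) subtlety is the Jensen step, which is also the sole place where positive semidefiniteness of $P$ is essential: if $P$ had negative eigenvalues the spectrum would lie in $[-1,1]$, where $x\mapsto x^t$ is not convex for non-integer $t$, and even the first inequality $h(t)\ge 0$ would fail because of oscillating contributions from the negative part of the spectrum. Under the positive-semidefinite assumption, however, the spectrum lies in $[0,1]$ and the rest is routine spectral bookkeeping; no finer analysis of the chain is needed.
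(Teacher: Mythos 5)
Your proof is correct, and it is essentially the standard argument: the paper does not prove this lemma itself but cites \cite{BCPSVstoc}, where the same route is taken — expand $\one_B$ in a $\pi$-orthonormal eigenbasis, use positive semidefiniteness to keep the spectrum in $[0,1]$, and apply Jensen's inequality to the measure induced by the squared coefficients to get $h(t)\ge s^{1-t}h(1)^t$ with $s=1-\pi(B)$. The only (harmless) implicit assumption is $\pi(B)<1$ so that this measure is well defined; when $\pi(B)=1$ the statement is degenerate anyway.
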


Finally, we also require the following decay property for the Potts measure on trees.

\begin{lemma}
	\label{lemma:decay}
	Let $(T,E)$ denote the rooted $d$-ary tree of height $h$.
	Let $\eta$ be an arbitrary boundary condition on $\partial T$,
	and for $i \in [q]$ let $\mu_T^{\eta,i}$ denote the Potts distribution
	on $T$ with boundary condition $\eta$ on $\partial T$ and with the parent of the root of $T$ fixed to spin $i$.
	Let $e$ be the ``left-most'' edge with both endpoints in $T$ at distance $1$ from $\partial T$, and let $\mu_{e}^{\eta,i}$ denote the marginal of $\mu_T^{\eta,i}$
	on the edge $e$.
	Then, for any fixed $\beta > 0$, $q \ge 2$ and $d \ge 3$, there exists a constant $c > 0$ such that for any $i,j \in [q]$
	\begin{equation}\label{eq:decay_prop}
	{\|\mu_{e}^{\eta,i}- \mu_{e}^{\eta,j}\|}_\textsc{tv} \le \frac{1}{e^{c h}}.
	\end{equation}
\end{lemma}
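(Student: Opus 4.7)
My plan is to reduce the TV distance on the edge $e=(u,w)$ (with $u$ the leaf endpoint) to the TV distance between the marginals at the single vertex $w$, and then analyze this via a Markov chain along the path from $p(r)$ to $w$ with a uniform Dobrushin contraction at each step. For the first reduction, since $u$ is a leaf of $T$, its only neighbors in $T\cup\partial T$ are $w$ and a single vertex of $\partial T$. By the spatial Markov property, the conditional law of $\sigma_u$ given $\sigma_w$ and $\eta$ is independent of the boundary value $\sigma_{p(r)}$; a short calculation using $\mu_e^{\eta,i}(a,b) = \Pr(\sigma_u = a \mid \sigma_w = b,\eta)\,\mu_w^{\eta,i}(b)$ then gives $\|\mu_e^{\eta,i} - \mu_e^{\eta,j}\|_{\textsc{tv}} = \|\mu_w^{\eta,i} - \mu_w^{\eta,j}\|_{\textsc{tv}}$, where $\mu_w^{\eta,i}$ denotes the marginal of $\mu_T^{\eta,i}$ at $w$.

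Next, let $v_0 = p(r),\, v_1 = r,\, v_2, \ldots, v_h = w$ be the unique length-$h$ path from $p(r)$ to $w$. Since removing $v_k$ disconnects $v_{k-1}$ from $v_{k+1}$, the Markov random field property implies that $(\sigma_{v_k})_{k=0}^h$ is a Markov chain (conditional on $\eta$). A direct computation from the Gibbs weights shows that its transition kernel has the form $P_k(b\mid a) \propto e^{-\beta\mathbbm{1}(a\neq b)}\,Z_k(b)$, where $Z_k(b)$ is the partition function of the subtree rooted at $v_{k+1}$ (away from $p(r)$) conditioned on $\sigma_{v_{k+1}} = b$ and with boundary $\eta$. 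The critical structural point is that $Z_k(b)$ does not depend on $a$; all of the $a$-dependence is carried by the factor $e^{-\beta \mathbbm{1}(a \neq b)}$.

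The main step is then a uniform Dobrushin contraction bound: since $e^{-\beta\mathbbm{1}(a\neq b)} \ge e^{-\beta}$, we have $P_k(b\mid a) \ge e^{-\beta}\,\tilde Z_k(b)$ with $\tilde Z_k(b) := Z_k(b)/\sum_{b'}Z_k(b')$, so $\sum_b \min(P_k(b\mid a),P_k(b\mid a')) \ge e^{-\beta}$ and hence the Dobrushin coefficient satisfies $\tau(P_k) \le 1 - e^{-\beta}$, uniformly in $\eta$ and in the subtree structure. Sub-multiplicativity of the Dobrushin coefficient (or equivalently, a stepwise optimal coupling of the two chains started at $\delta_i$ and $\delta_j$) then yields $\|\mu_w^{\eta,i} - \mu_w^{\eta,j}\|_{\textsc{tv}} \le (1-e^{-\beta})^h$, giving the lemma with $c = e^{-\beta}$. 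I do not foresee a serious obstacle here; the crux is the $a$-independence of $Z_k(b)$, which is precisely what makes the Dobrushin contraction insensitive to the ferromagnetic alignment that would otherwise prevent exponential decay at low temperatures (and is why the bound survives in the non-uniqueness regime, though the constant $c$ degrades as $\beta \to \infty$).
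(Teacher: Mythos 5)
Your proof is correct, but it follows a genuinely different (and more explicit) route than the paper's. The paper proceeds by induction on the height $h$: it optimally couples the spins at the root under the two boundary values (bounding the disagreement probability by a uniform constant $e^{-c_2}$), notes that agreement at the root allows the two conditional measures below to be coupled exactly, and on the disagreement event applies the induction hypothesis to the child subtree on the path to $e$, giving $e^{-c_2}\cdot e^{-c(h-1)}$. You instead first reduce the edge marginal to the vertex marginal at $w$ (only the inequality ${\|\mu_e^{\eta,i}-\mu_e^{\eta,j}\|}_\textsc{tv}\le{\|\mu_w^{\eta,i}-\mu_w^{\eta,j}\|}_\textsc{tv}$ is needed, and it is immediate by data processing because the kernel $\Pr(\sigma_u=\cdot\mid\sigma_w=b,\eta)$ is the same under both measures), then view the spins along the length-$h$ path $p(r)=v_0,\dots,v_h=w$ as an inhomogeneous Markov chain whose kernels $P_k(b\mid a)\propto e^{-\beta\mathbbm{1}(a\neq b)}Z_k(b)$ admit the uniform minorization $P_k(b\mid a)\ge e^{-\beta}\tilde Z_k(b)$, so each Dobrushin coefficient is at most $1-e^{-\beta}$ and submultiplicativity gives $(1-e^{-\beta})^h$. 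What this buys: an explicit constant $c=-\ln(1-e^{-\beta})$ (your $c=e^{-\beta}$ also suffices since $1-x\le e^{-x}$), uniformity in $\eta$ made transparent, and in effect a proof of the uniform per-level contraction that the paper only asserts through the unspecified constants $c_1,c_2$ (your minorization is exactly what justifies those base-case and root-marginal bounds). Two harmless slips: a leaf of $T$ has $d$ neighbors in $\partial T$, not one (you only need that $w$ separates $u$ from $p(r)$), and your displayed identity for $\mu_e^{\eta,i}$ in fact yields equality of the two total-variation distances, though the inequality is all that is used.
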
 

\begin{proof}
	We prove the lemma by induction. When $h = 1$, there exists a constant $c_1 > 0$ such that ${\|\mu_{e}^{\eta,i}- \mu_{e}^{\eta,j}\|}_\textsc{tv} \le e^{-c_1}$ for any $i,j \in [q]$. 
	Now suppose that \eqref{eq:decay_prop} holds for $h-1$ for some positive constant $c \le c_1$ which we will specify soon. 
	Denote the root of $T$ by $r$ and let $v$ be the child of $r$ that is in the unique path from $r$ to $e$. 
	Denote the subtree rooted at $v$ by $T_v$. 
	Consider the following coupling of $\mu_{e}^{\eta,i}$ and $\mu_{e}^{\eta,j}$: first we optimally couple the spins $X_r$ and $Y_r$ at root $r$ from the marginals $\mu_r^{\eta,i}$ and $\mu_r^{\eta,j}$ respectively, then we optimally couple the configurations $X_e$ and $Y_e$ on $e$ given $X_r$ and $Y_r$. 
	Note that, there exists some constant $c_2 > 0$ such that $\Pr(X_r \neq Y_r) = {\|\mu_r^{\eta,i}- \mu_r^{\eta,j}\|}_\textsc{tv} \le e^{-c_2}$. 
	Meanwhile, since $X_e$ (resp., $Y_e$) is independent of the parent of $r$ given $X_r$ (resp., $Y_r$), we get
	\[
	\Pr(X_e \neq Y_e \mid X_r \neq Y_r) \le \max_{i,j \in[q]} {\|\mu_{e}^{\tilde{\eta},i}- \mu_{e}^{\tilde{\eta},j}\|}_\textsc{tv} \le e^{-c (h-1)}
	\]
	where $\mu_{e}^{\tilde{\eta},i}$ denotes the marginal of $\mu_{T_v}^{\tilde{\eta},i}$ on $e$ with $\tilde{\eta}$ being the boundary condition of the subtree $T_v$ induced from $\eta$ and $i$ being the spin of $r$, and the last inequality follows from the induction hypothesis. 
	We will set $c = \min\{c_1,c_2\}$. 
	It then follows that
	\begin{align*}
	{\|\mu_{e}^{\eta,i}- \mu_{e}^{\eta,j}\|}_\textsc{tv} &\le \Pr(X_e \neq Y_e) \\
	&= \Pr(X_r \neq Y_r) \Pr(X_e \neq Y_e \mid X_r \neq Y_r) \le e^{-c_2} \cdot e^{-c(h-1)} \le e^{-ch}.
	\end{align*}
	This establishes \eqref{eq:decay_prop} and proves the lemma. 
\end{proof}

We now proceed with the proof of~\cref{thm:lb:intro}.

\begin{proof}[Proof of~\cref{thm:lb:intro}]
	We would like to show that $
	{\|X_\tau - \mu\|}_\textsc{tv} > 1/2$ at some time $\tau = \Theta(\log n)$.	
	This implies that the mixing time of the SW dynamics is $\Omega(\log n)$.
	(Note that with a slight abuse of notation we use $X_\tau$ for the distribution of the chain at time $\tau$.)
	
	Let $\mathcal C = \bigcup_{i=1}^R e_i$, where $R \le \hat R$ will be chosen later ($\hat R$ is the constant from~\cref{thm:dp}).
	Let $\hat\mu_{\mathcal C}$ and $\hat\mu_{\mathcal C}^1$ be the marginals of $\mu$ and $\mu^{1}_B$, respectively, on $\mathcal C$. 
	Then,
	\begin{align}
	{\|X_\tau - \mu\|}_\textsc{tv} 
	&\ge {\|X_\tau(\mathcal C) - \hat\mu_{\mathcal C}\|}_\textsc{tv} \notag\\
	&\ge {\|Y_\tau(\mathcal C) - \hat\mu_{\mathcal C}\|}_\textsc{tv} - {\|X_\tau(\mathcal C) - Y_\tau(\mathcal C)\|}_\textsc{tv} \notag\\
	&\ge {\|Y_\tau(\mathcal C) - \hat\mu_{\mathcal C}^1\|}_\textsc{tv} - {\|\hat\mu_{\mathcal C}^1- \hat\mu_{\mathcal C}\|}_\textsc{tv} -
	{\|X_\tau(\mathcal C) - Y_\tau(\mathcal C)\|}_\textsc{tv}. \label{eq:main-trig}
	\end{align}
	Our goal is to find a lower bound for the first term of~\eqref{eq:main-trig}
	and upper bounds for the other two.
	First, since we choose $R \le \hat R$, Theorem~\ref{thm:dp} implies that
	\begin{align*}
	{\|X_\tau(\mathcal C) - Y_\tau(\mathcal C)\|}_\textsc{tv} 
	\le \Pr(X_\tau(\mathcal C) \neq Y_\tau(\mathcal C)) 
	& = o(1).
	\end{align*}
	
	We proceed next to bound the term ${\|\hat\mu_{\mathcal C}^1- \hat\mu_{\mathcal C}\|}_\textsc{tv}$ in~\eqref{eq:main-trig}.
	The bound can be deduce straightforwardly from~\cref{lemma:decay}.
	Let $\Omega(A)$ be the set of all possible configurations on the set $A \subseteq T$.
	For a configuration $\psi$ on $U$, let $\hat\mu_{\mathcal C}^\psi$ denote the marginal of $\mu_B^\psi$ on $\mathcal C$.
	Here, we use $\mu_B^\psi$ to denote the Gibbs measure on $B$ with
	the configuration on $\partial B$ fixed by $\psi$ and the configuration on $\partial T$ fixed by the boundary condition on $\partial T$.
	
	Let $\hat\mu_{e_i}^1$, $\hat\mu_{e_i}^\psi$ be the marginals of $\hat\mu_{B_i}^1$, $\hat\mu_{B_i}^\psi$ on $e_i$, respectively. Then,
	\begin{align*}
	{\|\hat\mu_{\mathcal C}^1- \hat\mu_{\mathcal C}\|}_\textsc{tv} 
	&\le \sum_{\psi \in \Omega(U)} \mu(\psi) 	{\|\hat\mu_{\mathcal C}^1- \hat\mu_{\mathcal C}^\psi\|}_\textsc{tv}  
	\le  \sum_{\psi \in \Omega(U)} \sum_{i=1}^R \mu(\psi) {\|\hat\mu_{e_i}^1- \hat\mu_{e_i}^\psi\|}_\textsc{tv} 
	\le \frac{R}{e^{\frac c2 \log_d n}},
	\end{align*}
	where the second inequality follows from the fact that $\mu_B^1$ and $\mu_B^\psi$ are product measures, and the last one follows from~\cref{lemma:decay} and the fact that the height of each $B_i$ is $\frac 12 \log_d n$.
	Then, setting $R = \min\{\hat R, n^{\frac{c}{4 \ln d}},n^{\frac 14}\}$, we obtain
	that ${\|\hat\mu_{\mathcal C}^1- \hat\mu_{\mathcal C}\|}_\textsc{tv} = o(1)$.
	
	We turn our attention to the lower bound for the term ${\|Y_T(\mathcal C) - \hat\mu_{\mathcal C}^1\|}_\textsc{tv}$ in~\eqref{eq:main-trig}.
	For a configuration $\sigma$ on $B$, let $f(\sigma)$ denote the number of edges $e_i \in \mathcal C$ that are monochromatic in $\sigma$.
	For any $a \ge 0$
	we have 
	\begin{equation}
	\label{eq:lb:3}
	{\|Y_\tau(\mathcal C) - \hat\mu_{\mathcal C}^1\|}_\textsc{tv} \ge \Pr[f(Y_\tau) \ge a] - \Pr\nolimits_{\sigma \sim \mu^1_{B}}[f(\sigma) \ge a].
	\end{equation}
	We show that, for a suitable $\tau$ and any $i=1,\dots,R$,
	\begin{equation}
	\label{eq:lb:key}
	\Pr[Y_\tau(B_i) \in \mathcal A_i] \ge \mu^1_{B_i}(\mathcal A_i) + \frac{1}{R^{1/4}}.
	\end{equation}
	Assuming this is the case and setting $\mathcal W = \sum_{i=1}^R \mu^1_{B_i}(\mathcal A_i)$,
	by Hoeffding's inequality 
	$$
	\Pr\left[f(Y_\tau) \ge \mathcal W + R^{3/4} - \sqrt{R \log R}\right] \ge 1 - \frac{1}{R^2}
	$$
	and
	$$
	\Pr\nolimits_{\sigma \sim \mu^1_{B}}\left[f(\sigma) \ge \mathcal W + \sqrt{R \log R}\right] \le \frac{1}{R^2},
	$$
	which yields from~\eqref{eq:lb:3} that
	$
	{\|Y_\tau(\mathcal C) - \hat\mu_{\mathcal C}^1\|}_\textsc{tv} \ge 1 - {2}/{R^2}
	$
	by taking $a = \mathcal W + \sqrt{R \log R}$.
	
	To establish~\eqref{eq:lb:key}, note that by Lemma~\ref{lem:HS} 
	\begin{equation}
	\label{eq:lb:kb}
	\Pr(Y_\tau(B_i)  \in \mathcal A_i) \ge \mu_{B_i}^1(\mathcal A_i) + (1-\mu_{B_i}^1(\mathcal A_i))^{-\tau+1}(\Pr(Y_1(B_i)\in \mathcal A_i)-\mu_{B_i}^1(\mathcal A_i))^{\tau}.
	\end{equation}
	($\{Y_t\}$ is positive semidefinite since it is a product of SW dynamics in each $B_i$, and the SW dynamics is positive semidefinite~\cite{BCSV}.) 
	
	Let $\PSWI$ denote the transition matrix of the SW dynamics on $B_i$.
	Then
	\begin{align}
	\Pr(Y_1(B_i)\in \mathcal A_i) &= \sum_{\sigma \in \mathcal A_i} \frac{\mu_{B_i}^1(\sigma)}{\mu_{B_i}^1(\mathcal A_i)} \PSWI(\sigma,\mathcal A_i) = \sum_{\sigma \in \mathcal A_i} \frac{\mu_{B_i}^1(\sigma)}{\mu_{B_i}^1(\mathcal A_i)} \left(\theta(\sigma) + \frac{1-\theta(\sigma)}{q}\right) \notag\\
	&= \frac 1q + \frac{q-1}{q \mu_{B_i}^1(\mathcal A_i)} \sum_{\sigma \in \mathcal A_i} \mu_{B_i}^1(\sigma)\theta(\sigma)~\label{eq:lb:eq1},
	\end{align}
	where $\theta(\sigma)$ denotes the probability that, after the edge percolation phase of the SW step, the end points of the edge $e_i$ are connected in the edge configuration. In similar fashion, we obtain 
	\begin{align*}
	\mu_{B_i}^1(\mathcal A_i) &= \sum_{\sigma \in \Omega(B_i)} \mu_{B_i}^1(\sigma)  \PSWI(\sigma,\mathcal A_i) \\
	&= \sum_{\sigma \in \Omega(B_i)\setminus\mathcal A_i} \mu_{B_i}^1(\sigma) \PSWI(\sigma,\mathcal A_i) + \sum_{\sigma \in \mathcal A_i} \mu_{B_i}^1(\sigma) \PSWI(\sigma,\mathcal A_i) \notag\\
	&= \sum_{\sigma \in \Omega(B_i)\setminus\mathcal A_i} \frac{\mu_{B_i}^1(\sigma)}{q} +  \sum_{\sigma \in \mathcal A_i} {\mu_{B_i}^1(\sigma)}\left(\theta(\sigma) + \frac{1-\theta(\sigma)}{q}\right)\\
	&= \frac 1q + \frac{q-1}{q}\sum_{\sigma \in \mathcal A_i} \mu_{B_i}^1(\sigma) \theta(\sigma).
	\end{align*}
	Combining with~\eqref{eq:lb:eq1} we get
	\begin{align*}
	\Pr(Y_1(B_i)\in \mathcal A_i) - \mu_{B_i}^1(\mathcal A_i) 
	&= \frac{q-1}{q} \left(\frac{1}{\mu_{B_i}^1(\mathcal A_i)} - 1\right)\sum_{\sigma \in \mathcal A_i} \mu_{B_i}^1(\sigma) \theta(\sigma) \\
	&\ge  \frac{q-1}{q} \left(\frac{1}{\mu_{B_i}^1(\mathcal A_i)} - 1\right) p \cdot \mu_{B_i}^1(\mathcal A_i) 
	= \frac{q-1}{q} \left(1 - \mu_{B_i}^1(\mathcal A_i)\right) p,
	\end{align*}
	where in the last inequality we use the fact that $\theta(\sigma) \ge p$ when $\sigma \in \mathcal A_i$; recall that $ p = 1 - e^{-\beta}$.
	
	Plugging this bound into~\eqref{eq:lb:kb}, 
	\begin{align*}
	\Pr(Y_\tau(B_i)  \in \mathcal A_i) &\ge \mu_{B_i}^1(\mathcal A_i) + (1-\mu_{B_i}^1(\mathcal A_i))^{-\tau+1}\left(\frac{q-1}{q} \left(1 - \mu_{B_i}^1(\mathcal A_i)\right) p\right)^{\tau} \\
	&= \mu_{B_i}^1(\mathcal A_i) + (1-\mu_{B_i}^1(\mathcal A_i))\left(\frac{(q-1)p}{q} \right)^{\tau}
	\ge \mu_{B_i}^1(\mathcal A_i) + \frac{1}{R^{1/4}},
	\end{align*}
	where the last inequality holds for $\tau = \alpha \log n$ for small enough $\alpha > 0$
	since $\mu_{B_i}^1(\mathcal A_i) = \Omega(1)$.
\end{proof}

\section{Slow mixing for random-cluster dynamics}
\label{sec:rc:slow}

In this section, we prove Theorem~\ref{thm:rc:lb} from the introduction.
We show first how to embed a graph on the boundary of a tree using a random-cluster boundary condition.

\medskip\noindent\textbf{The embedding.} \ 
For a graph $G=(V_G,E_G)$, let $\G = (\hat V_G,\hat E_G)$ be the graph that results from replacing every edge $\{u,v\}$ of $G$ by
a path of length $2$. That is, $\{u,v\} \in E_G$ is replaced by $\{u,r_{uv}\}$ and $\{r_{uv},v\}$ where $r_{uv}$ is a new vertex.

Consider the tree $T_h$ of height $h$. For simplicity and without much loss of generality we assume that $T_h$ is a binary tree; in particular, our proofs extend immediately to the case when $T_h$ is a $d$-ary tree. For a given graph $G=(V_G,E_G)$, we show how to embed $\G$ in the boundary $T_h$ using a random-cluster boundary condition. 
Let $\ell > 0$ be a parameter for the embedding
and partition the $2^h$ leaves of $T_h$ into $2^{h-\ell}$ subsets each containing the $2^\ell$ leaves of a full binary subtree of $T_h$. Let $B_1,B_2,\dots,B_{2^{h-\ell}}$ denote these subtrees; see~\cref{fig:1}.

\begin{figure}
	
	\begin{center}
		\begin{tikzpicture}[scale=0.4]
		\draw (0,0) -- (8,0) -- (4,8) --cycle;
		\filldraw[fill=green] (0,0) -- (1,0) -- (0.5,1) --cycle;
		\filldraw[fill=green] (1.1,0) -- (2.1,0) -- (1.6,1) --cycle;
		\filldraw[fill=green] (2.2,0) -- (3.2,0) -- (2.7,1) --cycle;
		\filldraw[fill=green] (3.3,0) -- (4.3,0) -- (3.8,1) --cycle;
		\filldraw[fill=green] (7,0) -- (8,0) -- (7.5,1) --cycle;
		\put(60,4){\small{$\dots$}}
		\end{tikzpicture}
		\caption{The subtrees $B_1,B_2,\dots, B_{2^{h-\ell}}$ (in green) of $T_h$.}	\label{fig:1}
	\end{center}
\end{figure}
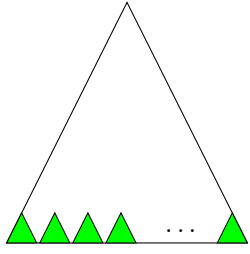	

We use exactly three edges from each subtree $B_i$ in the embedding.
Let $a_i,b_i$ be the two left-most leaves of $B_i$ and let $c_i$ be their parent.
We use the edges $\{a_i,c_i\}$ and $\{c_i,b_i\}$.
Specifically, every pair of edges $\{\{u,r_{uv}\},\{r_{uv},v\}\}$ of $\G$,
which corresponds to a single edge in $G$,
is mapped to a unique pair $\{a_i,c_i\}$, $\{c_i,b_i\}$; specifically, $u \rightarrow a_i$, $r_{uv} \rightarrow c_i$ and $v \rightarrow b_i$.
Observe that 
every middle vertex  $r_{uv}$ is mapped to exactly one $c_i$, whereas every other vertex 
is mapped to a number of vertices ($a_i$ or $b_i$) equal to its degree in $\G$.
We consider the boundary condition $\xi(\G)$ induced by this mapping.
That is, all the leaves of $T_h$ corresponding to the same vertex of $\G$ are wired together and all other leaves are left free.

\medskip\noindent\textbf{Random-cluster Markov chains.} \ For a graph $G = (V_G,E_G)$, let
$\pi = \pi_{G,p,q}$ denote the random-cluster distribution~\eqref{eq:rc} on $G$.
Let $\mathcal E = \{E_1,\cdots,E_k\}$ where $E_i \subseteq E_G$ and $\cup_{i=1}^k E_i = E_G$.
The heat-bath (random-cluster) block dynamics with respect to $\mathcal E$
is the Markov chain that 
from a random-cluster configuration $X_t$
picks $E_i$ u.a.r.\ from $\mathcal E$
and then updates the edge configuration on $E_i$ with a sample from the conditional distribution $\pi( \cdot \mid X_t(E_G\setminus E_i))$ to obtain $X_{t+1}$. The heat-bath \emph{edge} dynamics corresponds to the case where $\mathcal E = \{\{e\}: e \in E_G\}$. 

Consider the heat-bath block dynamics on $\G$ that in each step picks a ``middle'' vertex $r_{uv}$ u.a.r.\ and updates
the edges $\{\{u,r_{uv}\},\{r_{uv},v\}\}$ conditioned on the configuration off of these two edges. 
We denote this two-edge block chain by $\MC$ and let $\mathcal{M}$ be the heat-bath edge dynamics on the orginal graph $G$.
Our first observation is that the spectral gaps of $\MC$ and $\mathcal{M}$ are the same for a suitable choice of the edge parameters $p$ and $\p$.

\begin{lemma}
	\label{lemma:mg}
	$\GAP(\MC,\p,q) = \GAP(\mathcal{M},p,q)$, where $p = \frac{\p^2 }{\p^2  + 2\p(1-\p)+ (1-\p)^2 q}$.
\end{lemma}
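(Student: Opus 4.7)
The plan is to identify $\MC$ as a lifting of $\mathcal{M}$ via a natural projection $\phi$, verify that the lifting preserves both the stationary measure (with the stated reparametrization of $p$) and the heat-bath transition structure, and then deduce equality of the gaps through an Efron-Stein-type argument on fibers.

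I would first define $\phi: \RC(\G) \to \RC(G)$ by $\{u,v\} \in \phi(A) \iff \{u,r_{uv}\},\{r_{uv},v\} \in A$, and compute the pushforward per edge. When $\{u,v\} \in B$, only the on-on pair lies above $B$, contributing a factor $\p^2$ and merging $r_{uv}$ into the $u,v$-component; when $\{u,v\} \notin B$, summing over the three remaining joint pair states yields $(1-\p)^2 q + 2\p(1-\p)$, where the extra factor of $q$ comes from $r_{uv}$ forming its own component in the off-off case. Multiplying over $E_G$ gives $\phi_* \pi_{\G, \p, q}(B) \propto \p^{2|B|}[(1-\p)^2 q + 2\p(1-\p)]^{|E_G|-|B|} q^{c(B)}$, matching $\pi_{G, p, q}(B)$ precisely for $p = \p^2 / [\p^2 + 2\p(1-\p) + (1-\p)^2 q]$.

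Next, I would verify that $\MC$ lumps to $\mathcal{M}$ under $\phi$: a single step of $\MC$ resamples the pair at $r_{uv}$, and the probability that the new pair is on-on, given the rest, equals $p$ when $u$ and $v$ are already connected in $\phi(A) \setminus \{u,v\}$ and $p/(p+(1-p)q)$ otherwise, matching the heat-bath conditional of $\mathcal{M}$. Crucially, the conditional distribution of the pair state given that it is \emph{not} on-on is the fixed distribution $\alpha \propto ((1-\p)^2 q, \p(1-\p), \p(1-\p))$ on $\{\text{off-off}, \text{on-off}, \text{off-on}\}$, independent of the rest. Under the orthogonal decomposition $L_2(\pi_{\G, \p, q}) = L^{\mathrm{inv}} \oplus L^\perp$ with $L^{\mathrm{inv}} = \{f \circ \phi\}$, the operator $\MC$ is block-diagonal and $\MC|_{L^{\mathrm{inv}}}$ is unitarily equivalent to $\mathcal{M}$, so $\GAP(\MC) \le \GAP(\mathcal{M})$ by using test functions $f \circ \phi$.

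For the reverse inequality, I would establish the Efron-Stein-type bound $\sum_{e \in E_G} \E_{\pi_\G}[\Var(\tilde g \mid A_{-e})] \ge \Var_{\pi_\G}(\tilde g)$ for every $\tilde g \in L^\perp$, where $A_{-e}$ is the configuration off the pair corresponding to $r_e$. This uses the factorization $\pi_{\G, \p, q}(\cdot \mid \phi = B) = \prod_{e \notin B} \alpha$: Efron-Stein on each fiber gives $\Var(\tilde g \mid \phi = B) \le \sum_{e \notin B} \E[\Var(\tilde g \mid \phi = B, a_{-e})]$, and $\Var(\tilde g \mid A_{-e}) \ge \E[\Var(\tilde g \mid \phi, a_{-e}) \mid A_{-e}]$ by applying the law of total variance to $B(e)$. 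Dividing by $|E_G|$, this yields $\mathcal{E}_{\MC}(\tilde g, \tilde g) \ge (1/|E_G|)\Var(\tilde g)$ on $L^\perp$. Combined with the standard bound $\GAP(\mathcal{M}) \le 1/|E_G|$ for any single-edge heat-bath chain (via the test function $\one_{e \in B}$), this gives $\GAP(\MC) \ge \GAP(\mathcal{M})$, completing the equality. The main technical hurdle is making the Efron-Stein decomposition rigorous given that the index set $\{e : e \notin B\}$ of aux variables depends on $B$, requiring careful treatment of the boundary cases in the conditioning.
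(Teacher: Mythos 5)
Your proposal is correct, and it in fact does more than the paper's own argument. The paper's proof consists of exactly your first two steps: it computes the conditional probability that a resampled pair is doubly open (getting $p$ when $u\leftrightarrow v$ off the pair and $p/(q(1-p)+p)$ otherwise), observes that the projection $\phi$ pushes $\pi_{\G,\p,q}$ forward to $\pi_{G,p,q}$, and then notes that $\mathcal{E}_{\MC}(\hat f,\hat f)=\mathcal{E}_{\mathcal M}(f,f)$ and $\Var_{\pi_{\G}}(\hat f)=\Var_{\pi_G}(f)$ for lifted functions $\hat f=f\circ\phi$, from which "the result follows." As written, that variational argument only gives $\GAP(\MC,\p,q)\le\GAP(\mathcal M,p,q)$ (which is the direction actually invoked in the proof of Theorem~\ref{thm:lb:general}); the equality asserted in the lemma also needs control of functions orthogonal to the lifted ones, which the paper does not spell out. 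Your third step supplies exactly this: since $\MC$ is self-adjoint and, by lumpability, preserves $L^{\mathrm{inv}}$, it preserves $L^\perp$ as well, and your fiber argument — conditioned on $\phi=B$ the non-doubly-open pairs are i.i.d.\ with the fixed law $\alpha\propto(\p(1-\p),\p(1-\p),(1-\p)^2q)$, so tensorization of variance applies coordinate-wise, and the law of total variance transfers this to conditioning on $A_{-e}$ (the two conditionings $(A_{-e},\1(e\in\phi))$ and $(\phi,a_{-e})$ generate the same information, with zero conditional variance when $e\in\phi$) — yields $\mathcal{E}_{\MC}(\tilde g,\tilde g)\ge\Var(\tilde g)/|E_G|$ on $L^\perp$, using that $\Ex[\tilde g\mid\phi]=0$ there. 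Combined with the test-function bound $\GAP(\mathcal M,p,q)\le 1/|E_G|$ this closes the reverse inequality. Two small points worth recording if you write this up: both $\MC$ and $\mathcal M$ are heat-bath chains and hence positive semidefinite, so the absolute spectral gap equals $1-\lambda_2$ and the variational characterization~\eqref{eq:sw:gap} applies; and the "index set depends on $B$" issue you flag is harmless precisely because the summand $\Var(\tilde g\mid\phi,a_{-e})$ vanishes on the event $e\in\phi$, so the sum over $e\notin\phi$ can be extended to all of $E_G$ before taking expectations.
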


\begin{proof}
	Consider the chain $\MC$ for the random-cluster model on $\G$ with parameters $\p$ and $q$.
	Suppose vertex $r_{uv}$ is selected. If the vertices $u$ and $v$ are connected in the configuration off of $\{\{u,r_{uv}\},\{r_{uv},v\}\}$, then the probability that both edges $\{u,r_{uv}\}$ and $\{r_{uv},v\}$ are both present after the update is 
	$$
	\frac{\p^2 q}{\p^2 q + 2\p(1-\p)q + (1-\p)^2 q^2} = \frac{\p^2 }{\p^2  + 2\p(1-\p)+ (1-\p)^2 q} = p.
	$$
	Otherwise, if $u$ and $v$ are not connected then the probability that the edges $\{u,r_{uv}\}$, $\{r_{uv},v\}$  are both present is
	$$
	\left(\frac{\p}{q(1-\p)+\p}\right)^2 = \frac{p}{q(1-p)+p}. 
	$$
	These are exactly the transitions of the single-edge heat-bath dynamics $\mathcal{M}$ on $G$.
	
	Let $\hat\Omega$ and $\Omega$ be the set of random-cluster configurations of $\G$ and $G$ respectively. 
	For $A \in \Omega$, let $\hat\Omega(A)$ be the set of configurations that project to $A$; that is, for $B \in \hat\Omega(A)$, $\{u,r_{uv}\}$,$\{r_{uv},v\} \in B$ iff $\{u,v\} \in A$.
	It can be checked that $\pi(A) = \sum_{B \in \hat\Omega(A)} \hat\pi(B)$, where $\hat\pi = \pi_{\hat G,\hat p,q}$ is the random-cluster measure on $\G$ with parameters $\p$ and $q$. From this it follows that $\df{\mathcal M}{f} = \df{\MC}{\hat f}$ and $\Var_\pi(f) = \Var_{\hat\pi}(\hat f)$, for any $f \in \R^{|\Omega|}$ with
	$\hat f \in \R^{|\hat \Omega|}$ defined as $\hat f (B ) = f(A)$ for $B \in \hat\Omega(A)$.
	The result follows.
\end{proof}

It will be convenient to also consider the following heat-bath block dynamics on the tree $T_h = (V(T_h),E(T_h))$, which we call the modified heat-bath dynamics (MHB). 
Let $L_h$ be the set of leaves of $T_h$ and let $I_h = V(T_h)\setminus L_h$.
Let $W_h$ be the set of vertices of $T_h$ used in the embedding; i.e., the set of $a_i,b_i,c_i$ vertices defined above.
Given a random-cluster configuration $X_{t}$, one step of the MHB chain is given by:
\begin{enumerate}
	\item Pick $v \in I_h$ uniformly at random;
	\item If $v \in W_h$, i.e., $v = c_i$ for some $i$, then perform a heat-bath update on the set of edges $\{\{a_i,c_i\},\{c_i,b_i\}\}$;
	\item Otherwise, perform a heat-bath update on $E(T_h) \setminus E(W_h)$.
\end{enumerate}

\subsection{Main result: transferring gap bounds from $G$ to $T_h$}

We relate next the spectral gap of the heat-bath edge dynamics $\mathcal{M}(T_h)$ on $T_h$ with boundary condition $\xi(\G)$ to the spectral gap of $\mathcal M$, the heat-bath edge dynamics on $G$.

\begin{theorem}\label{thm:lb:general} 
	Let $G=(V_G,E_G)$ be a graph and suppose there exist
	$q > 2$ and 
	$p \in (0,1)$ 
	such that 
	$
	\GAP(\mathcal M,p,q) \le \exp(-\Omega(|V_G|)).
	$
	Suppose that $p$ is such that $\frac 12 \ge \p \ge \frac{1}{e^{\gamma |V_G|}}$ for a sufficiently large constant $\gamma > 0$, and that for any small constant $\delta$
	there exists $\ell > 0$ such that 
	$\delta |V_G| > |E_G| \p^{\ell-1}$
	and  
	$|E_G| < 2^{h-\ell}$. Then,
	$$
	\GAP(\mathcal{M}(T_h),\p,q) \le  {e}^{-\Omega(|V_G|)}.
	$$
\end{theorem}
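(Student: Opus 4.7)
The plan is to exhibit a test function $F$ on random-cluster configurations of $T_h$ with $\Var_{\pi_{T_h}}(F)=\Theta(1)$ and Dirichlet form $\mathcal E_{\mathcal M(T_h)}(F,F)\le e^{-\Omega(|V_G|)}$; by~\eqref{eq:sw:gap} this gives the claimed bound on the gap. The function $F$ will be the indicator of a lifted bad conductance set for $\mathcal M$ on $G$.

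First, \cref{lemma:mg} translates the hypothesis into $\GAP(\MC,\hat p,q)\le e^{-\Omega(|V_G|)}$, and Cheeger's inequality produces a set $\hat S$ with $\hat\pi(\hat S)\le 1/2$ and conductance $\Phi_{\MC}(\hat S)\le e^{-\Omega(|V_G|)}$; set $\hat f=\1_{\hat S}$. I then lift $\hat S$ to $T_h$: for $X\in\Omega(T_h)$ define the projection $\rho(X)\in\{0,1\}^{E(\hat G)}$ by reading off the state of each embedded path $a_ic_ib_i$, and set $S^\star=\rho^{-1}(\hat S)$ and $F=\1_{S^\star}$.

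The heart of the proof is the following \emph{factorization claim}: under the hypotheses of the theorem, $\pi_{T_h}$ factorizes (up to a multiplicative $1+o(1)$) as $\pi_{T_h}(X_{\mathrm{emb}},X_{\mathrm{noe}})\approx \hat\pi(X_{\mathrm{emb}})\,\mu(X_{\mathrm{noe}})$ for some distribution $\mu$ on the non-embedded edges. This identity holds \emph{exactly} whenever every non-embedded cluster is confined to a single subtree $B_i$, since then the component count satisfies $c^\xi(X_{\mathrm{emb}}\cup X_{\mathrm{noe}})=\hat c^\xi(X_{\mathrm{emb}})+c_0(X_{\mathrm{noe}})$ with $c_0$ independent of $X_{\mathrm{emb}}$ (the key geometric point is that $a_i$ and $b_i$ are leaves, so any non-embedded path from $c_i$ out of $B_i$ must travel upward the full length $\ell-1$ to the root of $B_i$). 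A first-moment bound then gives $\Pr[\text{bad}]\le |E_G|\hat p^{\ell-1}<\delta|V_G|=o(1)$, establishing the factorization; as a side consequence $\pi_{T_h}(S^\star)=(1\pm o(1))\hat\pi(\hat S)=\Theta(1)$, so $\Var_{\pi_{T_h}}(F)=\Theta(1)$.

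Finally, since $F$ depends only on the embedded edges,
$\mathcal E_{\mathcal M(T_h)}(F,F)=\frac{1}{|E(T_h)|}\sum_{e\in E_{\mathrm{emb}}}\mathcal E_{P_e}(F,F),$
and a direct computation in which the $\mu(X_{\mathrm{noe}})$ factor cancels from every heat-bath ratio shows $\mathcal E_{P_e}(F,F)=(1+o(1))\,\mathcal E_{\hat P_{\hat e}}(\hat f,\hat f)$, the single-edge heat-bath Dirichlet term on $\hat G$ under $\hat\pi$. The condition $\tfrac12\ge\hat p\ge e^{-\gamma|V_G|}$ keeps all single-edge transition probabilities bounded away from $0$ and $1$ up to constants. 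A standard single-edge-vs-pair heat-bath comparison on $\hat G$ (pair blocks of size $2$) then yields $\sum_{\hat e}\mathcal E_{\hat P_{\hat e}}(\hat f,\hat f)\le O(|E(\hat G)|)\cdot \mathcal E_{\MC}(\hat f,\hat f)\le O(|E(\hat G)|)\cdot \Phi_{\MC}(\hat S)\le e^{-\Omega(|V_G|)}$ once the $\poly(n)$ prefactor from $|E(T_h)|^{-1}$ and $|E(\hat G)|$ is absorbed into the exponential. Dividing by $\Var_{\pi_{T_h}}(F)=\Theta(1)$ closes the argument.

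The main obstacle is the factorization claim. The tree contains many more edges than are embedded, and a priori these non-embedded edges could produce long-range connections whose contribution to the partition function depends on the embedded configuration (for instance, by bridging distinct vertices $c_i,c_j$ via a path going up through the internal tree and back down). The hypotheses $\delta|V_G|>|E_G|\hat p^{\ell-1}$ and $|E_G|<2^{h-\ell}$ are precisely calibrated to confine every non-embedded cluster to its own subtree $B_i$ via the union bound on such escape paths, while the lower bound $\hat p\ge e^{-\gamma|V_G|}$ prevents the elementary transition probabilities from collapsing and thus ensures the final Dirichlet comparison loses only a constant factor.
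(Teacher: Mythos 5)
Your opening steps (invoking \cref{lemma:mg} to get a bad-conductance set for $\MC$ on $\G$, and lifting it through the embedding) match the paper, but the ``factorization claim'' at the heart of your argument has a genuine gap, and it is not a repairable technicality within your framework. You justify the claim by asserting $\Pr[\text{bad}]\le |E_G|\hat p^{\ell-1}<\delta|V_G|=o(1)$, but the hypothesis of the theorem only guarantees that the \emph{expected number} of gadget-to-bulk escape connections is below $\delta|V_G|$, a quantity that grows linearly in $|V_G|$ rather than tending to $0$. In the theorem's flagship application ($G=K_a$ with $a=\Theta(\sqrt n)$, $\hat p=\Theta(n^{-1/4})$, $\ell=4$) one has $|E_G|\hat p^{\ell-1}=\Theta(n^{1/4})$, so typically \emph{polynomially many} non-embedded clusters escape their subtrees and create extra wirings among the $c_i$'s. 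Consequently the conditional law of the embedded edges given the bulk is a random-cluster measure on $\G$ distorted by up to $q^{\pm\Theta(n^{1/4})}$ (one factor of $q$ per extra wiring), not within $1+o(1)$ of $\hat\pi$; both $\pi_{T_h}(S^\star)=(1\pm o(1))\hat\pi(\hat S)$ and your $(1+o(1))$ term-by-term Dirichlet comparison therefore fail. A secondary problem: Cheeger only gives $\hat\pi(\hat S)\le 1/2$, which may itself be exponentially small, so $\Var_{\pi_{T_h}}(F)=\Theta(1)$ is unjustified even granting factorization; you must keep $\hat\pi(\hat S)$ in both numerator and denominator, i.e.\ argue via conductance rather than a constant-variance test function.

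The paper's proof is built precisely to cope with the fact that bad bulk connections are \emph{not} rare, only controllably many. It intersects the lifted set with $\mathcal R^{\xi(\G)}(M)$, the event that at most $M=\delta|V_G|$ partition classes are bridged through the bulk; \cref{claim:tail} gives an exponential tail at about twice the mean $|E_G|\hat p^{\ell-1}$ (not a no-escape statement), and \cref{claim:am-prob-facts:sketch} shows the lifted set $A_M$ has measure at least $q^{-M}(1-e^{-\Omega(M)})\pi_{\G}(S_\star)$ --- exponentially small, not $\Theta(1)$. The edge-measure of the cut is then bounded by comparing the relevant transitions of a modified block dynamics (MHB) to $\MC$ at the price of a further $q^{O(M)}$ factor (the standard boundary-perturbation bound for random-cluster measures), and both conductances $\Phi(A_M)$, $\Phi(A_M^c)$ come out as $e^{-\Omega(M)}+e^{O(M)}e^{-\Omega(|V_G|)}$, which is $e^{-\Omega(|V_G|)}$ once $\delta$ is chosen small; finally $\GAP(\mathcal M(T_h))\le\GAP(\mathrm{MHB})$. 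To fix your argument you would essentially have to adopt this structure: tolerate up to $\delta|V_G|$ bulk wirings, absorb the resulting $e^{O(\delta|V_G|)}$ losses into the exponential, and replace the constant-variance test-function computation with a two-sided conductance bound.
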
 

\begin{remark}
	In order to use this theorem, the graph $G$ should be one for which slow mixing of the heat-bath edge dynamics is known for some value of $p$. The additional condition that  $|E_G| < 2^{h-\ell}$ is so that the graph can be embedded in $T_h$.
	The condition that for any small constant $\delta > 0$,  we have $\delta |V_G| > |E_G| \hat{p}^{\ell-1}$ is for controlling the influence between the different $\{a_i,b_i,c_i\}$ used in the embedding. 
\end{remark}	
\begin{remark}	
	If $n$ is the number of vertices of $T_h$, it follows from \cref{thm:lb:general} that:
	\begin{enumerate}
		\item If we take $G=K_a$ with $a = O(\sqrt{n})$, $p= \Theta(n^{-1/2})$ in the slow mixing window~\cite{BSmf,GSV}, then $\p = \Theta(n^{-1/4})$. Hence, for $\ell = 4$, we get an upper bound for the spectral gap of $\mathcal{M}(T_h)$ of $\exp(-\Omega(\sqrt{n}))$. Note that this bounds extends to the SW dynamics using the comparison results in~\cite{Ullrich}; this establishes Theorem~\ref{thm:lb:intro} from the introduction.
		
		\item 	We could also take $G=(V_G,E_G)$
		to be a random regular graph of maximum (constant) degree $d$
		with $|V_G| = \Theta(\sqrt{n})$. Then $p = \Theta(d^{-1})$ in the known slow mixing regime~\cite{GSVY} and so
		$\p = \Theta(d^{-1/2})$. Hence,
		taking $\ell$ to be a large enough constant, we again get an upper bound of $\exp(-\Omega(\sqrt{n}))$ for the spectral gaps of $\mathcal{M}(T_h)$ and of the SW dynamics.
	\end{enumerate}
\end{remark}

To prove Theorem~\ref{thm:lb:general} we extend the machinery from~\cite{BGV}.

\begin{proof}[Proof of Theorem~\ref{thm:lb:general}] 
	
	By assumption for $q>2$ and $p$, $\GAP (\mathcal M,p,q)\leq \exp(-\Omega(|V_G|))$.
	Then, by Lemma~\ref{lemma:mg},
	for the dynamics $\MC$ on $\G$ with parameters $\p$ and $q$, 
	we have  $\GAP (\MC,\p,q)\leq \exp(-\Omega(|V_G|))$.
	Hence, there must exist  $S_\star \subset \Omega_{\G}$ (the set of random-cluster configurations on $\G$) with $\pi_{\G,\p,q}(S_\star)\leq \frac 12$ such that 
	\begin{align}\label{eq:s-star-conductance:sketch}
	\Phi(S_\star) = \frac {Q_{\MC}(S_\star, S_\star^c)}{\pi_{\G}(S_\star)} \leq {e}^{-\Omega(|V_G|)}\,. 
	\end{align}
	Here $Q_\MC$ is the edge measure of $\MC$
	and $\pi_{\G} = \pi_{\G,\p,q}$.
	We will construct from this set $S_\star$, a set $A_\star \subset \Omega_{T_h}$ (the set of random-cluster configurations on $T_h$), such that :
	\begin{align}
	\label{eq:lb:conductance}
	\Phi(A_\star) = \frac{\QMHB(A_\star, A_\star^c)} {\pi^{\xi(\G)}({A_\star})} \leq {e}^{-\Omega(|V_G|)}\,, \qquad \mbox{and}\qquad \Phi(A_\star^c) =\frac{\QMHB(A_\star, A_\star^c)} {\pi^{\xi(\G)}({A_\star^c})} \leq {e}^{-\Omega(|V_G|)}\,,
	\end{align}
	where $\QMHB$ denotes the edge measure of the MHB dynamics on $T_h$
	with boundary condition $\xi(\G)$
	and $\pi^{\xi(\G)} = \pi^{\xi(\G)}_{\T_h,\p,q}$. 
	This upper bounds on the conductances of $A_\star$ and $A_\star^c $
	imply that, under the boundary condition $\xi(\hat G)$, $\GAP(MHB,\hat p,q) \leq \exp(-\Omega(|V_G|))$ (see, e.g., Theorem 13.14 in~\cite{LPW}). The result then follows by noting that 
	$
	\GAP(MHB,\hat p,q) \ge \GAP(\mathcal M(T_h),\hat p,q);
	$
	for a proof of this standard fact see Lemma 7.5 in~\cite{BGV}.

	Le $A_i = \{a_i,b_i,c_i\}$. 
	Let $\{\xi_1,\dots,\xi_k\}$ be the partition of $\{A_1,A_2,\dots\}$ induced by $\xi(\G)$.
	That is, $A_i$ and $A_j$ are in the same set of partition if they are wired in $\xi(\G)$.
	For a configuration $\omega$ on $E(V(T_h)\setminus W_h)$,
	we say that $\xi_i \stackrel{\omega}\longleftrightarrow \xi_j$
	if there is an open path in $\omega$ between vertices $c_x$ and $c_y$ 
	belonging to different sets of the partition $\{\xi_1,\dots,\xi_k\}$.
	Let 
	$$
	\mathcal{S}^{\xi(\G)}(\omega) = \{\xi_i \in \xi(G):\, \xi_i \stackrel{\omega}{\longleftrightarrow} \xi_j\text{  for some } j \neq i,~j \in \{1,\dots,k\}\}.
	$$
	For $M \ge 0$, let
	\begin{equation*} 
	\label{eq:def:r}
	\mathcal{R}^{\xi(\G)}(M) = \{\omega \in \{0,1\}^{E(V(T_h)\setminus W_h)}: |\mathcal{S}^{\xi(\G)}(\omega)| \leq  M \}\,.
	\end{equation*}
	In words, $\mathcal{R}^{\xi(\G)}(M)$ is the set of configurations on 
	$E(V(T_h)\setminus W_h)$ that connect at most $M$ elements of the partition $\{\xi_1,\dots,\xi_k\}$ by paths between the $c_i$'s. Any one of these connections ``distorts'' the embedding, so we want to avoid having too many of them. On the other hand, 
	we need a tail bound on the number of such connections, which improves as we increase $M$.
	
	Observe that any configuration $\theta$ on $E(W_h)$ corresponds to a configuration on $E_{\G}$. Let	
	\begin{align}\label{eq:A-M:sketch}
	A_{M} = \{\omega \in \Omega_{T_h}: \omega(E(W_h)) \in S_\star \,,\, \omega(E(V(T_h)\setminus W_h)) \in \mathcal{R}^{\xi(\G)}(M)\}\,.
	\end{align}  
	
	In words, $A_M$ is the set of configurations on $T_h$, whose configuration on $W_h$ is in the set of bad conductance for $\G$ and that allows at most $M$ bad connections distorting the embedding through the bulk of the tree.
	The following is the crucial estimate on the probability mass of the set $\mathcal{R}^{\eta}(M)$.
	
	\begin{claim}
		\label{claim:tail}
		Let $q\geq 1$.
		Let $\eta$ be any boundary condition for $T_h$
		and let $\Lambda$ be any configuration on $E(W_h)$. 
		Let $r = \p^{\ell-1}$ and $m = |E_G|$. Then,
		for every $M \ge 2mr$,
		\begin{align*}
		\pi_{T_h,\p,q}^{\eta} \big(\mathcal{R}^{\eta}(M) \mid \Lambda \big)  \ge 1-  e^{-\Omega(M )}.
		\end{align*}
	\end{claim}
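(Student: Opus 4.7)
The plan is to reduce to independent bond percolation via FKG-domination and then control the tail of $|\mathcal{S}^\eta(\omega)|$ directly. Since $q \ge 1$, the conditional measure $\pi^\eta_{T_h,\hat p,q}(\cdot \mid \Lambda)$ on the edges $E(T_h) \setminus E(W_h)$ is stochastically dominated by the Bernoulli$(\hat p)$ product measure on the same edges, because for each edge the conditional probability given all others is at most $\hat p$ when $q \ge 1$. Since $\{|\mathcal{S}^\eta| > M\}$ is an increasing event in $\omega$, it suffices to establish the bound under Bernoulli$(\hat p)$.

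The first moment is then immediate from the tree's unique-path property. For $c_x, c_y$ in distinct blocks, the path between them has length $2j$ where $1+j$ is the level of their lowest common ancestor, with $j \ge \ell$, so $\Pr(c_x \leftrightarrow c_y) = \hat p^{2j}$. For fixed $c_x$, the number of $c_y$'s whose LCA with $c_x$ lies at level $1+j$ equals $2^{j-\ell}$, so summing the geometric series with ratio $2\hat p^2 \le 1/2$ gives $\sum_{c_y \neq c_x} \hat p^{d(c_x,c_y)} \le 2\hat p^{2\ell}$. Since there are $m = |E_G|$ many $c$-vertices and each bad $\xi$-class is witnessed by at least one open-connected pair (while each such pair flags at most two classes), one obtains $\Ex[|\mathcal{S}^\eta|] \le 2m\hat p^{2\ell}$. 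Using $\hat p^{2\ell} = r \cdot \hat p^{\ell+1} \le r/4$ for $\hat p \le 1/2$, this gives $\Ex[|\mathcal{S}^\eta|] \le mr/2 \le M/4$ by the assumption $M \ge 2mr$.

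Upgrading to the required exponential tail is the main obstacle, since Markov alone yields only a constant bound. Writing $|\mathcal{S}^\eta| \le 2 \sum_{\{x,y\}} Z_{xy}$ with $Z_{xy}$ the indicator that the unique tree path from $c_x$ to $c_y$ is fully open (a product of $d(c_x,c_y) \ge 2\ell$ independent Bernoulli$(\hat p)$'s), the plan is an exponential-moment estimate of $\Ex\bigl[e^{t|\mathcal{S}^\eta|}\bigr]$. The difficulty is that the $Z_{xy}$ are positively correlated by FKG, so independence cannot be invoked; however, the tree's unique-path property means that the union of the paths appearing in any product $\prod_i Z_{x_iy_i}$ is itself a subforest of $T_h$, so one can estimate the expectation by expanding, grouping terms by the skeleton subforest each selection of paths determines, and using that each distinct supporting edge contributes a factor of $\hat p$. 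Combined with the geometric decay $\hat p^{2\ell}$ and the slack $M \ge 2mr$, tuning $t$ appropriately then yields the required tail bound $e^{-\Omega(M)}$.
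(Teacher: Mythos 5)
Your first step (Holley/FKG domination of the conditional measure by independent Bernoulli($\p$) percolation, valid since $q\ge 1$, plus monotonicity of the bad event) matches the paper. The gap is in everything after the first-moment bound. Beyond the fact that the exponential-moment estimate is only a plan (the expansion over skeleton subforests is never carried out), the reduction $|\mathcal{S}^{\eta}(\omega)|\le 2\sum_{\{x,y\}}Z_{xy}$ is too lossy to ever give $e^{-\Omega(M)}$: a single percolation cluster joining $k\approx\sqrt{M}$ of the vertices $c_i$ (open paths of length $\ell-1$ from each $c_i$ to the root of its block $B_i$, plus a short backbone joining those $k$ block roots, which are mutually close in $T_h$) makes $\sum_{\{x,y\}}Z_{xy}\ge\binom{k}{2}\ge M/2$ while contributing only $k$ to $|\mathcal{S}^{\eta}|$. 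Such a configuration has probability roughly $\p^{\Theta(\ell k)}=e^{-\Theta(\sqrt{M})}$ in the regimes the theorem is applied to (e.g.\ $\p$ and $\ell$ constants, as for random regular graphs), so $\Pr\bigl[\sum_{\{x,y\}}Z_{xy}\ge M/2\bigr]$ is only $e^{-\Theta(\sqrt{M})}$, not $e^{-\Omega(M)}$; no choice of $t$ in a Chernoff/MGF argument for this sum can repair that, since the bound targets the wrong random variable.

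The paper avoids the correlated pairwise indicators entirely with a per-block bottleneck observation: for $c_i$ to be connected to any $c_j$ in another class through the bulk of the tree, the path of length $\ell-1$ from $c_i$ up to the root of its own subtree $B_i$ must be open. Hence $|\mathcal{S}^{\eta}(\omega)|$ is at most the number of indices $i$ for which this happens, and since these $m$ events are supported on pairwise disjoint edge sets, under the product measure they are independent Bernoulli($r$) with $r=\p^{\ell-1}$. Thus $|\mathcal{S}^{\eta}|$ is stochastically dominated by $\mathrm{Bin}(m,r)$, and a standard Chernoff (relative-entropy) bound together with $M\ge 2mr$ gives $\Pr[|\mathcal{S}^{\eta}|>M]\le e^{-\Omega(M)}$. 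If you want to salvage your route, you should replace the pairwise sum by these $m$ independent block-escape indicators; that is exactly the missing idea.
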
 
	
	From this, we can also deduce the following.
	
	\begin{claim}   
		\label{claim:am-prob-facts:sketch} Let $r = \p^{\ell-1}$ and $m = |E_G|$. If $M > mr$ we have:  
		\begin{enumerate}
			\item[(i)] $\pi^{\xi(\G)}(A_M) \ge q^{-M}({1-{e}^{-\Omega (M)}}) \pi_{\G}(S_\star)$\,;
			\item[(ii)] $\pi^{\xi(\G)} (A_M^c) \ge {e}^{-O(M)}$\,.
		\end{enumerate}
	\end{claim}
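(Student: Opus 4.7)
The plan is to establish both parts by a conditional-on-bulk analysis that compares the marginal of $\pi^{\xi(\G)}$ on $E(W_h)$ directly with $\pi_{\G}$. For every bulk configuration $\omega \in \{0,1\}^{E(V(T_h)\setminus W_h)}$, I would define the \emph{induced boundary} $\xi(\omega)$ as the partition of $V(\G)$ obtained from $\xi(\G)$ (under the natural identification sending each $c_i$ to its middle vertex $r_{uv}$ and each wired leaf to its copy of a $G$-vertex) by merging any two classes whose representatives are joined by an open bulk path in $\omega$. The first step is to verify the decomposition
\begin{equation*}
c^{\xi(\G)}(\Lambda \cup \omega) \;=\; c^{\xi(\omega)}(\Lambda') + C(\omega),
\end{equation*}
where $\Lambda'$ is the $\G$-configuration corresponding to $\Lambda$ under the bijection $E(W_h) \leftrightarrow E(\G)$, and $C(\omega)$ depends only on $\omega$ (counting the purely bulk components and the isolated vertices of $V(T_h)\setminus W_h$ not attached to $W_h$ by $\omega$). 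Combined with the factorization of the Gibbs weight, this yields the exact identity $\pi^{\xi(\G)}(\Lambda \mid \omega'=\omega) = \pi_{\G}^{\xi(\omega)}(\Lambda')$: conditional on the bulk, the marginal on $E(W_h)$ is a random-cluster distribution on $\G$ with modified boundary $\xi(\omega)$.

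The second ingredient is the standard random-cluster comparison between boundary conditions: if $\xi'$ is obtained from $\xi$ by $k$ pairwise merges of classes, the pointwise bound $c^{\xi'} \ge c^{\xi}-k$ together with the matching bound on partition functions gives $\pi_{\G}^{\xi'}(S) \ge q^{-k}\pi_{\G}^{\xi}(S)$ for every event $S$. When $\omega \in \mathcal R^{\xi(\G)}(M)$, at most $M$ classes of $\xi(\G)$ take part in inter-class bulk connections, so $\xi(\omega)$ differs from $\xi(\G)$ by at most $M$ merges; hence $\pi_{\G}^{\xi(\omega)}(S) \ge q^{-M}\pi_{\G}(S)$. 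Marginalizing over the bulk and applying Claim~\ref{claim:tail} to the unconditional distribution of $\omega'$,
\begin{equation*}
\pi^{\xi(\G)}_{W_h}(S) \;=\; \sum_{\omega} \pi^{\xi(\G)}(\omega'=\omega)\, \pi_{\G}^{\xi(\omega)}(S) \;\ge\; q^{-M}\bigl(1-e^{-\Omega(M)}\bigr)\,\pi_{\G}(S).
\end{equation*}

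A second application of Claim~\ref{claim:tail}, this time in its conditional form given $\Lambda$, then yields part~(i):
\begin{equation*}
\pi^{\xi(\G)}(A_M) \;=\; \sum_{\Lambda \in S_\star}\pi^{\xi(\G)}_{W_h}(\Lambda)\,\pi^{\xi(\G)}\bigl(\mathcal R^{\xi(\G)}(M)\mid \Lambda\bigr) \;\ge\; \bigl(1-e^{-\Omega(M)}\bigr)\pi^{\xi(\G)}_{W_h}(S_\star),
\end{equation*}
and then the marginal bound above gives the required lower bound $q^{-M}(1-e^{-\Omega(M)})\pi_{\G}(S_\star)$. For part~(ii), I use the inclusion $A_M^c \supseteq \{\omega(E(W_h)) \in S_\star^c\}$ together with $\pi_{\G}(S_\star^c) \ge 1/2$ from~\eqref{eq:s-star-conductance:sketch}, so the same marginal comparison yields $\pi^{\xi(\G)}(A_M^c) \ge \tfrac{1}{2}\,q^{-M}(1-e^{-\Omega(M)}) = e^{-O(M)}$ since $q$ is a constant. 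The main technical obstacle is the decomposition $c^{\xi(\G)}(\Lambda \cup \omega) = c^{\xi(\omega)}(\Lambda') + C(\omega)$: one needs to distinguish carefully between bulk paths that merge distinct $\xi(\G)$-classes (which modify $\xi(\omega)$ and influence the $\Lambda$-dependent component count) and those that do not (which only contribute to $C(\omega)$). Once this bookkeeping is settled, the remainder reduces to standard random-cluster monotonicity combined with the tail estimate of Claim~\ref{claim:tail}.
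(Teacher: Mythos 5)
Your proposal is correct and follows essentially the same route as the paper's own proof: condition on the bulk configuration, note that on $\mathcal{R}^{\xi(\G)}(M)$ the induced boundary condition differs from $\xi(\G)$ by at most $M$ wirings so that the conditional law on $E(W_h)$ is within a factor $q^{-M}$ of $\pi_{\G}$, and invoke Claim~\ref{claim:tail} for the bulk event. The paper compresses this comparison into a single line (and proves (ii) by intersecting with the event $\mathcal{R}^{\xi(\G)}(M)$ rather than via your inclusion $A_M^c \supseteq \{\omega(E(W_h)) \in S_\star^c\}$), while you make the conditional identity and the component-count bookkeeping explicit; the substance is the same.
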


	With this bounds on hand, we can complete the ``lifting'' procedure.
	Let $\PMHB$ be the transition matrix for the MHB dynamics and for ease of notation set $L = E(W_h)$, $B = E(V(T_h)\setminus W_h)$, $\Omega = \Omega_{T_h}$ and $\xi = \xi(\G)$. We have 
	\begin{align}
	\QMHB(A_M,A_M^c)  &=  \sum_{\omega \in A_M} \sum_{\omega' \in A_M^c} \pi^\xi(\omega) \PMHB(\omega, \omega') \notag\\
	& \leq \sum_{\omega \in A_M} \sum_{\substack{\omega' \in \Omega: \\ \omega'(B) \notin \mathcal{R}^\xi(M)}} \pi^\xi(\omega) \PMHB(\omega, \omega') + \sum_{\omega \in A_M} \sum_{\substack{\omega' \in \Omega: \\ \omega'(L) \notin S_\star}} \pi^\xi(\omega) \PMHB(\omega, \omega')\,. \label{eq:edge-measure-decomp}
	\end{align}
	For the first term in (\ref{eq:edge-measure-decomp}), observe by definition of MHB dynamics, for every $\omega \in A_M$ and $M = \delta |V_G|$, for constant $\delta >0$ such that $M \ge 2|E_G|\p^{\ell-1}$,
	$$
	\sum_{\substack{\omega' \in \Omega: \\ \omega'(B) \notin \mathcal{R}^\xi(M)}} \PMHB(\omega,\omega') \le \sum_{\substack{\omega' \in \Omega: \\ \omega'(B) \notin \mathcal{R}^\xi(M)}} \pi^\xi\big(\omega'(B) \mid \omega(L)\big) \le {e}^{-\Omega(M)},
	$$
	where the last inequality follows from Claim~\ref{claim:tail}.
	Hence,
	$$
	\sum_{\omega \in A_M} \sum_{\substack{\omega' \in \Omega: \\ \omega'(B) \notin \mathcal{R}^\xi(M)}} \pi ^\xi(\omega) \PMHB(\omega, \omega') \le \pi^{\xi}(A_M)  {e}^{-\Omega(M)}.
	$$
	
	For the second term in (\ref{eq:edge-measure-decomp}), 
	observe that $\omega \neq \omega'$
	and that $\omega$ and $\omega'$ can differ in at most one gadget $A_i$ (two edges); otherwise  $\PMHB(\omega, \omega') = 0$. 
	With the assumption that $\p \le 1/2$, it follows that 
	$$\min_{\omega(L),\omega'(L)}{\MC}(\omega(L), \omega'(L)) \ge \frac{\p^2}{q^2|E_G|}.$$
	Hence,
	\begin{align*}
	\PMHB(\omega, \omega') &\le \frac{1}{|I_h|} \le  \frac{q^2|E_G|}{\p^2|I_h|} {\MC}(\omega (L), \omega' (L))\,.
	\end{align*}
	
	Then, since $|E_G|\leq |I_h|$  
	\begin{align*}
	\sum_{\omega \in A_M} \sum_{\substack{\omega' \in \Omega: \\ \omega'(L) \notin S_\star}} \pi^\xi (\omega) \PMHB(\omega, \omega') 
	&\le 	\frac{q^2}{\p^2}\sum_{\omega \in A_M} \sum_{\substack{\omega' \in \Omega: \\ \omega'(L) \notin S_\star}} \pi^\xi (\omega) \MC(\omega (L), \omega' (L)) \notag\\
	&\le \frac{q^2}{\p^2}\sum_{\theta \in \mathcal{R}^\xi(M)} \pi^\xi(\theta) \sum_{\omega_1  \in S_\star}\sum_{\omega_2\not\in S_\star} \pi^\xi(\omega_1 \mid \theta) \MC(\omega_1, \omega_2) \\ 
	& \le \frac{q^2}{\p^2} \pi^\xi(\mathcal{R}^\xi(M)) \sum_{\omega_1  \in S_\star}\sum_{\omega_2\not\in S_\star} \max_{\theta \in \mathcal{R}^\xi(M)} \pi^\xi(\omega_1 \mid \theta) \MC(\omega_1, \omega_2).
	\end{align*}
	
	Since there are at most $M$ wirings for any $\theta \in \mathcal{R}^\xi(M)$, we have the following standard inequality between random-cluster measures:	
	$$\max_{\theta \in \mathcal{R}^\xi(M)} \pi^\xi(\omega_1 \mid \theta) \le q^{2M} \pi_{\G}(\omega_1);$$
	see Lemma 2.2 in~\cite{BGV}. Therefore,
	\begin{align*}
	\sum_{\omega \in A_M} \sum_{\substack{\omega' \in \Omega: \\ \omega'(L) \notin S_\star}} \pi^\xi(\omega) \PMHB(\omega, \omega') 
	&\le \frac{q^{2M+2}}{\p^2}\sum_{\omega_1  \in S_\star}\sum_{\omega_2\not\in S_\star} \pi_{\G}(\omega_1) \MC(\omega_1, \omega_2) =\frac{q^{2M+2}}{\p^2}Q_{\MC}(S_\star,S_\star^c).
	\end{align*}

	Combining these two bounds, we get
	\begin{equation*}
	\QMHB(A_M, A_M^c) \le \pi^{\xi}(A_M)  {e}^{-\Omega(M)} + \frac{q^{2M+2}}{\p^2} Q_{\MC}(S_\star,S_\star^c).
	\end{equation*}
	
	Dividing by $\pi^{\xi}(A_M)$ and using the bounds from Claim \ref{claim:am-prob-facts:sketch}, we see that
	\begin{align*}
	\frac {\QMHB(A_M, A_M^c)}{\pi^{\xi}(A_M)} 
	\le {e}^{-\Omega(M )} + \frac{2q^{3M+2}}{\p^2} \frac{Q_{\M}(S_\star,S_\star^c)}{ \pi_{\G}(S_\star)} 
	\le  {e}^{-\Omega(M)}  + {e}^{O(M)}{e}^{-\Omega(|V_G|)}\,.
	\end{align*}
	for sufficiently large $M$, where the last inequality follows from (\ref{eq:s-star-conductance:sketch}) and the facts that $M = \delta |V_G|$ and $\p \ge \frac{1}{e^{\gamma|V_G|}}$ for a sufficiently large constant $\gamma > 0$. Similarly, we get
	\begin{align*}
	\frac {\QMHB(A_M, A_M^c)}{\pi^{\xi(G)}(A_M^c)} \le {e}^{-\Omega(M)}  + {e}^{O(M)}{e}^{-\Omega(|V_G|)}\,.
	\end{align*}
	Then, since $M = \delta |V_G|$, for some $\delta>0$ sufficiently small we obtain~\eqref{eq:lb:conductance}. 
\end{proof}

\begin{proof}[Proof of  Claim \ref{claim:tail}]
	Let $\mu = \pi_{T_h,\p,q}^{\eta} \big(\cdot \mid \Lambda \big)$.
	We have $\mu \prec \nu_{\p}$, where $\nu_{\p}$ is the independent percolation measure on $V(T_h)\setminus W_h$.
	
	Observe that for $c_i$ to be connected to any other $c_j$ through the bulk of $T_h$ there most be
	a path from $c_i$ to the root of the subtree $B_i$. 
	Since the edges of the events $\{c_i \leftrightarrow \textrm{root}(B_i)\}$ do not intersect for distinct $i$, these increasing events are independent under $\nu_{\p}$ and so for $X \sim Bin(m,r)$ with $r=\p^{\ell-1}$
	$$
	\mu(\mathcal{R}^{\eta}(M)^c) \le \Pr[X \ge  M].
	$$
	By a Chernoff bound
	$$
	\Pr[X \ge E[X]+\delta m] \le \exp\left(-m D(r+\delta || r)\right)
	$$
	where 
	$$
	D(r+\delta || r) = (r+\delta) \ln (\frac{r+\delta}{r}) + (1-r-\delta) \ln(\frac{1-r-\delta}{1-r}). 
	$$
	Setting $M = (r+\delta)m$, and using the fact that $m \ge M \ge 2rm$, it can be checked that $D(r+\delta || r) \ge (r+\delta)/20$ and the result follows.
\end{proof}

\begin{proof}[Proof of  Claim \ref{claim:am-prob-facts:sketch}]
	Recall that we set $L = E(W_h)$, $B = E(V(T_h)\setminus W_h)$ and $\xi = \xi(\G)$.
	For part (i), observe that if $\omega$ is sampled from $\pi^\xi$, then 
	\begin{align*}
	\pi^\xi(A_M) &  = \pi^\xi \big(\omega (L) \in S_\star \mid  \omega(B) \in \mathcal{R}^\xi(M)\big)  \pi^\xi \big(\omega(B) \in \mathcal{R}^\xi(M)\big)\,. 
	\end{align*}
	By Claim \ref{claim:tail},
	$$
	\pi^{\xi}(\omega(B) \in \mathcal{R}^\xi(M)) \ge 1-{e}^{-\Omega (M)}\,.
	$$
	Moreover, since 
	$$
	\pi^\xi(\omega (L) \in S_\star \mid  \omega(B) = 0) =  \pi_{\G}(S_\star)\,,
	$$
	it follows that
	$$
	\pi^{\xi}(\omega (L) \in S_\star \mid  \omega(B) \in \mathcal{R}^\xi(M)) \ge q^{-M} {\pi_{\G}(S_\star)}\,,
	$$
	and thus, 
	$$\pi^\xi(A_M)  \geq q^{-M}({1-{e}^{-\Omega (M)}}) \pi_{\G}(S_\star).$$ 
	Similarly for part (ii), we have 
	\begin{align*}
	\pi^\xi(A_M^c) 
	&\ge  \pi^\xi \big(\omega (L) \not\in S_\star \mid  \omega(B) \in \mathcal{R}^\xi(M)\big) 	\pi^\xi(\omega(B) \in \mathcal{R}^\xi(M)) \\
	&\ge  q^{-M} (1-{e}^{-\Omega(M)}) {\pi_{\G}(S_\star^c)}
	\end{align*}
	which is at least $e^{-O(M)}$ since $\pi_{\G}(S_\star) \leq \frac 12$.
\end{proof}


\begin{thebibliography}{10}
	
	
	\bibitem[AS87]{AScc}
	B.~Awerbuch and Y.~Shiloach. 
	New connectivity and MSF algorithms for shuffle-exchange network and PRAM. 
	\newblock {\em IEEE Computer Architecture Letters}, 36(10):1258--1263, 1987.
	
	
	
	\bibitem[AL20]{AL}
	V. L. Alev and L. C. Lau. 
	Improved Analysis of Higher Order Random Walks and Applications. 
	In {\em Proceedings of the 61st Annual IEEE Symposium on Foundations of Computer Science (FOCS)}, 2020.
	
	\bibitem[ALO20]{ALO}
	N. Anari, K. Liu, and S. Oveis Gharan. 
	Spectral Independence in High-Dimensional Expanders and Applications to the Hardcore Model. 
	In {\em Proceedings of the 52nd Annual ACM Symposium on Theory of Computing (STOC)}, 2020.
	
	
	\bibitem[BD12]{BDC}
	V.~Beffara and H.~Duminil-Copin.
	\newblock The self-dual point of the two-dimensional random-cluster model is
	critical for $q \ge 1$.
	\newblock {\em Probability Theory and Related Fields}, 153:511--542, 2012.
	
	\bibitem[BKMP05]{BKMP}
	N.~Berger, C.~Kenyon, E.~Mossel and Y.~Peres.
	\newblock Glauber dynamics on trees and hyperbolic graphs.
	\newblock {\em Probability Theory and Related Fields}, 131(3):311--340, 2005.
	
	\bibitem[Bet35]{Bethe}
	H. A. Bethe. Statistical theory of superlattices. 
	{\em Proceedings of the Royal Society of London. Series A, Mathematical and Physical Sciences}, 150(871):552--575, 1935.
	
	\bibitem[Bla+20b]{BCPSVnew}
	A.~Blanca, P.~Caputo, D.~Parisi, A.~Sinclair, and E.~Vigoda.
	Entropy decay in the {S}wendsen-{W}ang dynamics.  Preprint, 2020.
	Available from arXiv at: \href{https://arxiv.org/abs/2007.06931}{arXiv:2007.06931 (v1)}. 
	
	\bibitem[Bla+20c]{BCPSVstoc}
	A.~Blanca, P.~Caputo, D.~Parisi, A.~Sinclair, and E.~Vigoda.
	Entropy decay in the {S}wendsen-{W}ang dynamics on $\Z^d$.  
	In {\em Proceedings of the 53st Annual ACM Symposium on Theory of Computing (STOC)}. To appear, 2021.
	
	
	
	\bibitem[BCSV18]{BCSV}
	A.~Blanca, P.~Caputo, A.~Sinclair, and E.~Vigoda.
	\newblock {S}patial {M}ixing and {N}on-local {M}arkov chains.
	\newblock In {\em Proceedings of the 29th Annual ACM-SIAM Symposium on Discrete
		Algorithms (SODA)}, pages 1965--1980, 2018.
	

	
	\bibitem[Bla+20a]{BGGSVY}
	A.~Blanca, A.~Galanis, L.A. Goldberg, D.~\v{S}tefankovi\v{c}, E.~Vigoda, and K.~Yang.
	\newblock Sampling in Uniqueness from the Potts and Random-Cluster Models on Random Regular Graphs.
	\newblock {\em SIAM Journal on Discrete Mathematics}, 34(1):742--793, 2020.

	
	\bibitem[BGV20]{BGV}
	A.~Blanca, R.~Gheissari, and E.~Vigoda.
	\newblock Random-Cluster Dynamics in $\Z^2$: Rapid Mixing with General Boundary Conditions.
	\newblock {\em Annals of Applied Probability}, 30(1):418 -- 459, 2020.
	
	\bibitem[BS15]{BSmf}
	A.~Blanca and A.~Sinclair.
	\newblock Dynamics for the mean-field random-cluster model.
	\newblock {\em Proceedings of the 19th International Workshop on Randomization
		and Computation}, pages 528--543, 2015.
	

	\bibitem[BGP16]{BGP}
	M.~Bordewich, C.~Greenhill and V.~Patel.
	\newblock Mixing of the Glauber dynamics for the ferromagnetic Potts model.
	\newblock {\em Random Structures \& Algorithms}, 48(1):21--52, 2016.
	
	\bibitem[BCT12]{BCT}
	C.~Borgs, J.~Chayes, and P.~Tetali.
	\newblock {S}wendsen-{W}ang algorithm at the {P}otts transition point.
	\newblock {\em Probability Theory and Related Fields}, 152:509--557, 2012.
	
	\bibitem[Bor+99]{BCFKTVV}
	C.~Borgs, J.T. Chayes, A.M. Frieze, J.H. Kim, P.~Tetali, E.~Vigoda, and V.~Vu.
	\newblock Torpid mixing of some {M}onte {C}arlo {M}arkov chain algorithms in
	statistical physics.
	\newblock In {\em Proceedings of the 40th Annual IEEE Symposium on Foundations
		of Computer Science (FOCS)}, pages 218--229, 1999.
	
	\bibitem[CP20]{CP}
	P.~Caputo and D.~Parisi.
	\newblock Block factorization of the relative entropy via spatial mixing.
	\newblock {\em \href{https://arxiv.org/abs/2004.10574}{arXiv preprint arXiv:2004.10574}}, 2020.
	
	
	
	\bibitem[Ces01]{Cesi}
	F. Cesi.
	Quasi--factorization of the entropy and logarithmic Sobolev inequalities for Gibbs random fields.
	{\em Probability Theory and Related Fields}, 120(4):569--584, 2001.
	
		
	\bibitem[CLV20+]{CLV20}
	Z.~Chen, K.~Liu, and E.~Vigoda.
	\newblock {Optimal Mixing of Glauber Dynamics: Entropy Factorization via High-Dimensional Expansion.}
	\newblock {arXiv preprint arXiv:2011.02075, 2020}
	
	\bibitem[CF99]{CF}
	C.~Cooper and A.M. Frieze.
	\newblock Mixing properties of the {S}wendsen-{W}ang process on classes of
	graphs.
	\newblock {\em Random Structures and Algorithms}, 15(3-4):242--261, 1999.
	
	\bibitem[CET05]{CET}
	M.~Costeniuc, R.S. Ellis, and H.~Touchette.
	\newblock Complete analysis of phase transitions and ensemble equivalence for
	the {C}urie--{W}eiss--{P}otts model.
	\newblock {\em Journal of Mathematical Physics}, 46(6):063301, 2005.
	
	\bibitem[Cuf+12]{CDLLPS}
	P.~Cuff, J.~Ding, O.~Louidor, E.~Lubetzky, Y.~Peres, and A.~Sly.
	\newblock Glauber dynamics for the mean-field {P}otts model.
	\newblock {\em Journal of Statistical Physics}, 149(3):432--477, 2012.
	
	\bibitem[Dum+16]{DGHMT}
	H.~Duminil-Copin, M.~Gagnebin, M.~Harel, I.~Manolescu, and V.~Tassion.
	\newblock Discontinuity of the phase transition for the planar random-cluster
	and {P}otts models with $q>4$.
	\newblock {\em arXiv preprint arXiv:1611.09877}, 2016.
	
	\bibitem[DST17]{DCST}
	H.~Duminil-Copin, V.~Sidoravicius, and V.~Tassion.
	\newblock Continuity of the {P}hase {T}ransition for {P}lanar
	{R}andom-{C}luster and {P}otts {M}odels with $1\!\le\!q\!\le\! 4$.
	\newblock {\em Communications in Mathematical Physics}, 349(1):47--107, 2017.
	
	\bibitem[DS96]{DSC}
	P.~Diaconis and L.~Saloff-Coste.
	\newblock Logarithmic {S}obolev inequalities for finite {M}arkov chains.
	\newblock {\em The Annals of Applied Probability}, 6(3):695--750, 1996.	
	
	\bibitem[DSVW04]{DSVW}
	M. Dyer, A. Sinclair, E. Vigoda and D. Weitz.
	Mixing in time and space for lattice spin systems: A combinatorial view.
	{\em Random Struct. Algorithms}, 24(4): 461-479, 2004.
	
	\bibitem[ES88]{ES}
	R.G. Edwards and A.D. Sokal.
	\newblock {Generalization of the Fortuin-Kasteleyn-Swendsen-Wang representation
		and Monte Carlo algorithm}.
	\newblock {\em Physical Review D}, 38(6):2009--2012, 1988.
	
	\bibitem[GSV15]{GSV}
	A.~Galanis, D.~\v{S}tefankovi\v{c}, and E.~Vigoda.
	\newblock {S}wendsen-{W}ang algorithm on the mean-field {P}otts model.
	\newblock {\em Proceedings of the 19th International Workshop on Randomization
		and Computation}, pages 815--828, 2015.
	
	
	\bibitem[GSVY16]{GSVY}
	A.~Galanis, D.~\v{S}tefankovi\v{c}, E.~Vigoda, and L.~Yang.
	\newblock Ferromagnetic {P}otts model: Refined \#BIS-hardness and related
	results.
	\newblock {\em SIAM Journal on Computing}, 45(6):2004--2065, 2016.
	
	\bibitem[Geo88]{Georgii}
	H. O. Georgii. 
	{\em Gibbs Measures and Phase Transitions.} 
	De Gruyter Studies in Mathematics. Walter de Gruyter Inc., 1988.
	
	\bibitem[GM07]{GM}
	A.~Gerschenfeld and A.~Montanari.
	\newblock Reconstruction for models on random graphs.
	\newblock In {\em Proceedings of the 48th Annual IEEE Symposium on Foundations
		of Computer Science (FOCS)}, pages 194--204, 2007.
	
	
	\bibitem[GLP18]{GLP}
	R.~Gheissari, E.~Lubetzky, and Y.~Peres.
	\newblock Exponentially slow mixing in the mean-field {S}wendsen-{W}ang
	dynamics.
	\newblock In {\em Proceedings of the 29th Annual ACM-SIAM Symposium on Discrete
		Algorithms (SODA)}, pages 1981--1988, 2018.
	
	\bibitem[Gil98]{Gillman}
	D. Gillman.
	\newblock A Chernoff bound for random walks on expander graphs.
	\newblock {\em SIAM Journal on Computing}, 27(4):1203--1220, 1998.
	
	\bibitem[GJ99]{GJ}
	V.K. Gore and M.R. Jerrum.
	\newblock The {S}wendsen-{W}ang process does not always mix rapidly.
	\newblock {\em Journal of Statistical Physics}, 97(1-2):67--86, 1999.
	
	\bibitem[GJ17]{GuoJ}
	H.~Guo and M.~Jerrum.
	\newblock Random cluster dynamics for the {I}sing model is rapidly mixing.
	\newblock In {\em Proceedings of the 28th Annual ACM-SIAM Symposium on
		Discrete Algorithms (SODA)}, pages 1818--1827, 2017.
	
	
	\bibitem[Hag96]{Haggstrom}
	O. H\"{a}ggstr\"{o}m. 
	The random-cluster model on a homogeneous tree.
	{\em Probability Theory and Related Fields}, 104:231--253, 1996.
	
	\bibitem[HS05]{HS}
	T.~P.~Hayes and A. Sinclair. 
	A general lower bound for mixing of single-site dynamics on graphs. 
	In {\em Proceedings of the 46th Annual IEEE Symposium on Foundations of Computer Science (FOCS)}, pages 511--520, 2005.
	
	
	\bibitem[Hub03]{Huber}
	M.~Huber.
	\newblock A bounding chain for {S}wendsen-{W}ang.
	\newblock {\em Random Structures \& Algorithms}, 22(1):43--59, 2003.
	
	\bibitem[Jan17]{Janson}
	S.~Janson.
	\newblock Tail bounds for sums of geometric and exponential variables.
	\newblock {\em Statistics \& Probability Letters}, 135:1--6, 2018.
	
	\bibitem[Jer03]{JerrumBook}
	M. Jerrum.
	\newblock {\em Counting, sampling and integrating: algorithms and complexity}.
	\newblock Lectures in Mathematics, Birkh\"{a}user Verlag, 2003.
	
	\bibitem[JSV04]{JSV} M. Jerrum, A. Sinclair, and E. Vigoda.
	A polynomial-time approximation algorithm for the permanent of a matrix with
	non-negative entries.
	{\em Journal of the ACM}, 51(4):671-697, 2004.
	
	\bibitem[Jon99]{J}
	J.~Jonasson.
	\newblock The random cluster model on a general graph and a phase transition characterization of nonamenability.
	\newblock {\em Stochastic Processes and their Applications}, 79(2):335--354, 1999.
	
	\bibitem[KLS97]{KLS}
	R.~Kannan, L.~Lov{\'a}sz, and M.~Simonovits.
	\newblock Random walks and an ${O}^*(n^5)$ volume algorithm for convex bodies.
	\newblock {\em Random structures and algorithms}, 11(1):1--50, 1997.
	
	\bibitem[Kol18]{Kolmogorov} V. Kolmogorov.
	A faster approximation algorithm for the {G}ibbs partition function.
	In {\em Proceedings of the Conference On Learning Theory} (COLT), pages 228--249, 2018.
	
	\bibitem[LPW08]{LPW}
	D.A. Levin, Y.~Peres, and E.L. Wilmer.
	\newblock {\em Markov Chains and Mixing Times}.
	\newblock American Mathematical Society, 2008.
	
	
	\bibitem[LMST13]{LMST}
	E. Lubetzky, F. Martinelli, A. Sly, and F. L. Toninelli. 
	Quasi-polynomial mixing of the 2D stochastic Ising model with ``plus'' 
	boundary up to criticality.
	{\em J. Eur. Math. Soc.}, 15:339--386, 2013.
	
	\bibitem[Mar99]{Martinelli-notes}
	F.  Martinelli.  
	Lectures on Glauber Dynamics for Discrete Spin Models.
	{\em Lecture Notes in Mathematics}, vol. 1717.  Springer, 1999.
	
	\bibitem[MT10]{MT}
	F. Martinelli and F. L. Toninelli. 
	On the mixing time of the 2D stochastic Ising model with ``plus'' boundary conditions at low temperature.
	{\em Communications in Mathematical Physics}, 296(1):175--213, 2010.
	
	\bibitem[MO94]{MO}
	F. Martinelli and E. Olivieri. 
	Approach to equilibrium of Glauber dynamics in the one phase region.
	{\em Communications in Mathematical Physics}, 161(3):447--486, 1994.
	
	\bibitem[MOS94]{MOS}
	F.~Martinelli, E.~Olivieri, and R.H. Schonmann.
	\newblock For 2-d lattice spin systems weak mixing implies strong mixing.
	\newblock {\em Communications in Mathematical Physics}, 165(1):33--47, 1994.
	
	
	\bibitem[MSW03]{MSW04}
	F.~Martinelli, A.~Sinclair, and D.~Weitz.
	\newblock The {I}sing model on trees: {B}oundary conditions and mixing time.
	\newblock In {\em Proceedings of the 44th Annual IEEE Symposium on Foundations
		of Computer Science (FOCS)}, pages 628--639, 2003.
	
	\bibitem[MSW04]{MSW-Potts}
	F.~Martinelli, A.~Sinclair, and D.~Weitz.
	\newblock Fast mixing for independent sets, colorings and other model on trees.
	\newblock In {\em Proceedings of the 15th Annual ACM-SIAM Symposium on Discrete
		Algorithms (SODA)}, pages 449--458, 2004.
	
	\bibitem[MP03]{MP}
	E. Mossel and Y. Peres.
	Information flow on trees,
	{\em Annals of Applied Probability}, 13(3):817-844, 2003.
	
	\bibitem[MS13]{MS}
	E.~Mossel and A.~Sly.
	\newblock Exact thresholds for {I}sing--{G}ibbs samplers on general graphs.
	\newblock {\em The Annals of Probability}, 41(1):294--328, 2013.
	
	\bibitem[Res+14]{RSVVY}
	R. Restrepo, D. \v{S}tefankovi\v{c}, J. C. Vera, E. Vigoda, and L. Yang.
	Phase Transition for Glauber Dynamics for Independent Sets on Regular Trees.
	{\em SIAM Journal on Discrete Mathematics}, 28(2):835-861, 2014.   
	
	\bibitem[Sal97]{SC}
	L. ~Saloff-Coste.  
	Lectures on finite Markov chains.
	{\em Lectures on probability theory and statistics}, 301--413.  Springer, 1997.
	
	
	\bibitem[Sly09]{SlyRecons}
	A.~Sly. 
	Reconstruction for the Potts model. 
	 {\em The Annals of Probability}, 39(4):1365--1406, 2011.
	
	
	\bibitem[SZ17]{SlyZhang}
	A. Sly and Y. Zhang.
	The Glauber dynamics of colorings on trees is rapidly mixing throughout the nonreconstruction regime.
	{\em The Annals of Applied Probability}, 27(5):2646--2674, 2017.
	
	
	\bibitem[SVV09]{SVV}
	D. \v{S}tefankovi\v{c}, S. Vempala, and E. Vigoda.
	Adaptive Simulated Annealing: A Near-optimal Connection between Sampling and Counting.
	{\em Journal of the ACM}, 56(3):1-36, 2009.
		
	\bibitem[SW87]{SW}
	R.H. Swendsen and J.S. Wang.
	\newblock Nonuniversal critical dynamics in {M}onte {C}arlo simulations.
	\newblock {\em Physical Review Letters}, 58:86--88, 1987.
	
	\bibitem[Ull14]{Ullrich}
	M.~Ullrich.
	\newblock Rapid mixing of {S}wendsen-{W}ang and single-bond dynamics in two
	dimensions.
	\newblock {\em Dissertationes Mathematicae}, 502:64, 2014.
	

\end{thebibliography}
\end{document}